\documentclass[10pt,reqno]{amsart}

\usepackage{pdfpages}
\usepackage{pdflscape} % for 'landscape' environment

\usepackage{booktabs}
\usepackage{amssymb}
\usepackage{graphicx}
\usepackage{epstopdf}
\usepackage[hyphens]{url}
\usepackage[square,numbers,sort&compress]{natbib}

\usepackage{multirow}
\def\block(#1,#2)#3{\multicolumn{#2}{c}{\multirow{#1}{*}{$ #3 $}}}

\usepackage{tikz}
\definecolor{mediumspringgreen}{rgb}{0.0, 0.98039215, 0.60392156}

\usepackage{pgfplots} %% This Clashes with other things

\usetikzlibrary{positioning}

\def\visible<#1>{}  % beamer command not needed here

\usepackage[utf8]{inputenc}

\usepackage[english]{babel}
\usepackage{amsfonts}
\usepackage{amsmath}
\usepackage{latexsym}
\usepackage{color}
\usepackage{subfigure}
\usepackage{enumerate}

\usepackage{hyperref}

\usepackage{ifpdf}
\usepackage{listings}
\lstset{
  %basicstyle=\tiny\ttfamily,
  basicstyle=\tiny\sffamily,
  breaklines=true,
  breakatwhitespace=false,
  columns=fullflexible,
  breakindent=1em
}

\newcommand\ifpdf
\input{[.pdftex_t}% \input{#1.pdf_t}
\else
\input{[.pstex_t}
\fi1]{\ifpdf
\input{#1.pdftex_t}% \input{#1.pdf_t}
\else
\input{#1.pstex_t}
\fi}

%Math Operators

  % Grp of invertible affine linear transformations

\newcommand{\vect}[1]{\left(\begin{array}{@{}c@{}}#1\end{array}\right)}

%\newcommand{\mod}{\mathrm{mod}\;}

% to blank out text:
\newcommand{\old}[1]{{}}

%Command Shortcuts

\newcommand{\bb}{\mathbb}

%Mathbb
\newcommand{\R}{\bb R}

\newcommand{\N}{\bb N}

%\newcommand{\T}{\mathcal T}
%\renewcommand{\r}{\bar{r}}
%\newcommand{\p}{\bar{p}}

%\newcommand{\verts}{\mathrm{vert}}
%\renewcommand{\intr}{\mathrm{int}}

%Constructed Commands
\newcommand{\floor}[1]{\lfloor#1\rfloor}

\newcommand\st{\mid}

%% Vectors
\def\ve#1{\mathchoice{\mbox{\boldmath$\displaystyle\bf#1$}}
{\mbox{\boldmath$\textstyle\bf#1$}}
{\mbox{\boldmath$\scriptstyle\bf#1$}}
{\mbox{\boldmath$\scriptscriptstyle\bf#1$}}}

%% %%% Proofs with qeds

%% \let\proofqed=\qed
%% \newcommand\qedhere{\qed\global\let\proofqed=\relax}

%% \let\saveproof=\proof
%% \def\proof{\saveproof\global\let\proofqed=\qed}
%% \let\saveendproof=\endproof
%% \def\endproof{\proofqed\saveendproof}

%%%%%%%%%%%%%%%%%%%%%%%%%

%          SPECIFIC COMMANDS FOR THIS PAPER        %

%%%%%%%%%%%%%%%%%%%%%%%%%

%%%%%%%%  Fancy Letters %%%%%%%%%%%%%

\newcommand{\Rr}{\mathbb{R}}

\newcommand{\cf}{\mathfrak{c}}

%%

%% Bold face letters

\newcommand{\x}{{\ve x}}

\newcommand{\w}{{\ve w}}
\renewcommand{\u}{{\ve u}}
\renewcommand{\v}{{\ve v}}

\newenvironment{psmallmatrixbig}{\bigl(\smallmatrix}{\endsmallmatrix\bigr)}
\newcommand\InlineFrac[2]{#1/#2}  % only works for simple arguments
\newcommand\ColVec[3][\relax]% Optional argument is denominator.
{
  \ifx#1\relax
  % no denominator
  \bgroup\let\frac=\InlineFrac\begin{psmallmatrixbig}#2\vphantom{/}\\#3\vphantom{/}\end{psmallmatrixbig}\egroup
  \else
  %\frac1{#1}\bgroup\let\frac=\InlineFrac\begin{psmallmatrix}#2\\#3\end{psmallmatrix}\egroup
  \bgroup\let\frac=\InlineFrac\begin{psmallmatrixbig}\ifx#200\else#2/#1\fi\\\ifx#300\else#3/#1\fi\end{psmallmatrixbig}\egroup
  \fi
}

%Theorem Environments
\newtheorem{theorem}{Theorem}[section]

%% create new theorem numbered just as Theorem, but which works with autoref.
\makeatletter
\newcommand\MkNewTheorem[2]{%
  \newtheorem{#1}{#2}[section]
  \expandafter\def\csname c@#1\endcsname{\c@theorem}
  \expandafter\def\csname p@#1\endcsname{\p@theorem}
  \expandafter\def\csname the#1\endcsname{\thetheorem}
  \expandafter\def\csname #1name\endcsname{#2}
}

\MkNewTheorem{corollary}{Corollary}
\MkNewTheorem{lemma}{Lemma}
\MkNewTheorem{proposition}{Proposition}
\MkNewTheorem{prop}{Proposition}
\MkNewTheorem{claim}{Claim}
\MkNewTheorem{observation}{Observation}
\MkNewTheorem{obs}{Observation}
\MkNewTheorem{conjecture}{Conjecture}
\MkNewTheorem{openquestion}{Open question}
\MkNewTheorem{question}{Question}

\theoremstyle{definition}
\MkNewTheorem{example}{Example}
\MkNewTheorem{exercise}{Exercise}
\MkNewTheorem{notation}{Notation}
\MkNewTheorem{assumption}{Assumption}
\MkNewTheorem{definition}{Definition}
\MkNewTheorem{remark}{Remark}
\MkNewTheorem{goal}{Goal}
\MkNewTheorem{problem}{Problem}

% \newtheorem{obs}[theorem]{Observation}
% \newtheorem{prop}[theorem]{Proposition}
% \newtheorem{assumption}{Assumption}

%%% We want boldmath everywhere bold appears
\makeatletter
\let\OurMathBbAux=\mathbb
\DeclareRobustCommand\OurMathBb{\OurMathBbAux}
\let\mathbb=\OurMathBb
\let\bfseries=\undefined
\DeclareRobustCommand\bfseries
{\not@math@alphabet\bfseries\mathbf
  \boldmath\fontseries\bfdefault\selectfont\let\OurMathBbAux=\mathbf}
\def\@thm#1#2#3{%
  \ifhmode\unskip\unskip\par\fi
  \normalfont
  \trivlist
  \let\thmheadnl\relax
  \let\thm@swap\@gobble
  \thm@notefont{\fontseries\mddefault\upshape\unboldmath}%   %%% <--- Added \unboldmath
  \thm@headpunct{.}% add period after heading
  \thm@headsep 5\p@ plus\p@ minus\p@\relax
  \thm@space@setup
  #1% style overrides
  \@topsep \thm@preskip               % used by thm head
  \@topsepadd \thm@postskip           % used by \@endparenv
  \def\@tempa{#2}\ifx\@empty\@tempa
    \def\@tempa{\@oparg{\@begintheorem{#3}{}}[]}%
  \else
    \refstepcounter{#2}%
    \def\@tempa{\@oparg{\@begintheorem{#3}{\csname the#2\endcsname}}[]}%
  \fi
  \@tempa
}
\makeatother

%%% Redefine \pmod so that it respects \textstyle.
\makeatletter
\renewcommand{\pod}[1]%% a helper command used in \pmod.
{\allowbreak\mathchoice{\mkern18mu}{\mkern8mu}{\mkern8mu}{\mkern8mu}(#1)}
\makeatother

%% We certainly do not want to distinguish the two:
\let\epsilon=\varepsilon

%\chardef\Myunderscore=`\_
\let\Myunderscore=\textunderscore   %%%% Better with textsf
% from http://texwelt.de/wissen/fragen/565/was-heit-hyperrefs-warnung-token-not-allowed-in-a-pdf-string
\pdfstringdefDisableCommands{%
  \def\Myunderscore{\textunderscore}%
}
\newcommand\underscore{\Myunderscore\allowbreak}
\let\_=\underscore

%\newcommand\githubsearchurl{https://github.com/yuan-zhou/solid-angle-code/search}
%\input{sage-commands}

%\DeclareRobustCommand\sage[1]{\texttt{#1}}

\title{Solid angle measure of polyhedral cones} %Computing ... via decomposition and power series}

\thanks{The authors gratefully acknowledge partial support from the National Science
  Foundation through grant DMS-2012429, awarded to Y.~Zhou.}

\author{Allison Fitisone}
\address{Allison Fitisone: Dept.\ of Mathematics, University of Kentucky}
\email{allison.fitisone@uky.edu }

\author{Yuan Zhou}
\address{Yuan Zhou: Dept.\ of Mathematics, University of Kentucky}
\email{yuan.zhou@uky.edu}
%% 719 Patterson Office Tower
%% Lexington, Kentucky 40506-0027

%\date

\usepackage[normalem]{ulem}

%\graphicspath{{../reu-2013/}}
\newcommand\Figure[2][\relax]{%
  \begin{figure}[h!]
    \includegraphics[width=.8\textwidth]{#2}
    \caption{\ifx#1\relax#2\else#1\fi}
  \end{figure}
}
\newcommand*\elide{\textup{[\,\dots]}}

\begin{document}
 \newcommand{\tgreen}[1]{\textsf{\textcolor {ForestGreen} {#1}}}
 \newcommand{\tred}[1]{\texttt{\textcolor {red} {#1}}}
 \newcommand{\tblue}[1]{\textcolor {blue} {#1}}

%\makeatletter
%\newcommand{\leqnomode}{\tagsleft@true}
%\newcommand{\reqnomode}{\tagsleft@false}
%\makeatother
%% \newcommand{\tgreen}[1]{#1}
%% \newcommand{\tred}[1]{}
%% \newcommand{\tblue}[1]{#1}

\begin{abstract}

This paper addresses the computation of normalized solid angle measure of polyhedral cones. 
This is well understood in dimensions two and three. For higher dimensions, assuming that a positive-definite criterion is met, the measure can be computed via a multivariable hypergeometric series. We present two decompositions of full-dimensional simplicial cones into finite families of cones satisfying the positive-definite criterion, enabling the use of the hypergeometric series to compute the solid angle measure of any polyhedral cone. Additionally, our second decomposition method yields cones with a special tridiagonal structure, reducing the number of required coordinates for the hypergeometric series formula. Furthermore, we investigate the convergence of the hypergeometric series for this case. Our findings provide a powerful tool for computing solid angle measures in high-dimensional spaces. 

\end{abstract}
\maketitle

%\clearpage
%% {\footnotesize
%% \tableofcontents}
%%%%%%%%%%%%%%%%%%%%%%%%%%%%%%%%%%%%%%%%

\section{Introduction}
The study of solid angles %in higher dimensions 
has gained a lot of traction in the past several years and has been investigated in several publications (see \cite{ribando2006measuring}, \cite{Beck2007}, \cite{Beck2009PositivityTF}, \cite{fukshansky2011bounds}, \cite{mazonka2012solid} etc.). 
%\tred{(why are these three papers cited here, and in this order?)}
%most cited + more recent
Of particular interest is the solid angle measure of a polyhedral cone in higher dimensions. This measure has widespread potential applications, from computing the expected number of simplices in a triangulation of an $n$-cube \cite{ribando2000thesis},
 %\tred{(right citation?)} 
 to calculating relative pixel purity index (PPI) scores \cite{heylen2012multidimensional}, to computing the volume of the feasibility domain of an ecological community \cite{mariani2019nestedness}. The various ways in which higher-dimensional solid angle measures appear in literature demonstrate the importance of the topic and necessitate a deeper understanding of it. 

Recall that a polyhedral cone is the intersection of a finite number of half-spaces passing through the origin. By the well-known Minkowski–Weyl theorem, a polyhedral cone is also the convex conic hull of a finite number of vectors. 
In $\R^2$, the solid (or plane) angle measure can be computed via the standard inner product. 
In $\R^3$, the solid angle measure of a cone generated by three unit vectors is the area of the spherical triangle on the unit sphere formed by the unit vectors. There is a closed formula dating back to Euler and Lagrange which uses the scalar triple product. Assume that ${\v_1}, {\v_2}$ and ${\v_3}$ are unit vectors in $\R^3$. The solid angle measure of the cone generated by ${\v_1}, {\v_2}$ and ${\v_3}$ is given by \cite{eriksson1990measure}:
\[
E := 2\tan^{-1}\left(\frac{|{\v_1} \cdot ({\v_2} \times {\v_3})|}{1+{\v_2}\cdot {\v_3}+{\v_2}\cdot {\v_1}+{\v_1}\cdot {\v_3}}\right)
\; ,\]
and the normalized solid angle measure of the cone, which is the proportion of $\R^3$ that the cone occupies, is therefore $\frac{E}{4\pi}$.

In higher dimensions, we define the solid angle measure of a polyhedral  cone according to \cite{hajja2002measure}, and we normalize it in such a way that the whole space has measure $1$, according to \cite{ribando2006measuring}.
\begin{definition} \label{def:solid-angle} %Let $C\subseteq \R^n$ be a polyhedral cone. 
The \emph{normalized solid angle measure}, of a polyhedral cone $C\subseteq \R^n$ with respect to the space $\Rr^n$, denoted by $\tilde{\Omega}_n(C)$, is defined as the ratio of the $(n-1)$-dimensional volume of the intersection of the cone with the unit sphere $S_{n-1}$ in $\R^n$ centered around the origin to the volume of the unit sphere, i.e.,
\begin{equation} \label{eq:solid-angle-as-ratio-of-volumes}
  \tilde{\Omega}_n(C) = \frac{\text{vol}_{n-1}(C\cap S_{n-1})}{\text{vol}_{n-1}(S_{n-1})}.
\end{equation}
\end{definition}
We note that the above definition works for convex non-polyhedral cones as well. It also matches with the definition in lower dimensions $n=2$ and $n=3$. When the cone $C$ is of dimension $d<n$, let $S_{d-1}$ denote the unit sphere centered around the origin in the $d$-dimensional linear space containing $C$. We define the normalized solid angle measure of $C$ with respect to the linear span of $C$ in a similar manner: %\tblue{TODO: a bit funky because $S_{d-1}$ is not really defined.}
\begin{equation*} %\label{eq:solid-angle-dim-d}
  \tilde{\Omega}_d(C) = \frac{\text{vol}_{d-1}(C\cap S_{d-1})}{\text{vol}_{d-1}(S_{d-1})}.
\end{equation*}

\begin{remark} \label{rmk:simplicial_and_fulldimensonal}
Without loss of generality, in the following, we need only consider \emph{full-dimensional simplicial cones} $C$. Indeed, if $C$ is not simplicial, then triangulation allows us to decompose $C$ into simplicial cones (i.e. having a linearly independent system of generators), and $\tilde{\Omega}_n(C)$ is the sum of the normalized solid angles of the cones in the decomposition. If $C$ is not full-dimensional, then by our normalization, $\tilde{\Omega}_n(C)=0.$ Note however that $\tilde{\Omega}_d(C)\neq 0$, where $d$ is the dimension of the linear span of $C$.  
%Suppose a polyhedral cone $C$ is generated by exactly $m$ extreme vectors, $\v_1, \hdots, \v_m$, and the linear span of $C$ has dimension $n$. If $m>n$,  Let $d$ be the dimension of the ambient space, i.e. the space where $\v_1, \hdots, \v_m$ live. If $n<d$, 
Other papers normalize with respect to a half space \cite{gourion2010deterministic} or the affine space \cite{kabluchko2020absorption}. We show the distinction in the following example.
\end{remark}

\begin{example}
Consider the polyhedral cone $C\subseteq \Rr^3$ generated by the vectors $\v_1 = (1,0,0)$, $\v_2 = (0,1,0)$ and $\v_3 = (-1,0,0)$. The ambient space has dimension three. However, it is clear that $C$ lies on the plane $z=0$, and so the dimension of the linear span of $C$ is 2. Thus, $\tilde{\Omega}_3(C)=0.$ However, when the measure is considered with respect to the linear span, we see $\tilde{\Omega}_2(C) \neq 0$. Note that although $C$ is not simplicial, it can be triangulated into two simplical cones: $C_1$ generated by the vectors $\v_1$ and $\v_2$, and $C_2$ generated by the vectors $\v_2$ and $\v_3$. Thus, $\tilde{\Omega}_2(C)=\tilde{\Omega}_2(C_1)+\tilde{\Omega}_2(C_2)=0.25+0.25=0.5.$  
\end{example}

To the best of our knowledge, there currently does not exist a neat closed formula for the solid angle measure of polyhedral cones in dimensions greater than three. 
%\tred{write in words.}
As noted by Seeger and Torki \cite{seeger2015centers}, the derivation of an easily computable formula for the solid angle measure is a difficult task and one must use other methods for approximation. These include probabilistic methods, numerical integration methods, and the multi-variate power series method. In regard to the first two methods, see \cite{gourion2010deterministic}. 
%\tred{both the two first methods have the citation [10] and the last one should have [16] and [3]? Then rephrase the sentence: In regards to the first two methods, see [10] blahblahblah.}
In regard to the last method, Ribando \cite{ribando2006measuring} and Aomoto \cite{aomoto1977analytic} independently discovered a formula for the normalized solid angle measure of a simplicial cone, in the form of a multi-variate hypergeometric series.

Ribando derived the power series formula by noting that there are many equivalent ways to represent the normalized solid angle measure. It is natural to view the volumes in \autoref{def:solid-angle} as integrals. Doing so gives the following equivalent definitions of $\tilde{\Omega}_n$:
\begin{equation} \label{eq:solid-angle-proportion}
    \tilde{\Omega}_n(C) = \frac{\text{vol}_{n-1}(C\cap S_{n-1})}{\text{vol}_{n-1}(S_{n-1})}  = \frac{\int_{C} f(\x) d\x}{\int_{\Rr^n} f(\x) d\x} \; ,
\end{equation}
where $f\colon \R^n \to \R$ is any function that is invariant under rotations around the origin. Ribando determined that the form $f=e^{-\|\x\|^2}$ led to nice computations with the integral
\begin{equation} \label{eq:solid-angle-exp}
    \tilde{\Omega}_n(C) = \frac{\int_{C} e^{-\|\x\|^2} d\x}{\int_{\Rr^n} e^{-\|\x\|^2} d\x} = \frac{\int_{C} e^{-\|\x\|^2} d\x}{\pi^{\frac{n}{2}}} \; ,
\end{equation}
which when simplified and expanded gave rise to a convergent hypergeometric series. Furthermore, Ribando showed that the convergence of the power series was dependent on a certain matrix, which we call the \emph{associated matrix}.

Ribando's results are presented in \autoref{thm:ribando2006} below.
Recall that by \autoref{rmk:simplicial_and_fulldimensonal}, we will only consider full-dimensional simplicial cones.
\begin{definition}
\label{def:associated-matrix}
 Let $C$ be a simplicial cone in $\R^n$, whose generators are the linearly independent unit vectors ${ \v_1}, \dots, {\v_n} \in \R^n$. Then, the \emph{associated matrix} of $C$, denoted by $M_n(C)$, is the $n\times n$ matrix whose $(i,j)$-th entry is $-|\v_i \cdot \v_j|$ for $i\neq j$, and whose diagonal $(i,i)$-th entry is 1.
\begin{equation} \label{eq:associated-matrix}
M_n(C) = \begin{bmatrix}
1 & -|\v_1 \cdot \v_2| & \cdots & -|\v_1 \cdot \v_n| \\
-|\v_2 \cdot \v_1| & 1 & \cdots & -|\v_2 \cdot \v_n| \\
\vdots & \vdots & \ddots & \vdots \\
-|\v_n \cdot \v_1| & -|\v_n \cdot \v_2| & \cdots & 1 \\
\end{bmatrix}.
\end{equation}
 
\end{definition}

\begin{theorem}[{\cite[Theorem 2.2 and Corollary 3.3]{ribando2006measuring}, rephrased}] \label{thm:ribando2006}
Let $C \subseteq \Rr^n$ be the simplicial cone generated by the unit vectors ${\v_1}, \dots, {\v_n}$. Let $V \in \R^{n\times n}$ 
be the matrix whose $i^{th}$ column is ${\v_i}.$ Let $\alpha_{ij}={\v_i}\cdot {\v_j}$ for $1\leq i, j \leq n$.  Let 
\begin{equation} \label{eq:t-alpha-series}
    T_{\ve \alpha} = \frac{|\det V|}{(4\pi)^{\frac{n}{2}}} \sum_{{\ve a}\in \N^{\binom{n}{2}}} \left[ \frac{(-2)^{\sum_{i<j} a_{ij}}}{\prod_{i<j} a_{ij}!} \prod_{i=1}^{n} \Gamma\left(\frac{1+\sum_{m\neq i} a_{im}}{2}\right)\right] {\ve \alpha}^{\ve a}.
\end{equation}
Then, $T_{\ve \alpha}$ converges absolutely to the normalized solid angle measure $\tilde{\Omega}_n(C)$ of the cone if and only if its associated matrix $M_{n}(C)$ is positive definite.
\end{theorem}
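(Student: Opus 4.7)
The plan is to expand the integral representation \eqref{eq:solid-angle-exp} as a multivariate power series in the inner products $\alpha_{ij}$ and then reduce absolute convergence to the finiteness of a Gaussian-type integral over the positive orthant. First I would apply the linear change of variables $\x=V\y$, whose Jacobian is $|\det V|$ and which maps $\R_{\ge 0}^n$ bijectively onto $C$. Since the Gram matrix $V^T V$ has $1$'s on the diagonal and entries $\alpha_{ij}$ off-diagonal, this gives
\[
\pi^{n/2}\,\tilde\Omega_n(C) \;=\; |\det V|\int_{\R_{\ge 0}^n} e^{-\sum_i y_i^2}\,e^{-2\sum_{i<j}\alpha_{ij}y_iy_j}\,d\y.
\]
Expanding the second exponential as a power series indexed by $\ve a\in\N^{\binom{n}{2}}$ and integrating each monomial against $e^{-\sum_i y_i^2}$ produces a product of one-dimensional integrals $\int_0^\infty y^k e^{-y^2}\,dy = \tfrac{1}{2}\Gamma(\tfrac{k+1}{2})$. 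Absorbing the prefactor $|\det V|/(\pi^{n/2}\cdot 2^n)=|\det V|/(4\pi)^{n/2}$ then reproduces exactly the series $T_{\ve \alpha}$ of \eqref{eq:t-alpha-series}.

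The interchange of sum and integral above---equivalently, the absolute convergence of $T_{\ve\alpha}$---is justified by Tonelli's theorem if and only if the analogous series with each $\alpha_{ij}$ replaced by $|\alpha_{ij}|$ is finite. Summing this nonnegative version back into an exponential, the condition is precisely finiteness of $\int_{\R_{\ge 0}^n} e^{-\y^T M_n(C)\,\y}\,d\y$, since by definition $M_n(C)$ has $1$'s on the diagonal and $-|\alpha_{ij}|$ off-diagonal.

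The hard part is then to show that this orthant Gaussian integral is finite precisely when $M_n(C)$ is positive definite. The ``if'' direction is immediate from coercivity of positive-definite forms. For the converse I would exploit the fact that $M_n(C)=I-B$ with $B$ a symmetric entrywise-nonnegative matrix: if $M_n(C)$ is not positive definite, then the spectral radius of $B$ satisfies $\rho\ge 1$, and the Perron--Frobenius theorem produces an eigenvector $\u\ge 0$ with $M_n(C)\,\u=(1-\rho)\,\u$. A tube argument along the ray $\R_{\ge 0}\,\u\subseteq\R_{\ge 0}^n$ then forces divergence: if $\rho>1$ the integrand grows along the ray, while if $\rho=1$ the integrand is constant there and any small cylindrical neighborhood of the ray within the orthant carries infinite Lebesgue measure. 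Combining this equivalence with the preceding series computation completes the proof.
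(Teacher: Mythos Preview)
This theorem is not proved in the paper; it is quoted from Ribando \cite{ribando2006measuring} (Theorem~2.2 and Corollary~3.3 there), so there is no in-paper argument to compare against. Your derivation of the series---the substitution $\x=V\y$ in the Gaussian integral \eqref{eq:solid-angle-exp}, expansion of $e^{-2\sum\alpha_{ij}y_iy_j}$, and termwise integration against $e^{-\sum y_i^2}$---is exactly the route the paper attributes to Ribando in the paragraph preceding the statement.

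Your convergence argument is essentially correct, with one cosmetic slip: because of the factor $(-2)^{\sum a_{ij}}$, the substitution that makes every term nonnegative is $\alpha_{ij}\mapsto -|\alpha_{ij}|$, not $\alpha_{ij}\mapsto |\alpha_{ij}|$; your identification of the resummed integral with $\int_{\R_{\ge 0}^n}e^{-\y^T M_n(C)\y}\,d\y$ is nonetheless the right one. The Perron--Frobenius step is sound: $B=I-M_n(C)$ is symmetric and entrywise nonnegative, so if $M_n(C)$ fails to be positive definite then $\rho(B)=1-\lambda_{\min}(M_n(C))\ge 1$ is an eigenvalue with a nonnegative eigenvector $\u$, and your tube/cylinder argument along $\R_{\ge 0}\u$ (which stays in the orthant precisely because $\u\ge 0$) forces divergence in both the $\rho>1$ and $\rho=1$ cases.

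For context, the paper's later Remark~\ref{rk:boundary-cvg} (citing \cite[Theorem~3.2]{ribando2006measuring}) indicates that Ribando's own treatment of the convergence criterion proceeds through Horn's classical theory of multivariate hypergeometric series, parameterizing the boundary of the domain of absolute convergence by the limiting ratios $\Psi_i$ and identifying it with the hypersurface $\det M_n(C)=0$. Your Gaussian-integral/Perron--Frobenius route is more self-contained for the bare ``if and only if'' statement and avoids the Horn machinery; Ribando's route, on the other hand, yields the finer description of the convergence boundary that the paper actually uses in Section~4.
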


\noindent In the above series~\eqref{eq:t-alpha-series}, $\Gamma$ is the Euler-Gamma function; 
$\ve{\alpha} = (\alpha_{12}, \alpha_{13}, \dots \alpha_{n-1,n})$ is a multivariable in $\binom{n}{2}$ variables; and $\ve{a}=(a_{12}, a_{13}, \dots, a_{n-1,n})$ is a multiexponent. We define \[\ve{\alpha}^{\ve{a}} := \prod_{i<j}^n \alpha_{ij}^{a_{ij}}.\] When $i>j$, we set $a_{ij} = a_{ji}$, and we define $\sum_{m\neq i} a_{im}$ to be the sum over all the terms in the multiexponent where $i$ appears as either the first or second index. 

Other functionals $f$ have been used in equation~\eqref{eq:solid-angle-proportion} to represent high-dimensional solid angle measures as integrals. 
Hajja and Walker \cite[Theorem 1]{hajja2002measure} use $f(\x)=1$ and consider a standard change of variables, to obtain an integral formula for the solid angle measures, where the integral is taken over the portion of the unit sphere in the positive orthant.
%to obtain an equivalent formula, which after having applied our normalization gives:
%\[\tilde{\Omega}_n(C)= \frac{\Gamma(\frac{n}{2}+1)}{n\pi^{\frac{n}{2}}}|\det A| \int_S \| Av \|^{-n} dS\]
%where the integral is over the portion of the unit sphere in the positive orthant, and where $A$ is the $n\times n$ matrix whose columns correspond to $n$ linearly independent vectors that generate the cone $C\subseteq\Rr^n$. 
Kabluchko and Zaporozhets \cite[Proposition 1.1]{kabluchko2020absorption}  use a form related to Gaussian distribution to present an equivalent formula for the solid angle measures of a specific class of cones. 
For the purposes of this paper, we will focus exclusively on Ribando's hypergeometric series formula.

As aforementioned, alternative methods exist for estimating solid angle measures. Numerical integration methods often give rise to rough estimates of solid angle measures, or involve integrals that can only be computed under specific conditions \cite{gourion2010deterministic}. Random estimation techniques require large samples for accurate approximations, as discussed in \cite{gourion2010deterministic}. Heylen and Scheunders \cite{heylen2012multidimensional} use a multi-dimensional extension of Monte--Carlo sampling to obtain estimates of solid angle measures. Adam and Speciel \cite{adam2020evaluating} re-express the solid angle measure as in equation~\eqref{eq:solid-angle-proportion}, using the Heaviside step function followed by a Monte--Carlo approach for estimation. The multi-variate power series method is arguably the method of choice for computing exact high dimensional solid angle measures \cite{adam2020evaluating}. However, the power series method has previously been deemed as too complex and technical \cite{fukshansky2021positive}, \cite{aljundi2019online}.

 Ribando notes that there are two significant issues in applying the power series method \cite{ribando2006measuring}. The first major issue is that in higher dimensions, computational feasibility is hindered by the large number of coordinates needed to use the formula. To compute the normalized solid angle measure of an $n-$dimensional simplicial cone, one needs $\binom{n}{2}$ coordinates.  Several authors make reference to the power series, but often cite it being computationally untractable for high dimensions.  The second issue is that the positive definite-ness of the associated matrix is an essential assumption for applying the power-series formula. When this criterion is not met, ${\ve \alpha}$ lies outside of the domain of convergence of the series, and the formula cannot be used in a way that is meaningful.

In this paper, we present a method that allows one to compute the normalized solid angle measure of any polyhedral cone via the power series method. Moreover, we address the two major issues standing in the way of widespread use of the power series method as a means to compute solid angle measures. 

We address the positive-definite-ness criterion via signed decompositions of cones. %, where we consider the indicator functions of cones. 
In Section 2, we investigate Brion--Vergne decomposition of cones with respect to hyperplanes. We demonstrate in \autoref{thm:decomp-1} and \autoref{cor:decomp-1} that Brion--Vergne decomposition of a simplicial cone with respect to a particular hyperplane results in finitely many cones that either have a positive definite associated matrix (and so their solid angle measures can be computed directly via the power series formula), or contain lines (and so by applying \autoref{cor:project-out-linear-subspaces}, the computation of solid angle measures can be simplified to a lower-dimensional problem, which can be addressed using induction on dimension). 
In Section 3, we consider Brion--Vergne decomposition of cones with respect to lines. We demonstrate in \autoref{thm:decomp-2} that Brion--Vergne decomposition with respect to a particular line leads to a second decomposition of a simplicial cone into finitely many cones, each of which has a positive definite associated matrix. This theorem gives a method to decompose solid angles lying outside the domain of convergence into solid angles lying within the domain of convergence of the series in \autoref{thm:ribando2006}. 

We also address the computational feasibility of the power series method by reducing the number of coordinates needed. 
In Section 4, we explore the properties of cones resulting from the application of a stronger version of \autoref{thm:decomp-2}. These cones have associated matrices that are not only positive-definite but also tridiagonal, reducing the number of required coordinates for the power series formula from $\binom{n}{2}$ to $n-1$. \autoref{thm:tridiag-matrix} shows that under the tridiagonal assumption, the power series formula always converges to the solid angle measure. In \autoref{thm:error-any-dim}, we examine the asymptotic error of the power series formula %~\eqref{eq:t-beta-series}
for cones with tridiagonal associated matrices.

In addition, we have developed software that can compute the normalized solid angle measure of polyhedral cones in any dimension using decomposition methods presented in the current paper. Our software is written in SageMath \cite{sagemath}, a comprehensive Python-based open source computer algebra system. It is available at \url{https://github.com/yuan-zhou/solid-angle-code}, and can be run on a local installation of SageMath or online using SageMath on CoCalc. However, it is important to note that this software is not the focus of the present paper. We will defer all computational questions and experiments to a forthcoming paper.

\section{First decomposition method} %with cones containing lines}
\label{sec:decomp-cones-containing-line}
\subsection{Brion--Vergne decomposition with respect to a hyperplane}
We will make use of Brion--Vergne (B--V) decomposition (see \cite{brion1997residue}, \cite{baldoni2013intermediate}) with respect to a hyperplane. B--V decomposition is a signed decomposition of a simplicial cone into a finite family of full-dimensional simplicial cones. The hyperplane determines a facet for each of the cones in the decomposition. We adopt some of the notation used in \cite{baldoni2013intermediate} and reformulate the decomposition below. 

\begin{definition} \label{def:notations}
Let $\w_1, \dots, \w_d$ be vectors in $\Rr^n$. We denote by $\langle \w_1, \dots, \w_{d} \rangle$ the linear span of $\w_1, \dots, \w_d$.  We denote by $\cf(\w_1, \dots, \w_d)$ the cone generated by the vectors $\w_1, \dots, \w_d$. 

Let $L\subseteq \Rr^n$ be a hyperplane, and $\w_i \in \Rr^n\setminus L$. We denote by $\rho_{i}\colon \R^n \to L$ the projection onto $L$, parallel to $\w_i$.

Let $C$ be a cone. 
We denote by $[C]$ the indicator function of the cone $C$.

In this paper, we say that a cone $C$ can be decomposed into cones $C_1, \dots, C_k$, if their indicator functions satisfy the relation 
\begin{equation*}
[C] \equiv \sum_{i=1}^k s_i[C_i], \text{ where } s_i \in \{\pm 1\},
\end{equation*}
modulo indicator functions of cones containing lines (in \autoref{sec:decomp-cones-containing-line}) or modulo indicator functions of lower-dimensional cones (in \autoref{sec:decomp-lower-dim-cones}).
\end{definition}

\begin{theorem}[{Brion--Vergne decomposition with respect to a hyperplane} {\cite[Proposition 15b]{baldoni2013intermediate}}]
 \label{thm:BV-wrt-hyperplane}
Let $\w_1, \dots, \w_n$ form a basis of $\Rr^n$ and let cone $C =\cf(\w_1, \dots, \w_n)$. Let $L\subseteq \Rr^n$ be a hyperplane. Assume that $\w_i \in L$ if and only if $r+1 \leq i\leq s$ and that $\w_i$ lie on one side of $L$ for $1\leq i\leq r$, and on the other side for $s+1\leq i\leq n$. Then, we have the following relation modulo indicator functions of cones containing lines.
\begin{equation*}
[C] \equiv \sum_{i=1}^r [\Rr_+\w_i + \rho_{i}(C)] - \sum_{i=s+1}^n [\Rr_+(-\w_i) + \rho_{i}(C)].
\end{equation*}
\end{theorem}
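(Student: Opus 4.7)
The plan is to verify this identity by a direct pointwise computation in the basis $(\w_1,\ldots,\w_n)$. I would fix a linear functional $\phi\colon\Rr^n\to\Rr$ with $L=\ker\phi$, normalized so that $\phi(\w_i)>0$ for $i\in R:=\{1,\ldots,r\}$, $\phi(\w_i)=0$ for $i\in I:=\{r+1,\ldots,s\}$, and $\phi(\w_i)<0$ for $i\in S':=\{s+1,\ldots,n\}$. The essential algebraic input is that for $i\notin I$, $\rho_i(\w_j)=\w_j-(\phi(\w_j)/\phi(\w_i))\w_i$, which translates the geometric projection into the coordinates provided by the basis.

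Writing $x=\sum_j t_j(x)\w_j$ and setting $\epsilon_i=+1$ for $i\in R$, $\epsilon_i=-1$ for $i\in S'$, a short calculation then shows that each $C_i:=\Rr_+(\epsilon_i\w_i)+\rho_i(C)$ admits the uniform coordinate description
\[
C_i\;=\;\bigl\{x : t_j(x)\ge 0\text{ for all }j\ne i\text{ and }\phi(x)\ge 0\bigr\}.
\]
Indeed, matching coefficients after expanding a point of $C_i$ in both the basis and the defining generators produces the identity $\epsilon_i a=\phi(x)/\phi(\w_i)$, which combined with $a\ge 0$ forces $\phi(x)\ge 0$ regardless of the sign of $\phi(\w_i)$. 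This unifies the two summations on the right-hand side.

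Next I would analyze the discrepancy $D(x):=[C](x)-\sum_{i\in R}[C_i](x)+\sum_{i\in S'}[C_i](x)$ pointwise, stratifying by $\operatorname{sign}\phi(x)$ and by the ``negativity set'' $N(x):=\{j:t_j(x)<0\}$. On $\{\phi<0\}$ every $[C_i]$ vanishes, so $D=[C]$. On $\{\phi\ge 0\}$, $[C_i](x)=1$ iff $N(x)\subseteq\{i\}$, yielding a short table: $D=1-r+(n-s)$ on $C\cap\{\phi\ge 0\}$, $D=-1$ when $N(x)=\{k\}$ with $k\in R$, $D=+1$ when $N(x)=\{k\}$ with $k\in S'$, and $D=0$ otherwise.

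The final and hardest step is to recognize this stratified $D$ as a $\Zz$-linear combination of indicator functions of cones containing lines. The key geometric observation is that the slab cones $T_i:=\{x:t_j(x)\ge 0\ \forall\,j\ne i\}$ each contain the full line $\Rr\w_i$, so $[T_i]$ already represents zero in the target quotient. Using the decomposition $[T_i]=[C]+[T_i\setminus C]$ together with an inclusion--exclusion split of each $T_i$ across $L$, one can re-express $D$ as an integer combination of such $[T_i]$'s plus correction terms supported in $C\cap L$, all of which lie in the ideal generated by line-containing cones. I expect this combinatorial bookkeeping, rather than any single hard estimate, to be the technical core of the argument. A cleaner alternative that avoids the case analysis is induction on $|R|+|S'|$: the base case $|R|+|S'|=1$ is trivial since the sole off-$L$ generator gives $C$ back unchanged, and the inductive step peels off a single off-$L$ generator by applying the theorem with respect to an auxiliary hyperplane spanned by $L$ together with all but one of the $\w_i\notin L$.
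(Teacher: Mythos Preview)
The paper does not prove this theorem at all: it is quoted from \cite[Proposition~15b]{baldoni2013intermediate} and used as a black box. So there is no ``paper's own proof'' to compare against.

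That said, your setup is sound and worth keeping. The description
\[
C_i=\{x:\ t_j(x)\ge 0\ \forall\,j\ne i,\ \phi(x)\ge 0\}
\]
is correct and is exactly the right coordinate reformulation; the pointwise case analysis of $D$ that follows from it is also correct.

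Where the proposal is genuinely incomplete is the last step. Your assertion that the remaining correction terms are ``supported in $C\cap L$'' is not accurate: if one subtracts the obvious line-containing combination $G=-\sum_{i\in R}[T_i]+\sum_{i\in S'}[T_i]$, then on $\{\phi\ge 0\}$ one gets $D-G=[C\cap\{\phi\ge 0\}]$ (value $1$ on all of $C\cap\{\phi\ge0\}$, not just on $C\cap L$), and on $\{\phi<0\}$ the values $1+r-(n-s)$, $1$, $-1$ appear on full-dimensional regions. So a single pass with the $T_i$'s does not finish the job; you still have to exhibit further line-containing cones whose combination matches these leftovers, and that bookkeeping is the actual content of the identity, not an afterthought. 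Your alternative inductive scheme also needs repair: $L$ is already a hyperplane, so ``the hyperplane spanned by $L$ together with all but one of the $\w_i\notin L$'' is $\R^n$, not a hyperplane. If you instead take $L'=\langle \w_j:j\ne k\rangle$ for some $k\in R\cup S'$, the base case of the theorem gives back $[C]\equiv[C]$ and no reduction occurs. A workable induction has to change the cone, not just the hyperplane.

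If you want to finish along your direct line, one clean route is to pass to the half-open variants (replace each closed condition $t_j\ge0$ by a consistent choice of $t_j\ge0$ or $t_j>0$) so that the pieces on the right-hand side become genuinely disjoint and cover a half-space; then use that $[\{\phi\ge0\}]$ and $[\{\phi\le0\}]$ are themselves line-containing. This is essentially how Brion--Vergne and Baldoni et al.\ argue, via valuations/generating functions in which cones with lines vanish identically.
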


\subsection{Solid angle decomposition that includes cones containing lines} 
We will first show that the normalized solid angle of an orthogonal sum of convex cones is the product of the normalized solid angles of the cones in the orthogonal sum.

% The previous wrong version of the lemma has been removed to side note for solid angle tex
\begin{lemma}\label{lemma:orthogonality}
Let $C\subseteq \R^n$ be a polyhedral cone (or more generally, a convex cone) of dimension $d$. Suppose that $C=C_1\oplus C_2$ is the orthogonal sum of cones $C_1$ and $C_2$ of dimensions $d_1$ and $d_2$, respectively. Then, the normalized solid angle measure of $C$ with respect to the linear span of $C$ satisfies that
\begin{equation*}
\label{eq:orthogonality}
\tilde{\Omega}_d(C)=\tilde{\Omega}_{d_1}(C_1) \cdot \tilde{\Omega}_{d_2}(C_2).
\end{equation*}
\end{lemma}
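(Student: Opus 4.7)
The plan is to leverage the Gaussian integral representation of the normalized solid angle measure, which Ribando introduced in equation~\eqref{eq:solid-angle-exp}. Although that representation was stated for full-dimensional cones in $\Rr^n$, it applies verbatim to the cone $C$ inside its $d$-dimensional linear span, since the Gaussian $e^{-\|\x\|^2}$ is rotation-invariant and $\int_{\Rr^d} e^{-\|\x\|^2} d\x = \pi^{d/2}$. Thus we may write
\[
\tilde{\Omega}_d(C) \;=\; \frac{\int_C e^{-\|\x\|^2}\,d\x}{\pi^{d/2}},
\]
and similarly for $C_1$ and $C_2$ with exponents $d_1/2$ and $d_2/2$.

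Next, I would exploit the orthogonal decomposition. Choose an orthonormal basis of $\spann(C)$ so that its first $d_1$ vectors span $\spann(C_1)$ and its last $d_2$ vectors span $\spann(C_2)$. In these coordinates every $\x\in\spann(C)$ decomposes uniquely as $\x=\x_1+\x_2$ with $\x_i\in\spann(C_i)$, and orthogonality gives both $\|\x\|^2=\|\x_1\|^2+\|\x_2\|^2$ and $d\x=d\x_1\,d\x_2$. The crucial membership fact, which I would verify as a short lemma, is that $\x\in C$ if and only if $\x_1\in C_1$ and $\x_2\in C_2$; this is immediate from the definition of an orthogonal sum of cones, since each element of $C$ is a nonnegative combination of generators lying in $C_1\cup C_2$, and no cross-cancellation is possible because $\spann(C_1)\cap\spann(C_2)=\{0\}$.

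With these observations in hand, Fubini's theorem yields
\[
\int_C e^{-\|\x\|^2}\,d\x \;=\; \int_{C_1}\int_{C_2} e^{-\|\x_1\|^2}e^{-\|\x_2\|^2}\,d\x_2\,d\x_1 \;=\; \Bigl(\int_{C_1} e^{-\|\x_1\|^2}\,d\x_1\Bigr)\Bigl(\int_{C_2} e^{-\|\x_2\|^2}\,d\x_2\Bigr).
\]
Dividing both sides by $\pi^{d/2}=\pi^{d_1/2}\cdot\pi^{d_2/2}$ and recognizing each factor as the Gaussian representation of the corresponding normalized solid angle, we obtain $\tilde{\Omega}_d(C)=\tilde{\Omega}_{d_1}(C_1)\cdot\tilde{\Omega}_{d_2}(C_2)$.

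There is no deep obstacle in this argument; the only subtle step is the justification that the Gaussian formula~\eqref{eq:solid-angle-exp} transports to arbitrary-dimensional linear subspaces, and that the measure-theoretic factorization of the integration domain $C$ as $C_1\times C_2$ is valid in the chosen orthonormal coordinates. Both points are routine but worth stating explicitly, since the definition $\tilde{\Omega}_d$ was originally phrased via spherical volumes rather than Gaussian integrals.
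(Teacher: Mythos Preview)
Your proof is correct and follows essentially the same approach as the paper: both use the Gaussian integral representation~\eqref{eq:solid-angle-exp}, exploit the orthogonal decomposition $\|\x\|^2=\|\x_1\|^2+\|\x_2\|^2$ to factor $\int_C e^{-\|\x\|^2}\,d\x$ as a product, and then divide by $\pi^{d/2}=\pi^{d_1/2}\pi^{d_2/2}$. Your version is slightly more explicit about the choice of coordinates, the membership equivalence $\x\in C\iff \x_i\in C_i$, and the invocation of Fubini, but the argument is otherwise identical to the paper's.
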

\begin{proof}
Since $C_1$ and $C_2$ are orthogonal, $d=d_1+d_2$. Any $\ve{u}\in C$ can be uniquely expressed as $\ve{u}=\ve{v}+\ve{w}$ where $\ve{v}\in C_1, \ve{w}\in C_2$ and $\|\ve{u}\|^2=\|\ve{v}\|^2+\|\ve{w}\|^2$. Thus,  
\[\int_{C} e^{-\|\ve{u}\|^2} d\ve{u} = \int_{C_1\oplus C_2} e^{-\|\ve{v}+\ve{w}\|^2} d(\ve{v}+\ve{w}) =\int_{C_1} e^{-\|\ve{v}\|^2} d\ve{v} \int_{C_2}e^{-\|\ve{w}\|^2} d\ve{w}.\]
%Since for any positive integer $n$, \[\int_{\Rr^n} e^{-\|\ve{u}\|^2} d\ve{u}=\pi^\frac{n}{2},\] 
It then follows from equation~\eqref{eq:solid-angle-exp} that 
\[
 \tilde{\Omega}_d(C)=\frac{\int_{C} e^{-\|\ve{u}\|^2} d\ve{u}}{\pi^{d/2}}=\frac{\int_{C_1} e^{-\|\ve{v}\|^2} d\ve{v}}{\pi^{d_1/2}}\cdot\frac{\int_{C_{2}}e^{-\|\ve{w}\|^2} d\ve{w}}{\pi^{d_2/2}}=\tilde{\Omega}_{d_1}(C_1) \cdot \tilde{\Omega}_{d_2}(C_2).\]
\end{proof}

In the special case that one of the orthogonal parts is an $\ell$-dimensional linear subspace $L$ of $\R^n$, since $\tilde{\Omega}_\ell(L)=1$, we enjoy the following corollary.

\begin{corollary} \label{cor:project-out-linear-subspaces}
Let $C\subseteq \R^n$ be a polyhedral cone (or more generally, a convex cone) of dimension $d$, such that $C=C' \oplus L$ is the orthogonal sum of a cone $C'$ of dimension $d'<d$ and a linear subspace $L$ of $\R^n$. Then, 
$\tilde{\Omega}_d(C) = \tilde{\Omega}_{d'}(C')$.
\end{corollary}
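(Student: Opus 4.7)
The plan is to derive this corollary as an immediate specialization of \autoref{lemma:orthogonality}. Writing $C = C' \oplus L$ as an orthogonal sum of cones of dimensions $d'$ and $\ell$ respectively, with $d = d' + \ell$, the lemma gives
\[
\tilde{\Omega}_d(C) = \tilde{\Omega}_{d'}(C') \cdot \tilde{\Omega}_{\ell}(L).
\]
So the entire content of the proof reduces to verifying that a linear subspace has normalized solid angle measure equal to $1$ when measured with respect to its own linear span.

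For this, I would observe that $L$, viewed as a convex cone of dimension $\ell$, has linear span equal to $L$ itself. Hence in the definition of $\tilde{\Omega}_\ell$, the unit sphere $S_{\ell-1}$ lives in $L$, and $L \cap S_{\ell-1} = S_{\ell-1}$. Therefore
\[
\tilde{\Omega}_\ell(L) = \frac{\mathrm{vol}_{\ell-1}(L \cap S_{\ell-1})}{\mathrm{vol}_{\ell-1}(S_{\ell-1})} = 1.
\]
Substituting this back yields $\tilde{\Omega}_d(C) = \tilde{\Omega}_{d'}(C')$, completing the proof.

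There is essentially no obstacle here; the only subtlety worth flagging is that one must be careful to apply \autoref{lemma:orthogonality} with the measure taken with respect to the linear span of each cone (which is why $\tilde{\Omega}_\ell(L) = 1$ rather than $0$ — the latter would be the case if $L$ were measured as a lower-dimensional subset of $\R^n$, as discussed in \autoref{rmk:simplicial_and_fulldimensonal}). Once this bookkeeping is made explicit, the corollary follows in a single line.
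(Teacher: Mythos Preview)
Your proposal is correct and follows exactly the paper's own approach: the corollary is deduced from \autoref{lemma:orthogonality} by noting that $\tilde{\Omega}_\ell(L)=1$ for an $\ell$-dimensional linear subspace $L$. You supply a bit more justification for $\tilde{\Omega}_\ell(L)=1$ than the paper does, but the argument is the same.
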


\autoref{cor:project-out-linear-subspaces} demonstrates how computing solid angles of cones containing lines reduces to computing solid angles of lower dimensional cones. 

We now present a signed decomposition of a given full-dimensional simplicial cone whose solid angle measure is of interest (recall the assumption is without loss of generality by \autoref{rmk:simplicial_and_fulldimensonal}), in a way that the resulting cones either contain lines so that \autoref{cor:project-out-linear-subspaces} applies, or have positive definite associated matrices so that \autoref{thm:ribando2006} applies.
 
\begin{theorem} \label{thm:decomp-1}
Given linearly independent unit vectors $\w_1, \w_2,\dots, \w_n \in \Rr^n$, the cone $\cf(\w_1, \w_2, \dots, \w_n)$ can be decomposed into a finite family of cones, each of which is either:
\begin{enumerate}[\rm(I)]
    \item \label{Form 1} a cone containing lines, or
    \item \label{Form 2} a cone $\cf(\v_1, \v_2,\dots, \v_n)$ of dimension $n$ whose associated matrix %$M_n$ %according to equation~\eqref{eq:associated-matrix}
\[M_n= \begin{bmatrix}
1 & -|\v_1 \cdot \v_2| & \cdots & -|\v_1 \cdot \v_n| \\
-|\v_2 \cdot \v_1| & 1 & \cdots & -|\v_2 \cdot \v_n| \\
\vdots & \vdots & \ddots & \vdots \\
-|\v_n \cdot \v_1| & -|\v_n \cdot \v_2| & \cdots & 1 \\
\end{bmatrix}\]
is positive definite. In particular,
    \begin{enumerate}[\rm(a)]
        \item \label{property:D1-norm-is-1} $\|\v_i\| = 1$ for $i=1, 2,\dots, n$
        \item \label{property:D1-vn=wn} $\v_n = \w_n$
        \item \label{property:D1-span} $\langle \v_1, \v_2,\dots, \v_{n-1} \rangle = \langle \w_1, \w_2,\dots, \w_{n-1} \rangle$
        \item \label{property:D1-orthogonal-to-vn} $\v_i \cdot \v_n =0$ for $i = 1, 2,\dots, n-2$
        \item \label{property:D1-assoc-matrix-pos-def} 
        $\x^T M_{n}\, \x = \left\lVert\begin{bmatrix}\epsilon_1\v_1, \dots, \epsilon_n\v_n\end{bmatrix} \x\right\rVert^2$ holds for every $\x\in \Rr^n$, where $\epsilon_i = \pm 1$.
        %There exist $\epsilon_1, \epsilon_2, \dots, \epsilon_n \in \{\pm 1\}$ such that  \[\x^T M_{n} \x = \left\lVert\begin{bmatrix}\epsilon_1\v_1, \dots, \epsilon_n\v_n\end{bmatrix} \x\right\rVert^2\] holds for every $\x\in \Rr^n$.
    \end{enumerate}
\end{enumerate}
\end{theorem}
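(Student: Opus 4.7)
The plan is induction on $n$. For $n \leq 2$ the cone itself works with $\v_i = \w_i$: condition (d) is vacuous, and the required signs $\epsilon_i$ for (e) are produced by inspection. So assume the statement in all dimensions strictly less than $n$ and consider $\cf(\w_1,\dots,\w_n)$ with $n \geq 3$.

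First I would set $H = \langle \w_1,\dots,\w_{n-1}\rangle$ and $M = H \cap \w_n^\perp$. Assuming some $\w_i \cdot \w_n \neq 0$ with $i < n$ (else $\cf(\w_1,\dots,\w_n) = \cf(\w_1,\dots,\w_{n-1}) \oplus \R_+\w_n$ is an orthogonal sum, and we may apply the inductive hypothesis directly in $\w_n^\perp$), the sum $L = M + \langle \w_n\rangle$ is a hyperplane in $\R^n$ containing $\w_n$, with $\w_i \in L$ iff $\w_i \perp \w_n$. Applying \autoref{thm:BV-wrt-hyperplane} with respect to $L$, and noting that $\rho_i(\w_n) = \w_n$, a direct calculation gives $\rho_i(\w_j) = \w_j - \tfrac{\w_j \cdot \w_n}{\w_i \cdot \w_n}\w_i \in M$ for $j \neq i$, $j < n$. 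Each resulting full-dimensional cone, after normalization and relabeling, has the form $\cf(\u_1,\dots,\u_{n-2},\u_{n-1},\w_n)$ with $\u_{n-1} = \pm \w_i$ and $\u_1,\dots,\u_{n-2}$ unit vectors in $M \subseteq \w_n^\perp$. This yields (a), (b), (d); property (c) follows from the identity $\rho_i(\w_j) + \tfrac{\w_j \cdot \w_n}{\w_i \cdot \w_n}\w_i = \w_j$, which gives $\langle \u_1,\dots,\u_{n-1}\rangle = H$.

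The main obstacle is property (e): given (a)--(d), one must exhibit $\epsilon_i \in \{\pm 1\}$ with $\epsilon_i\epsilon_j (\u_i \cdot \u_j) = -|\u_i \cdot \u_j|$ for all $i \neq j$. This is \emph{not} automatic after a single BV step -- explicit examples (e.g.\ $\w_j = a e_j + b e_n$ with common $a, b > 0$ for $j < n$) show that the projections $\rho_i(\w_j)$ can form an unbalanced triangle of positive inner products. To resolve it I would apply the theorem recursively in dimension $n-1$ to $\cf(\u_1,\dots,\u_{n-1}) \subseteq H$, yielding cones $\cf(\v_1,\dots,\v_{n-1})$ with $\v_{n-1} = \u_{n-1}$ and a valid sign assignment $\epsilon_1,\dots,\epsilon_{n-1}$ on the sub-graph. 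The decisive observation is that property (c) of the recursion gives $\langle \v_1,\dots,\v_{n-2}\rangle = \langle \u_1,\dots,\u_{n-2}\rangle = M$ (using that $\u_1,\dots,\u_{n-2}$ are $n-2$ linearly independent vectors in the $(n-2)$-dimensional space $M$), so the recursed vectors stay in $M \subseteq \w_n^\perp$ and orthogonality to $\w_n$ survives.

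To complete property (e) in dimension $n$, observe that $\w_n$ is a \emph{leaf} in the signed graph of $\cf(\v_1,\dots,\v_{n-1},\w_n)$: it is orthogonal to $\v_k$ for $k \leq n-2$ and connects at most to $\v_{n-1}$. Setting $\epsilon_n := -\mathrm{sign}(\v_{n-1}\cdot\w_n)\cdot \epsilon_{n-1}$ (or arbitrary when that inner product vanishes) completes the sign assignment, so the full signed graph is balanced and (e) holds. For the bookkeeping, a cone $A \subseteq H$ contains a line in $H$ iff $A + \R_+\w_n$ contains a line in $\R^n$ (since $\w_n \notin H$), so the recursion's identity in $H$ modulo cones containing lines lifts to the corresponding identity in $\R^n$, with all line-containing pieces absorbed into form (I). The decomposition is finite, since each BV step and each recursive call produces finitely many cones.
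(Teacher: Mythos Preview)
Your proposal is correct and follows essentially the same approach as the paper. Both proofs proceed by induction on $n$, handle the orthogonal case $\w_i\perp\w_n$ separately, apply Brion--Vergne decomposition with respect to the same hyperplane $L$ (your $M+\langle\w_n\rangle$ coincides with the paper's $\langle\ve\ell_1,\dots,\ve\ell_{n-2},\w_n\rangle$), obtain the same projection formula $\rho_i(\w_j)=\w_j-\tfrac{\w_j\cdot\w_n}{\w_i\cdot\w_n}\w_i$, recurse on the $(n-1)$-dimensional subcone, and then extend the sign assignment by choosing $\epsilon_n$ against the single remaining edge $\v_{n-1}\cdot\w_n$; your ``leaf in the signed graph'' phrasing and the paper's explicit quadratic-form computation are two ways of saying the same thing.
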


Before proving the theorem, we point out that the property~\eqref{property:D1-assoc-matrix-pos-def} ensures the positive definiteness of the associated matrix $M_n$ given by equation~\eqref{eq:associated-matrix}. Indeed, for any nonzero vector $\x := \vect{x_1, \dots, x_n}$, since %the generators 
$\v_1, \dots, \v_n$ are linearly independent,
\[\begin{bmatrix}\epsilon_1\v_1, \dots, \epsilon_n\v_n\end{bmatrix} \x = \sum_{i=1}^n \epsilon_i x_i \v_i \neq \ve 0,\]
and hence $\x^T M_{n}\, \x = \left\lVert\begin{bmatrix}\epsilon_1\v_1, \dots, \epsilon_n\v_n\end{bmatrix} \x\right\rVert^2 > 0$. As a consequence, the solid angle formula~\eqref{eq:t-alpha-series} for a cone in the decomposition, which is of the latter form~\eqref{Form 2}, is a convergent power series. 

\begin{proof}
We proceed by induction on the dimension. 

Consider the base case $n=2$. Let $\w_1 = \vect{w_{11},w_{12}}$ and $\w_2 = \vect{w_{21}, w_{22}}$ be linearly independent unit vectors. We show that the cone $\cf(\w_1, \w_2)$ is already of the form~\eqref{Form 2}. The cone satisfies the properties~\eqref{property:D1-norm-is-1} through \eqref{property:D1-orthogonal-to-vn} trivially. Let $\x = \vect{x_1 , x_2}\in \Rr^2$. The equation in property \eqref{property:D1-assoc-matrix-pos-def} has the left-hand-side
\begin{equation*}
\x^T M_{n}\, \x =
    \begin{bmatrix} x_1 & x_2 \end{bmatrix} \cdot \begin{bmatrix} 1 & -|\w_1 \cdot \w_2| \\-|\w_1 \cdot \w_2| & 1 \end{bmatrix} \cdot \begin{bmatrix} x_1 \\ x_2 \end{bmatrix}  = {x_1}^2 + {x_2}^2 -2 x_1 x_2 |\w_1 \cdot \w_2|,
\end{equation*}
and the right-hand side
\begin{align*}
&\left\lVert\begin{bmatrix}\epsilon_1\w_1, \epsilon_2\w_2\end{bmatrix} \x\right\rVert^2 = \left\lVert\begin{bmatrix} \epsilon_1w_{11} & \epsilon_2w_{21} \\ \epsilon_1w_{12} &\epsilon_2w_{22} \end{bmatrix} \cdot \begin{bmatrix} x_1 \\ x_2\end{bmatrix}\right\rVert^2 \\
& = \left(\epsilon_1 x_1 w_{11}+\epsilon_2 x_2 w_{21}\right)^2 + \left(\epsilon_1 x_1 w_{12}+\epsilon_2 x_2 w_{22}\right)^2 \\
&= {x_1}^2(w_{11}^2+w_{12}^2) + {x_2}^2(w_{21}^2+w_{22}^2) + 2\epsilon_1 \epsilon_2 {x_1} {x_2} (w_{11}w_{21}+w_{12}w_{22})\\
&= {x_1}^2 + {x_2}^2 + 2\epsilon_1\epsilon_2 {x_1} {x_2} (\w_1\cdot \w_2).
\end{align*}
Thus, for appropriate choices of $\epsilon_1$ and $\epsilon_2$, we see that property~\eqref{property:D1-assoc-matrix-pos-def} is satisfied.

Suppose the statement is true for dimension $n-1$. Now consider the cone $\cf(\w_1, \w_2, \dots, \w_n)$ in dimension $n$ which is not already of the form~\eqref{Form 1} or \eqref{Form 2}. We distinguish two cases depending on the orthogonality.

\textbf{Case 1}: Suppose that $\w_n$ is orthogonal to all $\w_i$ for $1\leq i \leq n-1$.
Set $\v_n = \w_n$. By the inductive hypothesis, there is a (signed) decomposition of the cone $\cf(\w_1, \dots, \w_{n-1})$ into finitely many cones $\mathfrak{k}_1, \dots, \mathfrak{k}_N$
that either contain lines or are of the second form and satisfy the desired properties. Then, it is clear that the cones 
\[C_1 := \mathfrak{k}_1+\cf(\v_n), \; C_2 := \mathfrak{k}_2+\cf(\v_n), \; \dots, \; C_N := \mathfrak{k}_N+\cf(\v_n)\] 
obtained by the Minkowski sums give a decomposition of $\cf(\w_1, \dots, \w_n)$. %Rename these cones to $C_1, \dots, C_N$ respectively.

If $\mathfrak{k}_i$ is a cone containing lines in the decomposition of $\cf(\w_1, \dots, \w_{n-1})$, then $C_i$ also contains lines, and hence is of form~\eqref{Form 1}. 
Otherwise, $\mathfrak{k}_i$ must be of the form $\cf(\v_1, \dots, \v_{n-1})$ that satisfies the properties~\eqref{property:D1-norm-is-1}--\eqref{property:D1-assoc-matrix-pos-def} of dimension $n-1$. It is clear that $C_i=\cf(\v_1, \dots, \v_{n-1}, \v_n)$ satisfies properties~\eqref{property:D1-norm-is-1}, \eqref{property:D1-vn=wn} and \eqref{property:D1-orthogonal-to-vn} of dimension $n$. By the inductive hypothesis, 
$\langle \v_1, \dots, \v_{n-2} \rangle = \langle \w_1, \dots, \w_{n-2}\rangle$ and $\v_{n-1} = \w_{n-1}$. Thus, we have that $\langle \v_1, \dots, \v_{n-2}, \v_{n-1} \rangle = \langle \w_1, \dots, \w_{n-2}, \w_{n-1}\rangle$ and so property~\eqref{property:D1-span} is satisfied. It remains to verify property~\eqref{property:D1-assoc-matrix-pos-def}. 
We note that since $\v_i\cdot \v_n =0$, the associated matrix of $C_i$ according to \autoref{def:associated-matrix} satisfies
\[ M_{n}(C_i)= \begin{bmatrix} & & & 0\\ & M_{n-1}(
\mathfrak{k}_i) & & \vdots \\ & & & 0\\ 0 & \dots & 0 & 1 \end{bmatrix}. \]
By the inductive hypothesis, there exist $\epsilon_1, \dots, \epsilon_{n-1} \in \{\pm 1\}$ such that the left-hand-side of the equation in property~\eqref{property:D1-assoc-matrix-pos-def} can be written as
\begin{align*}
\x^T M_n(C_i) \x 
&=  \begin{bmatrix} x_1 \cdots x_{n-1} \end{bmatrix} M_{n-1}(\mathfrak{k}_i)\begin{bmatrix} x_1 \\ \vdots \\ x_{n-1} \end{bmatrix}  + x_n^2\\
     %& = \left\lVert\begin{bmatrix}\epsilon_1\v_1, \dots, \epsilon_{n-1}\v_{n-1}\end{bmatrix} \begin{bmatrix} x_1 \\ \vdots \\ x_{n-1} \end{bmatrix} \right\rVert^2 + x_n^2\\
     & = \left\lVert \epsilon_1x_1\v_1+\dots+\epsilon_{n-1}x_{n-1}\v_{n-1}\right\rVert^2+x_n^2 \\
     & = \left\lVert\begin{bmatrix}\epsilon_1\v_1, \dots, \epsilon_{n-1}\v_{n-1}, \epsilon_n\v_n\end{bmatrix} \x\right\rVert^2, 
\end{align*}
which is equal to its right-hand side, where we set $\epsilon_n = 1$.
Thus, property~\eqref{property:D1-assoc-matrix-pos-def} holds.

%By the inductive hypothesis, there exist $\epsilon_1, \dots, \epsilon_{n-1} \in \{\pm 1\}$ such that the left-hand-side $\x^T M_n(C_i) \x$ of the equation in property~\eqref{property:D1-assoc-matrix-pos-def} can be written as \[\begin{bmatrix} x_1 \cdots x_{n-1} \end{bmatrix} M_{n-1}(\mathfrak{k}_i)\begin{bmatrix} x_1 \\ \vdots \\ x_{n-1} \end{bmatrix}  + x_n^2 = \left\lVert \epsilon_1x_1\v_1+\dots+\epsilon_{n-1}x_{n-1}\v_{n-1}\right\rVert^2+x_n^2 \] which is equal to its right-hand side $\left\lVert\begin{bmatrix}\epsilon_1\v_1, \dots, \epsilon_{n-1}\v_{n-1}, \epsilon_n\v_n\end{bmatrix} \x\right\rVert^2$, where we set $\epsilon_n = 1$.

\smallskip 
\textbf{Case 2}: Suppose that $\w_n$ is not orthogonal to all $\w_1, \dots, \w_{n-1}$.
Without loss of generality, we may assume that $\w_{n-1} \cdot \w_n \neq 0$. 

We construct a hyperplane $L$ in order to apply Brion--Vergne decomposition \autoref{thm:BV-wrt-hyperplane} as follows. For $1\leq i\leq n-2$, we define 
\begin{equation}\label{eq:li}
\ve{\ell}_i := \w_i - \frac{\w_i \cdot \w_n}{\w_{n-1} \cdot \w_n} \w_{n-1}.
\end{equation}
We remark that $\ve{\ell}_i \cdot \w_n = 0$ for all $i=1, 2,\dots, n-2$.
Let 
\[L := \langle\ve{\ell}_1, \dots, \ve{\ell}_{n-2}, \w_n\rangle.\]
This $L$ is a hyperplane of $\R^n$, since if there are scalars $\lambda_i$'s making \[0=\sum_{i=1}^{n-2} \lambda_i \ve{\ell}_i+\lambda_n \w_n = \sum_{i=1}^{n-2} \lambda_i \w_i - \sum_{i=1}^{n-2} \lambda_i \frac{\w_i \cdot {\w_n}}{\w_{n-1} \cdot {\w_n}} \w_{n-1} + \lambda_n {\w_n},\]
then the linear independence of the $\w_i$'s implies each $\lambda_i =0$. 

Next, we decide for each $i$ whether $\w_i \in L$, and we use $s_i = -1$ or $1$
to indicate which open half-space that $\w_i$ belongs to.  It is clear that $\w_n \in L$. %We let $s_n=0$, accordingly. 
We also know that $\w_{n-1} \not\in L$, because otherwise $L$ would contain all $\w_1, \dots, \w_n$, a contradiction. We set $s_{n-1}=1$.
For $1\leq i\leq n-2$, if $\w_i\cdot \w_n=0$, then it follows from equation~\eqref{eq:li} that $\w_i = \ve{\ell}_i \in L$; %, and we let $s_i =0$; 
otherwise, $\w_i\cdot \w_n\neq 0$, then we have $\w_i \not\in L$ since
\begin{equation} \label{eq:wi}
\w_i = \ve{\ell}_i + \left(\frac{\w_i \cdot \w_n}{\w_{n-1} \cdot \w_n}\right) \w_{n-1}
\end{equation}
again by equation~\eqref{eq:li}.
We let accordingly 
\begin{equation*}
s_i =
\begin{cases}
1, & \text{if $\w_i$ and $\w_{n-1}$ are on the same side of $L$}\\
-1, & \text{if $\w_i$ and $\w_{n-1}$ are on the opposite sides of $L$.}
\end{cases}
\end{equation*}
Denote 
\[
I^+ = \{1\leq i\leq n-1 \st s_i =1\} \text{ and } I^- = \{1\leq i\leq n-1 \st s_i = -1\}.
\]
We remark that $I^- \cup I^+$ is the set of $i$ such that $\w_i \in \R^n \setminus L$, or equivalently, the set of $1\leq i\leq n-1$ such that $\w_i \cdot \w_n \neq 0$.
%\begin{align*}
%    I^+ &= \{1\leq i\leq n-1 \st s_i =1\} \\
%    I^- &= \{1\leq i\leq n-1 \st s_i = -1\} \\
%    I^\pm &= I^+\cup I^- = \{1\leq i\leq n-1 \st \w_i \cdot\w_n \neq 0\}.
%\end{align*}

By \autoref{thm:BV-wrt-hyperplane}, modulo indicator functions of cones containing lines, we can decompose the cone $C = \cf(\w_1, \w_2, \dots, \w_n)$ as
 \begin{equation*} 
    [C] \equiv \sum_{i \in I^+} [\Rr_+\w_i + \rho_{i}(C)] - \sum_{i \in I^-}[\Rr_+(-\w_i) + \rho_{i}(C)],
\end{equation*}
which can be rewritten as
 \begin{equation} \label{eq:rephrase-BV-statement}
    [C] \equiv \sum_{i \in I^+ \cup I^-} s_i \left[\R_+ (s_i \w_i) + \rho_i(C)\right].
\end{equation}
Therefore, it suffices to show that every cone on the right-hand side of~\eqref{eq:rephrase-BV-statement} can be decomposed into a finite family of cones of form~\eqref{Form 1} or \eqref{Form 2}. 

To this end, we let  $i\in I^+\cup I^-$ and we consider the cone 
\[\cf_i := \R_+ (s_i \w_i) + \rho_i(C).\]
%that is, for $i$ such that $\w_i \in \R^n \setminus L$, or equivalently, for $1\leq i\leq n-1$ such that $\w_i \cdot \w_n \neq 0$.
According to \autoref{def:notations},
\[\rho_i(C) = \cf\left(\rho_{i}({\w_1}), \dots, \rho_i(\w_{i-1}), \, \rho_i(\w_{i+1}), \dots, \rho_i({\w_n})\right).\]
Since ${\w_n}\in L$, we have $\rho_i({\w_n}) = {\w_n}$. Thus,
\begin{equation} \label{eq:cone-in-decomp-1-after-BV}
    \cf_{i} =\cf\left(s_i \w_i, \, \rho_{i}({\w_1}), \dots, \rho_i(\w_{i-1}), \, \rho_i(\w_{i+1}), \dots, \rho_i({\w_{n-1}}), \, \w_n\right).
\end{equation} 
Next, we determine the projections that arise on the right-hand side of equation~\eqref{eq:cone-in-decomp-1-after-BV}, by distinguishing the cases $i=n-1$ and $i\neq n-1$.
In the former case where $i=n-1$, it follows from equation~\eqref{eq:wi} that $\rho_i({\w_k}) = {\ve{\ell}_k}$ for all $1\leq k\leq n-2$. Therefore, equation~\eqref{eq:cone-in-decomp-1-after-BV} simplifies to
\begin{align*}\label{eq:cone-in-decomp-1-case1}
\cf_{n-1} 
&=  \cf(s_{n-1}\w_{n-1}, \ve{\ell}_1, \ve{\ell}_2, \dots, \ve{\ell}_{n-2}, {\w_n}) \nonumber\\
&=  \cf(s_{n-1}\w_{n-1}, {\w_1} - \frac{{\w_1}\cdot {\w_n}}{\w_{n-1}\cdot {\w_n}}\w_{n-1}, \dots,  \w_{n-2} - \frac{\w_{n-2}\cdot {\w_n}}{\w_{n-1}\cdot {\w_n}}\w_{n-1}, {\w_n}).
\end{align*}
In the latter case where $i\leq n-2$ and $\w_i \cdot \w_n \neq 0$, we have by equation~\eqref{eq:wi} 
\[\w_{n-1} = \frac{\w_{n-1} \cdot {\w_n}}{\w_i \cdot {\w_n}}(\w_i-\ve{\ell}_i),\]
so
\begin{align} \label{eq:rho_w_n-1}
\rho_i(\w_{n-1}) &= - \left(\frac{\w_{n-1} \cdot {\w_n}}{\w_i \cdot {\w_n}}\right) \ve{\ell}_i
    = - \left(\frac{\w_{n-1} \cdot {\w_n}}{\w_i \cdot {\w_n}}\right) \left(\w_i - \frac{\w_i \cdot \w_n}{\w_{n-1} \cdot \w_n} \w_{n-1}\right) \nonumber\\
    &= \w_{n-1} - \left(\frac{\w_{n-1} \cdot {\w_n}}{\w_i \cdot {\w_n}}\right) \w_i.
\end{align}
For $1\leq k \leq n-2$ such that $k\neq i$, we have
   \[ \w_{k} = \ve{\ell}_k + \frac{\w_{k} \cdot {\w_n}}{\w_{n-1} \cdot {\w_n}}\w_{n-1} = \ve{\ell}_k + \frac{\w_{k} \cdot {\w_n}}{\w_{n-1} \cdot {\w_n}}\cdot\frac{\w_{n-1} \cdot {\w_n}}{\w_i \cdot {\w_n}}(\w_i-\ve{\ell}_i).\]
It follows from \eqref{eq:li} that 
 \begin{equation} \label{eq:rho_wk}
    \rho_i(\w_{k}) =\ve{\ell}_k - \frac{\w_{k} \cdot {\w_n}}{\w_i \cdot {\w_n}}\ve{\ell}_i = \w_{k} - \frac{\w_{k}\cdot {\w_n}}{\w_i \cdot {\w_n}}\w_i.  
 \end{equation}
Using \eqref{eq:rho_w_n-1} and \eqref{eq:rho_wk}, equation~\eqref{eq:cone-in-decomp-1-after-BV} simplifies to 
\begin{equation*}\label{eq:cone-in-decomp-1-case2}
\cf_{i} =  \cf\left(s_i \w_i, \left\{\w_{k} - \frac{\w_{k}\cdot {\w_n}}{\w_i \cdot {\w_n}}\w_i 
\st 1\leq k\leq n-1, k\neq i\right\}, {\w_n}\right).    
\end{equation*}
We observe that the generators of $\cf_i$ in the latter case agree with those of $\cf_{n-1}$ in the former case, by setting $i=n-1$.
%in either case, the cone $\cf_i$ is always generated by  $s_i \w_i, \w_n$ and $\w_{k} - \frac{\w_{k}\cdot {\w_n}}{\w_i \cdot {\w_n}}\w_i$ for $1\leq k\leq n-1$ such that $k\neq i$.

Therefore, we can rewrite the cone $\cf_i$ %on the right-hand side of~\eqref{eq:rephrase-BV-statement} 
for any $i \in I^+\cup I^-$ in the form of
\[\cf_i=\cf\left(\frac{\ve{u}_1}{\|\ve{u}_1\|}, \dots, \frac{\ve{u}_{n-2}}{\|\ve{u}_{n-2}\|}, \ve{u}_{n-1}, \ve{w}_n\right),\]
where
\begin{align} 
    \{\ve{u}_1, \dots, \ve{u}_{n-2} \} &= \left\{\w_{k} - \frac{\w_{k}\cdot {\w_n}}{\w_i \cdot {\w_n}}\w_i \st 1\leq k\leq n-1, k\neq i\right\} \label{eq:u-and-w}\\
    \ve{u}_{n-1} &= s_i \w_i \label{eq:u-and-w-last}
\end{align}

If the cone $\cf_i$ is already of the form~\eqref{Form 1} or \eqref{Form 2}, then no further decomposition is needed. Otherwise, we apply the inductive hypothesis on the $(n-1)$-dimensional cone 
\[\cf\left(\frac{\ve{u}_1}{\|\ve{u}_1\|}, \dots, \frac{\ve{u}_{n-2}}{\|\ve{u}_{n-2}\|}, \ve{u}_{n-1}\right)\]
to get its (signed) decomposition into cones $\mathfrak{k}_{1}, \dots, \mathfrak{k}_{M}$ that are of the form~\eqref{Form 1} or \eqref{Form 2}. That is, each cone $\mathfrak{k}_m := \cf(\v_1, \dots, \v_{n-1})$ from $\{\mathfrak{k}_{1}, \dots, \mathfrak{k}_{M}\}$ either \eqref{Form 1} contains lines, or \eqref{Form 2} satisfies
    \begin{enumerate}[\rm(i)]
        \item \label{prop:a} $\|\v_j\| = 1$ for $j=1, 2,\dots, n-1$
        \item \label{prop:b} $\v_{n-1} = \ve{u}_{n-1}$
        \item \label{prop:c} $\langle \v_1, \v_2,\dots, \v_{n-2} \rangle = \langle \ve{u}_1, \ve{u}_2,\dots, \ve{u}_{n-2} \rangle$
        \item \label{prop:d} $\v_j \cdot \v_{n-1} =0$ for $j = 1, 2,\dots, n-3$
        \item \label{prop:e} 
        $\x^T M_{n-1}(\mathfrak{k}_m)\, \x = \left\lVert\begin{bmatrix}\epsilon_1\v_1, \dots, \epsilon_{n-1}\v_{n-1}\end{bmatrix} \x\right\rVert^2$ holds for every $\x\in \Rr^{n-1}$, where $\epsilon_j = \pm 1$.
    \end{enumerate}
Let 
\[C_m = \mathfrak{k}_m + \cf(\w_n) = \cf(\v_1, \dots, \v_{n-1}, \w_n)\] 
be the $n$-dimensional cone obtained by appending $\w_n$ to the generators of $\mathfrak{k}_m$. Then, the cones $C_1, C_2, \dots, C_M$ give a decomposition of the cone $\cf_i$ on the right-hand side of~\eqref{eq:rephrase-BV-statement}. It is clear that if $\mathfrak{k}_m$ contain lines, then $C_m$ also contains lines, so it is of the form~\eqref{Form 1}.  It remains to show that $C_m=\cf(\v_1, \dots, \v_{n-1}, \w_n)$ is of the form~\eqref{Form 2}, given that $\mathfrak{k}_m=\cf(\v_1, \dots, \v_{n-1})$ satisfies properties~\eqref{prop:a}--\eqref{prop:e}.

We set $\v_n = \w_n$. Then, $C_m=\cf(\v_1, \dots, \v_{n-1}, \v_n)$ clearly satisfies property~\eqref{property:D1-norm-is-1} because of \eqref{prop:a}, and property~\eqref{property:D1-vn=wn}. Property~\eqref{property:D1-span} holds because 
\[
\langle \v_1, \dots,  \v_{n-2}, \v_{n-1} \rangle = 
\langle \ve{u}_1, \dots, \ve{u}_{n-2}, \ve{u}_{n-1} \rangle =
\langle \w_1, \dots, \w_{n-2}, \w_{n-1} \rangle,
\]
where the first equality follows from properties~\eqref{prop:b} and \eqref{prop:c}, and the second equality follows from equations~\eqref{eq:u-and-w} and \eqref{eq:u-and-w-last}.
For any $1\leq k \leq n-1$, $k\neq i$, we have 
\[
\left(\w_{k} - \frac{\w_{k}\cdot {\w_n}}{\w_i \cdot {\w_n}}\w_i \right)\cdot \w_n = \w_{k}\cdot {\w_n} - \frac{\w_{k}\cdot {\w_n}}{\w_i \cdot {\w_n}}\, \w_i \cdot {\w_n}=  0.
\]
Thus, $\w_n$ is orthogonal to all $\ve{u}_1, \dots, \ve{u}_{n-2}$ by \eqref{eq:u-and-w}. Property~\eqref{property:D1-orthogonal-to-vn} holds because $\ve{v}_n = \w_n$ is also orthogonal to all $\ve{v}_1, \dots, \ve{v}_{n-2}$ by property~\eqref{prop:c}. %Finally, we will show that property~\eqref{property:D1-assoc-matrix-pos-def} holds as well. 
Using property~\eqref{property:D1-orthogonal-to-vn}, the associated matrix of $C_m$ can be written in relation to the associated matrix of $\mathfrak{k}_m$, as

\[
M_{n}(C_m)= 
\begin{bmatrix} 
\block(4,4){M_{n-1}(\mathfrak{k}_m)} & 0\\ 
& & & & \vdots \\
& & & & 0 \\& & & &  -|\v_{n-1}\cdot \v_n|\\ 
0 & \dots & 0 & -|\v_{n-1}\cdot \v_n| & 1
\end{bmatrix}.
\]
Let vector $\ve{x}=\vect{x_1, \dots, x_n}\in\R^n$. Then,
\begin{equation*}\label{eq:pos-def-mn}
\x^T M_n(C_m) \x = \begin{bmatrix} x_1 \cdots x_{n-1} \end{bmatrix} M_{n-1}(\mathfrak{k}_m)\begin{bmatrix} x_1 \\ \vdots \\ x_{n-1} \end{bmatrix}  -2|\v_{n-1}\cdot \v_n|\, x_{n-1}x_n + x_n^2.
\end{equation*}
By property~\eqref{prop:e}, for the appropriate choice of $\epsilon_1, \dots, \epsilon_{n-1} \in \{\pm 1\}$,
%\[\begin{bmatrix} x_1 \cdots x_{n-1} \end{bmatrix} M_{n-1}(\mathfrak{k}_m)\begin{bmatrix} x_1\\ \vdots \\ x_{n-1} \end{bmatrix} = \]
the first term on the right-hand side of the above equation %\eqref{eq:pos-def-mn} 
is equal to 
\[\left\lVert \epsilon_1x_1\v_1+\dots+\epsilon_{n-1}x_{n-1}\v_{n-1}\right\rVert^2.\]
By setting
\begin{equation*}
\epsilon_n =
\begin{cases}
-1, & \text{if $\v_{n-1}\cdot \v_{n}$ and $\epsilon_{n-1}$ have the same sign}\\
1, & \text{otherwise},
\end{cases}
\end{equation*}
we have that $-|\v_{n-1}\cdot \v_n| = \epsilon_{n-1} \epsilon_n\,(\v_{n-1}\cdot \v_n)$, and hence
\begin{align*}
&\x^T M_n(C_m) \x \\
 = & \left\lVert \epsilon_1x_1\v_1+\dots+\epsilon_{n-1}x_{n-1}\v_{n-1}\right\rVert^2 +  2 \epsilon_{n-1} \epsilon_n \,(\v_{n-1}\cdot \v_n) x_{n-1}x_n + \epsilon_n^ 2 x_n^2 \\
= & \left\lVert\begin{bmatrix}\epsilon_1\v_1, \dots, \epsilon_{n-1}\v_{n-1}, \epsilon_n\v_n\end{bmatrix} \x\right\rVert^2.
\end{align*}
This implies that property~\eqref{property:D1-assoc-matrix-pos-def} holds as well. We showed that in Case 2, every cone obtained via B--V decomposition can be further decomposed into a finite family of cones, each of which either contains lines or has an associated matrix that is positive definite. This concludes the proof of the theorem.
\end{proof}

\begin{corollary} \label{cor:decomp-1}
%Let $\v_1, \dots, \v_n$ form a basis of $\Rr^n$ and let cone $C=\cf(\v_1, \dots, \v_n)$. Then, 
The decomposition in \autoref{thm:decomp-1} allows for the computation of the normalized solid angle of a simplicial cone $C \in \R^n$ via the power series~\eqref{eq:t-alpha-series}.
\end{corollary}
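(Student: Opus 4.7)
My plan is an induction on the dimension $n$, combining \autoref{thm:decomp-1} with \autoref{thm:ribando2006} for the Form~\eqref{Form 2} pieces and \autoref{cor:project-out-linear-subspaces} for the Form~\eqref{Form 1} pieces. The base case $n=2$ is immediate: the proof of \autoref{thm:decomp-1} shows every two-dimensional simplicial cone is already of Form~\eqref{Form 2}, so its associated matrix is positive definite and \autoref{thm:ribando2006} expresses $\tilde\Omega_2(C)$ as the convergent series~\eqref{eq:t-alpha-series}.

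For the inductive step with $n\ge 3$, I would apply \autoref{thm:decomp-1} to the given simplicial cone $C\subseteq\R^n$ to obtain a finite signed decomposition $[C] = \sum_{i=1}^{k} s_i [C_i]$ in which each $C_i$ is either of Form~\eqref{Form 1} or of Form~\eqref{Form 2}. Integrating $e^{-\|\x\|^2}$ against both sides (valid because the overlaps between the $C_i$'s are Lebesgue-negligible and the integrand is absolutely integrable) and normalizing by $\pi^{n/2}$ as in~\eqref{eq:solid-angle-exp}, one obtains the additivity
\[
\tilde\Omega_n(C) \;=\; \sum_{i=1}^{k} s_i\, \tilde\Omega_n(C_i).
\]
For each $C_i$ of Form~\eqref{Form 2}, positive-definiteness of its associated matrix allows \autoref{thm:ribando2006} to compute $\tilde\Omega_n(C_i)$ by~\eqref{eq:t-alpha-series}. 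For each $C_i$ of Form~\eqref{Form 1}, I set $L_i := C_i \cap (-C_i)$ (the lineality space) and let $C_i'$ be the image of $C_i$ under orthogonal projection onto $L_i^\perp$; since $C_i = C_i' \oplus L_i$ is an orthogonal sum, \autoref{cor:project-out-linear-subspaces} gives $\tilde\Omega_n(C_i) = \tilde\Omega_{d_i'}(C_i')$ with $d_i' = n - \dim L_i < n$. Triangulating $C_i'$ into simplicial subcones as justified in \autoref{rmk:simplicial_and_fulldimensonal} and invoking the inductive hypothesis on each, the quantity $\tilde\Omega_{d_i'}(C_i')$ is itself expressible as a finite signed sum of convergent evaluations of~\eqref{eq:t-alpha-series}.

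The step that deserves the most care, and that I expect to be the main technical obstacle, is justifying that the signed indicator-function identity produced by \autoref{thm:decomp-1} may be integrated termwise against $e^{-\|\x\|^2}$: namely, that the Brion--Vergne ``modulo cones containing lines'' relation used inside the proof of that theorem has been fully resolved by listing the residual line-containing cones explicitly as Form~\eqref{Form 1} summands, and that the boundary overlaps among the $C_i$'s are of measure zero. Once this is in hand, termination of the recursion is automatic because every descent into a Form~\eqref{Form 1} cone strictly reduces the working dimension, so within at most $n-1$ levels of recursion every cone that remains is of Form~\eqref{Form 2}, and the full value of $\tilde\Omega_n(C)$ is realized as a finite $\pm 1$-weighted sum of convergent power series of the form~\eqref{eq:t-alpha-series}.
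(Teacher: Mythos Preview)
Your proposal is correct and follows essentially the same route as the paper: apply \autoref{thm:decomp-1}, pass to solid-angle measures by additivity, handle Form~\eqref{Form 2} pieces via \autoref{thm:ribando2006}, and handle Form~\eqref{Form 1} pieces via \autoref{cor:project-out-linear-subspaces} followed by the inductive hypothesis in a strictly smaller dimension. One small correction to your justification: in a \emph{signed} decomposition the cones $C_i$ may well overlap on sets of positive measure (the signs are precisely what effect the cancellation), so the reason you may integrate $e^{-\|\x\|^2}$ termwise is not that overlaps are Lebesgue-negligible, but simply that the indicator-function identity $[C]=\sum_i s_i[C_i]$ holds pointwise almost everywhere and the Gaussian is in $L^1(\R^n)$.
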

\begin{proof}
Suppose \autoref{thm:decomp-1} yields the decomposition $C_1, \dots, C_N$ of $C$ such that
$[C] = \sum_{i=1}^N s_i [C_i]$, where $s_i = \pm 1$.
Then 
\[\tilde{\Omega}_n(C) = \sum_{i=1}^N s_i \tilde{\Omega}_n(C_i).\]
For each $C_i$ in the decomposition, if it is of the form~\eqref{Form 1}, then we can apply \autoref{cor:project-out-linear-subspaces} to reduce it to a lower-dimensional problem; if it is of the form~\eqref{Form 2}, then \autoref{thm:ribando2006} applies. By induction on the dimension, each $\tilde{\Omega}_n(C_i)$ can be computed.
\end{proof}

We note that in order to compute the solid angle measure using \autoref{thm:decomp-1}, one must determine the cones that contain lines and take orthogonal projections to reduce dimension, which is not immediately obvious. %\tblue{Not obvious what is meant by ``not immediately obvious''.}
As such, we present another decomposition method in the following section. 

\section{Second decomposition method}
\label{sec:decomp-lower-dim-cones}
The decomposition \autoref{thm:decomp-2} %and \tblue{Corollary 3.4} 
in this section gives a more direct way to compute the normalized solid angle measure of a simplicial cone. %\tblue{more to corollary} 
The resulting cones are either lower-dimensional cones or full-dimensional simplicial cones. In the former case, the cones have normalized solid angle measure 0. In the latter, the generators of the cones are explicitly given. This allows us to determine the normalized solid angle measure of the original cone, according to \autoref{cor:decomp-2}.

\subsection{Brion--Vergne decomposition with respect to a line}
We will make use another variant of Brion--Vergne (B--V) decomposition (see \cite{brion1997residue}, \cite{baldoni2013intermediate}), which is with respect to a one-dimensional subspace. Each cone in the decomposition will contain a generator along the given one-dimensional subspace. This B--V decomposition is reformulated below.

\begin{theorem}[{Brion--Vergne decomposition with respect to a line} {\cite[Proposition 15a]{baldoni2013intermediate}, rephrased}] 
\label{thm:BV-wrt-line}
Let $\w_1, \dots, \w_n$ form a basis of $\Rr^n$ and let cone $C =\cf({\w_1}, \dots, {\w_n})$. 
%Let $\ve{r} = \sum_{i=1}^n \lambda_i\w_i$ be a non-zero vector in $\R^n$. 
Given a non-zero vector $\ve{r}=\sum_{i=1}^n \lambda_i\w_i \in \Rr^n$, let $\delta_i\in \{-1, 0, 1\}$ denote the sign of $\lambda_i$ for $1\leq i\leq n$.
Then, the following relation holds, modulo indicator functions of cones containing lines.
\begin{equation*} \label{eq:bv-wrt-line}
[C] \equiv \sum_{i : \delta_i \neq 0} s_i\left[\cf\left(\epsilon_{i,1}\w_1, \dots, \epsilon_{i,i-1}\w_{i-1}, \epsilon_{i,i+1}\w_{i+1}, \dots ,\epsilon_{i,n}\w_n, \delta_i\ve{r}\right)\right],
\end{equation*}
where $s_i, \epsilon_{i,k} \in \{\pm 1\}$ for $1\leq k \leq n, k\neq i$ are given by
\begin{align}
    s_i &= \begin{cases}
        (-1)^{\mathbf{card}\left(\{1 \leq j < i\,\st\, \delta_j = 1\}\right)}, & \text{if } \delta_i = 1\\
        (-1)^{\mathbf{card}\left(\{i < j \leq n\,\st\, \delta_j = -1\}\right)}, & \text{if } \delta_i = -1
        \end{cases} \label{eq:si}\\ 
    \epsilon_{i,k} &= \begin{cases}
        -1, & \text{if } \delta_i = \delta_k = 1 \text{ and } k<i,\text{ or if } \delta_i = \delta_k = -1 \text{ and } k>i\\
       1, & \text{otherwise}
        \end{cases}.  \label{eq:epsik}
\end{align}
\end{theorem}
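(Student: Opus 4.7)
The plan is to verify the identity via the Laplace valuation on polyhedral indicator functions. For a polyhedron $P\subseteq\R^n$, set $L_P(\xi)=\int_P e^{-\xi\cdot x}\,dx$, extended as a meromorphic function of $\xi$. This assignment is a group-valued valuation, and crucially it vanishes identically on indicator functions of cones containing lines (a classical result of Lawrence, Khovanskii--Pukhlikov, and Brion). Hence, to prove the claimed identity modulo cones containing lines, it suffices to prove the equality of rational functions $L_C(\xi)=\sum_{i:\delta_i\neq 0}s_i\,L_{\cf_i}(\xi)$, where $\cf_i$ denotes the $i$-th cone on the right-hand side.

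For a simplicial cone $\cf(\ve{v}_1,\dots,\ve{v}_n)$, one has $L(\xi)=|\det V|/\prod_j(\xi\cdot\ve{v}_j)$ with $V=[\ve{v}_1,\ldots,\ve{v}_n]$. Applying multilinearity of the determinant together with $\ve{r}=\sum_j\lambda_j\w_j$ to the matrix with columns $\{\epsilon_{i,k}\w_k\}_{k\neq i}$ and $\delta_i\ve{r}$, only the $\lambda_i\w_i$ summand of $\ve{r}$ contributes a nonzero determinant, yielding
\begin{equation*}
\det(\,\cdots\,)=\Bigl(\prod_{k\neq i}\epsilon_{i,k}\Bigr)\,\delta_i\,(-1)^{n-i}\,\lambda_i\,\det W,
\end{equation*}
with $W=[\w_1,\ldots,\w_n]$; in particular its absolute value is $|\lambda_i|\cdot|\det W|$. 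Plugging this into the Laplace-transform formula, clearing the common factor $|\det W|/(\prod_k(\xi\cdot\w_k)\cdot(\xi\cdot\ve{r}))$, and using $\xi\cdot\ve{r}=\sum_j\lambda_j\,(\xi\cdot\w_j)$, the identity becomes
\begin{equation*}
\sum_{i:\delta_i\neq 0}\frac{s_i\,|\lambda_i|\,(\xi\cdot\w_i)}{(\prod_{k\neq i}\epsilon_{i,k})\,\delta_i}\;=\;\sum_{i=1}^n\lambda_i\,(\xi\cdot\w_i).
\end{equation*}
Matching the coefficient of $\xi\cdot\w_i$ on both sides (using $\delta_i\lambda_i=|\lambda_i|$, and noting that both coefficients vanish when $\lambda_i=0$) reduces the theorem to the purely combinatorial sign identity $s_i=\prod_{k\neq i}\epsilon_{i,k}$ for each $i$ with $\delta_i\neq 0$.

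The remaining task is to verify this sign identity directly from the definitions. When $\delta_i=+1$, one has $\epsilon_{i,k}=-1$ precisely when $\delta_k=+1$ and $k<i$, giving $\prod_{k\neq i}\epsilon_{i,k}=(-1)^{\#\{j<i\,:\,\delta_j=1\}}=s_i$. When $\delta_i=-1$, the analogous count yields $\prod_{k\neq i}\epsilon_{i,k}=(-1)^{\#\{j>i\,:\,\delta_j=-1\}}=s_i$. Indices $k$ with $\delta_k=0$ always contribute $\epsilon_{i,k}=+1$ and are inert. The main obstacle in this strategy is the appeal to the valuation-theoretic fact that the Laplace transform vanishes on cones containing lines; I would cite standard sources rather than reprove it. An elementary alternative, should one wish to avoid valuation theory, is to proceed by induction on $n$ using the hyperplane decomposition \autoref{thm:BV-wrt-hyperplane}, choosing successive hyperplanes that each contain $\ve{r}$ and tracking the resulting sign data to match the explicit formulas for $s_i$ and $\epsilon_{i,k}$.
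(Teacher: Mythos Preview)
The paper does not give its own proof of this statement; it is quoted directly from \cite{baldoni2013intermediate}. So there is no ``paper's proof'' to compare against, and I will simply comment on your argument.

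Your Laplace-valuation strategy is the standard route to identities of this type, and your sign bookkeeping is correct: the determinant computation, the reduction to a partial-fraction identity in the linear forms $\xi\cdot\w_i$, and the verification that $s_i=\prod_{k\neq i}\epsilon_{i,k}$ all go through exactly as you wrote.

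There is, however, a genuine logical gap in the opening step. You write that $L$ ``vanishes identically on indicator functions of cones containing lines'' and conclude that ``to prove the claimed identity modulo cones containing lines, it suffices to prove the equality of rational functions.'' The first fact is the easy direction: it only tells you that $L$ factors through the quotient by such cones. What you actually need is the converse, namely that the induced map on the quotient is \emph{injective}: if $L\bigl([C]-\sum s_i[\cf_i]\bigr)=0$, then $[C]-\sum s_i[\cf_i]$ lies in the span of indicators of cones containing lines. This injectivity is a theorem in its own right (Lawrence; Khovanskii--Pukhlikov; see also Barvinok's exposition), and it is the nontrivial input, not the vanishing. Your closing remark mis-identifies which half of the valuation theory is doing the work. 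Moreover, with the continuous Laplace integral as you have defined it, lower-dimensional cones are also sent to zero, so without care the kernel is strictly larger than ``cones with lines''; one must either pass to the version of the valuation that integrates along the affine hull, or argue separately that no lower-dimensional discrepancy can occur here.

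None of this is fatal: once you cite the correct injectivity statement, the argument is complete and is essentially the approach used in the source you are rephrasing.
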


Recall that if $C=\cf(\w_1, \dots, \w_n)$ is a simplicial cone in $\R^n$ generated by some basis $\w_1, \dots, \w_n\in \R^n$, then its dual cone \[C^\ast := \{\ve{y}\in \R^n \st \ve{y}\cdot\ve{x} \geq 0 \; \forall \ve{x}\in C\}\] is also simplicial. The dual cone $C^\ast=\cf(\w^\ast_1\dots, \w^\ast_n)$ is generated by some dual basis $\w^\ast_1, \dots, \w_n^\ast\in \R^n$ such that for every $1\leq i, j\leq n$ and $j\neq i$, 
\begin{equation} \label{eq:dual-cone-generators}
    \w^\ast_i \cdot \w_i > 0 \text{ and  } \w^\ast_i \cdot \w_j = 0.
\end{equation}
Furthermore, the dual of the dual is the cone itself, i.e., $\left(C^\ast\right)^\ast = C$. The dual of a cone containing lines is a lower dimensional cone (i.e., a cone whose affine dimension is less than $n$).

By first passing to the dual cone, then applying Brion--Vergne decomposition \autoref{thm:BV-wrt-line} with respect to a particular line, and finally taking the dual again, we obtain the following decomposition which is modulo lower dimensional cones. Furthermore, each cone in the decomposition shares at least two generators $\w_i$ and $\w_n$ of the original cone.

\begin{theorem}
\label{thm:BV-mod-lower-dim-cone}
Let $\w_1, \dots, \w_n$ form a basis of $\Rr^n$ and let cone $C =\cf\left({\w_1}, \dots, {\w_n}\right)$. 
Let $\delta_i \in \{-1, 0, 1\}$ denote the sign of $\w_i \cdot \w_n$ for $1 \leq i \leq n-1$ and set $\delta_n = 0$. Suppose that the $\delta_i$'s are not all zero.
Then, the following relation holds, modulo indicator functions of lower dimensional cones.
\begin{equation*}
[C] \equiv \sum_{i : \delta_i \neq 0} s_i\left[\cf_i\right],
\end{equation*}
where each cone $\cf_i$ is generated by $\u_{i,k}$ $(1\leq k\leq n)$ defined as
\begin{equation} \label{eq:uik}
    \u_{i,k} = 
    \begin{cases}
     \w_k, & \text{if }\delta_k = 0  \text{ or } k=i \\
     \epsilon_{i,k} \left(\w_k - \frac{\w_k\cdot \w_n}{\w_i\cdot \w_n} \w_i \right), & \text{otherwise}
    \end{cases}
\end{equation}
and the signs $s_i, \epsilon_{i,k} \in \{\pm 1\}$ are given by equations~\eqref{eq:si} and \eqref{eq:epsik}, respectively.
\end{theorem}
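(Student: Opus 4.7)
The plan is to apply \autoref{thm:BV-wrt-line} with respect to a carefully chosen line to the dual cone $C^\ast$, and then dualize the resulting indicator-function identity back to the primal side. Polar duality exchanges cones containing lines with lower-dimensional cones (a full-dimensional cone has nontrivial lineality space iff its dual fails to be full-dimensional), so an equivalence modulo cones containing lines on the dual side becomes an equivalence modulo lower-dimensional cones on the primal side, which is exactly what the statement asks for.

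First I would pick the direction $\r \in \R^n$ defined by the $n$ linear conditions $\r \cdot \w_k = \w_k \cdot \w_n$ for $1 \leq k \leq n-1$ together with $\r \cdot \w_n = 0$. Since $\w_1, \dots, \w_n$ is a basis, $\r$ exists and is unique, and it is nonzero because the $\delta_k$ are not all zero by hypothesis. Let $\w_1^\ast, \dots, \w_n^\ast$ denote the dual basis generating $C^\ast$. Expanding $\r = \sum_{j=1}^n \lambda_j \w_j^\ast$ and taking dot products with $\w_j$ gives $\lambda_j = (\r \cdot \w_j)/(\w_j^\ast \cdot \w_j)$; since $\w_j^\ast \cdot \w_j > 0$, the sign of $\lambda_j$ equals $\delta_j$ for $j < n$ and $\lambda_n = 0$. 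Applying \autoref{thm:BV-wrt-line} to $C^\ast$ with this $\r$ therefore yields, modulo cones containing lines,
\[
[C^\ast] \equiv \sum_{i : \delta_i \neq 0} s_i \left[\cf\left(\epsilon_{i,1}\w_1^\ast, \dots, \widehat{\w_i^\ast}, \dots, \epsilon_{i,n}\w_n^\ast, \delta_i \r\right)\right],
\]
where $s_i$ and $\epsilon_{i,k}$ are given by the very formulas \eqref{eq:si}--\eqref{eq:epsik} that appear in the statement to be proved.

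Next I would dualize this identity cone-by-cone, using that $(C^\ast)^\ast = C$ and that each $\cf_i^\ast := \cf(\epsilon_{i,1}\w_1^\ast, \dots, \widehat{\w_i^\ast}, \dots, \epsilon_{i,n}\w_n^\ast, \delta_i \r)$ is a full-dimensional simplicial cone (its generators are linearly independent because $\r$ has nonzero $\w_i^\ast$-component). It remains to verify that the dual of $\cf_i^\ast$ is precisely the cone $\cf_i$ described by \eqref{eq:uik}. Each dual generator $\u_{i,k}$ is pinned down (up to a positive scalar fixed by a sign condition) by orthogonality to all generators of $\cf_i^\ast$ other than the $k$-th. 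Using $\w_m \cdot \w_j^\ast = 0$ for $m \neq j$ together with $\r \cdot \w_j = \w_j \cdot \w_n$ for $j < n$ and $\r \cdot \w_n = 0$, a short computation in the basis $\w_1, \dots, \w_n$ forces $\u_{i,k} \in \spann\{\w_i, \w_k\}$; imposing the remaining orthogonality to $\r$ and the sign constraint then yields $\u_{i,i} = \w_i$, $\u_{i,n} = \w_n$, and for $k \neq i, n$,
\[
\u_{i,k} = \epsilon_{i,k}\left(\w_k - \tfrac{\w_k \cdot \w_n}{\w_i \cdot \w_n}\w_i\right),
\]
which reduces to $\w_k$ when $\delta_k = 0$ since then $\w_k \cdot \w_n = 0$ and $\epsilon_{i,k} = 1$ by \eqref{eq:epsik}.

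The main obstacle I expect is justifying the dualization step at the level of indicator-function identities, since the polar map $C \mapsto C^\ast$ is a set operation, not a linear one on indicator functions per se. The argument relies on the standard valuation property of polar duality on the polyhedral cone algebra, namely that it descends to a linear bijection between the quotient by the ideal of cones containing lines and the quotient by the ideal of lower-dimensional cones; this is precisely the exchange behavior we need. Once that is granted, what remains is the bookkeeping of the signs $s_i$ and $\epsilon_{i,k}$ (inherited unchanged from \autoref{thm:BV-wrt-line}) and the dual-basis linear algebra outlined above.
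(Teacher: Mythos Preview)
Your proposal is correct and follows essentially the same route as the paper: apply \autoref{thm:BV-wrt-line} to the dual cone $C^\ast$ with respect to a line, then dualize back and identify the generators of each $\cf_i$. Your implicitly defined $\r$ (via $\r\cdot\w_k=\w_k\cdot\w_n$ for $k<n$ and $\r\cdot\w_n=0$) is exactly the paper's explicit vector $\r=\sum_{i=1}^{n-1}\frac{\w_i\cdot\w_n}{\w_i\cdot\w_i^\ast}\w_i^\ast$, and your verification of the dual generators matches the paper's check of conditions \eqref{eq:check-dual-u-1}--\eqref{eq:check-dual-u-2}; the dualization step you flag as the main obstacle is dispatched in the paper by the single assertion that linear identities among indicator functions of cones pass to their duals.
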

\begin{proof} %\tblue{what if r is zero?}
Suppose that the dual cone $C^\ast$ is generated by  $\w^\ast_1, \dots, \w_n^\ast\in \R^n$. Let 
\[
\ve{r}= \sum_{i=1}^{n-1} \left(\frac{\w_i \cdot \w_n}{\w_i\cdot \w^\ast_i}\right)\w^\ast_i.
\]
We note that the signs $\delta_i$ agree with those defined in \autoref{thm:BV-wrt-line}, since $\w_i\cdot \w^\ast_i>0$. 
We apply \autoref{thm:BV-wrt-line} to the dual cone $C^\ast = \cf\left(\w^\ast_1, \dots, \w^\ast_n\right)$, with respect to the non-zero vector $\ve{r}$, and obtain that
\begin{equation*} 
[C^\ast] \equiv \sum_{i : \delta_i \neq 0} s_i\left[\cf\left(\epsilon_{i,1}\w^\ast_1, \dots, \epsilon_{i,i-1}\w^\ast_{i-1}, \epsilon_{i,i+1}\w^\ast_{i+1}, \dots ,\epsilon_{i,n}\w^\ast_n, \delta_i\ve{r}\right)\right],
\end{equation*}
modulo indicator functions of cones containing lines.

Since linear identities that hold for indicator functions of cones also hold for their
duals, we have
\begin{equation*} 
[C] \equiv \sum_{i : \delta_i \neq 0} s_i\left[\cf\left(\epsilon_{i,1}\w^\ast_1, \dots, \epsilon_{i,i-1}\w^\ast_{i-1}, \epsilon_{i,i+1}\w^\ast_{i+1}, \dots ,\epsilon_{i,n}\w^\ast_n, \delta_i\ve{r}\right)^\ast \right],
\end{equation*}
modulo indicator functions of lower dimensional cones. % (with affine dimension less than $n$).

We now show that each dual cone %$\cf\left(\epsilon_{i,1}\w^\ast_1, \dots, \epsilon_{i,i-1}\w^\ast_{i-1}, \epsilon_{i,i+1}\w^\ast_{i+1}, \dots ,\epsilon_{i,n}\w^\ast_n, \delta_i\ve{r}\right)^\ast$ 
on the right-hand side of the above relation is generated by the $\u_{i,k}$'s that are defined in \eqref{eq:uik}. To this end, we verify that
\begin{equation}\label{eq:check-dual-u-1}
    \w_i \cdot \left(\delta_i\ve{r}\right) >0 \text{ and } \w_i \cdot \left(\epsilon_{i,j}\w_j^\ast\right) = 0 \; \forall j\neq i, 
\end{equation}
and that for any $k\neq i$,
\begin{equation}\label{eq:check-dual-u-2}
    \u_{i,k}\cdot\left(\delta_i\ve{r}\right)=0, \; 
    \u_{i,k}\cdot\left(\epsilon_{i,k}\w^\ast_k\right)>0 \text{ and }
    \u_{i,k}\cdot\left(\epsilon_{i,j}\w^\ast_j\right)=0 \; \forall j\neq i \text{ or }k.
\end{equation}
The conditions in \eqref{eq:check-dual-u-1} trivially hold because of \eqref{eq:dual-cone-generators}. 
To show \eqref{eq:check-dual-u-2} for $k\neq i$, we first consider the case where $\delta_k =0$, which implies that $\w_k \cdot \w_n=0$ and $\u_{i,k}=\w_k$ by \eqref{eq:uik}. We have 
\[\w_{k}\cdot\left(\epsilon_{i,k}\w^\ast_k\right)>0, \quad  \w_{k}\cdot\left(\epsilon_{i,j}\w^\ast_j\right)=0 \; \forall j\neq i \text{ or }k, \text{ and }\]
\begin{equation*}
   \w_{k}\cdot\left(\delta_i\ve{r}\right)= \w_{k}\cdot\left(\delta_i \sum_{j=1}^{n-1} \left(\frac{\w_j \cdot \w_n}{\w_j\cdot \w^\ast_j}\right) \w^\ast_j  \right) 
\end{equation*}
which is equal to $0$ when $k=n$, or equal to $\delta_i \left(\w_k \cdot \w_n\right)=0$ when $k\neq n$.
Now consider $k\neq i$ such that $\delta_k \neq 0$, which implies $k\neq n$. Since $\delta_i\neq 0$, we also know that $i\neq n$ and $\w_i \cdot \w_n\neq 0$.
Using \eqref{eq:uik}, we obtain that
\begin{align*}
\u_{i,k} \cdot (\delta_i \ve{r}) &= \epsilon_{i,k} \, \delta_i \left(\w_k - \frac{\w_k\cdot \w_n}{\w_i\cdot \w_n} \w_i \right)\cdot \sum_{j=1}^{n-1} \left(\frac{\w_j \cdot \w_n}{\w_j\cdot \w^\ast_j}\right) \w^\ast_j \\
%&=\epsilon_{i,k} \, \delta_i \left[\left(\frac{\w_k \cdot \w_n}{\w_k\cdot \w^\ast_k}\right) \left(\w_k \cdot \w^\ast_k\right) -\left( \frac{\w_k\cdot \w_n}{\w_i\cdot \w_n}\right) \left(\frac{\w_i \cdot \w_n}{\w_i\cdot \w^\ast_i}\right)\left(\w_i \cdot \w^\ast_i\right) \right] \\
&=\epsilon_{i,k} \, \delta_i \left[\frac{\w_k \cdot \w_n}{\w_k\cdot \w^\ast_k} \left(\w_k \cdot \w^\ast_k\right) - \frac{\w_k\cdot \w_n}{\w_i\cdot \w_n} \cdot \frac{\w_i \cdot \w_n}{\w_i\cdot \w^\ast_i}\left(\w_i \cdot \w^\ast_i\right) \right] = 0\\
\u_{i,k}\cdot\left(\epsilon_{i,k}\w^\ast_k\right) &= \epsilon_{i,k}^2 \left(\w_k - \frac{\w_k\cdot \w_n}{\w_i\cdot \w_n} \w_i \right) \cdot \w^\ast_k = \w_k \cdot \w^\ast_k > 0 \\
\u_{i,k} \cdot \left(\epsilon_{i,j}\w^\ast_j\right)&= \epsilon_{i,k}\, \epsilon_{i,j} \left(\w_k - \frac{\w_k\cdot \w_n}{\w_i\cdot \w_n} \w_i \right) \cdot \w^\ast_j = 0 \quad \quad \forall j\neq i\text{ or }k.
\end{align*}
Therefore, the conditions in \eqref{eq:check-dual-u-2} are all satisfied. The theorem holds.
\end{proof}

\subsection{Solid angle decomposition that includes lower dimensional cones} %into solid angles lying in Domain of Convergence}

We present another decomposition theorem for a given full-dimensional simplicial
cone whose solid angle measure is of interest. The theorem is analogue to \autoref{thm:decomp-1}. The resulting cones from \autoref{thm:decomp-2} are either lower dimensional and hence have $0$ as normalized solid angle measures, or have positive definite associated matrices so that \autoref{thm:ribando2006} applies.

\begin{theorem} \label{thm:decomp-2}
Given linearly independent unit vectors $\w_1, \w_2,\dots, \w_n \in \Rr^n$, the cone $\cf(\w_1, \w_2, \dots, \w_n)$ can be decomposed into a finite family of cones, each of which is either \begin{enumerate} [\rm(I)]
    \item \label{item-I} a cone of affine dimension less than $n$, or
    \item \label{item-II} a full-dimensional cone generated by some vectors $\v_1, \dots, \v_n$ that satisfy the properties~\eqref{property:D1-norm-is-1}--\eqref{property:D1-assoc-matrix-pos-def} in \autoref{thm:decomp-1} so that its associated matrix is positive definite.
\end{enumerate}
\end{theorem}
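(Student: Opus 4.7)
The plan is to mirror the proof of \autoref{thm:decomp-1} almost verbatim, replacing the hyperplane Brion--Vergne decomposition (\autoref{thm:BV-wrt-hyperplane}) by the line-version \autoref{thm:BV-mod-lower-dim-cone}, which drops lower-dimensional cones rather than cones containing lines---precisely matching form~\eqref{item-I} here. Proceed by induction on $n$. The base case $n=2$ is identical to that of \autoref{thm:decomp-1}: the cone $\cf(\w_1,\w_2)$ is already of form~\eqref{item-II} for a suitable choice of $\epsilon_1,\epsilon_2\in\{\pm 1\}$. For the inductive step, split into Case~1 when $\w_n$ is orthogonal to every $\w_i$ for $i<n$, and Case~2 otherwise. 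Case~1 is handled exactly as in the proof of \autoref{thm:decomp-1}: apply the inductive hypothesis to $\cf(\w_1,\dots,\w_{n-1})$ in its $(n-1)$-dimensional linear span and append $\w_n$ to each piece; the block-diagonal structure of the associated matrix immediately delivers properties~\eqref{property:D1-norm-is-1}--\eqref{property:D1-assoc-matrix-pos-def}, and any lower-dimensional piece remains lower-dimensional after appending $\w_n$.

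The substantive step is Case~2. Setting $\delta_i$ to the sign of $\w_i\cdot\w_n$ for $1\leq i\leq n-1$, apply \autoref{thm:BV-mod-lower-dim-cone} to obtain
\[
[C] \equiv \sum_{i:\delta_i\neq 0} s_i\,[\cf_i]
\quad\text{modulo lower-dimensional cones,}
\]
where each $\cf_i$ is generated by the vectors $\u_{i,k}$ from equation~\eqref{eq:uik}. The key structural observation is that $\u_{i,n}=\w_n$, $\u_{i,i}=\w_i$, and $\u_{i,k}\cdot\w_n=0$ for every $k\notin\{i,n\}$; the last orthogonality follows by a direct check from~\eqref{eq:uik}, with the subcases $\delta_k=0$ and $\delta_k\neq 0$ both yielding $\u_{i,k}\cdot\w_n=0$. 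Next, apply the inductive hypothesis to the $(n-1)$-dimensional cone generated (after normalization) by $\{\u_{i,k}/\|\u_{i,k}\|:k\neq n\}$, placing the already-unit vector $\u_{i,i}=\w_i$ in the last slot. This produces a signed decomposition into cones $\mathfrak{k}_m=\cf(\v_1,\dots,\v_{n-1})$ of form~\eqref{item-I} (in which case $C_m:=\mathfrak{k}_m+\cf(\w_n)$ still has affine dimension less than $n$) or of form~\eqref{item-II} with $\v_{n-1}=\w_i$ and $\langle\v_1,\dots,\v_{n-2}\rangle = \langle\u_{i,k}:k\neq i,n\rangle \subseteq \w_n^\perp$. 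Appending $\w_n$ and setting $\v_n=\w_n$ makes properties~\eqref{property:D1-norm-is-1}, \eqref{property:D1-vn=wn}, and \eqref{property:D1-orthogonal-to-vn} immediate; property~\eqref{property:D1-span} holds because the $\u_{i,k}$ for $k\neq n$ are linear combinations of $\w_1,\dots,\w_{n-1}$ and, being linearly independent, span the same $(n-1)$-dimensional subspace.

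The main technical hurdle is property~\eqref{property:D1-assoc-matrix-pos-def}. Thanks to the orthogonalities above, the associated matrix $M_n(C_m)$ has the same block shape as in Case~2 of the proof of \autoref{thm:decomp-1}: $M_{n-1}(\mathfrak{k}_m)$ in the upper-left block, a single coupling entry $-|\v_{n-1}\cdot\v_n|$ in positions $(n-1,n)$ and $(n,n-1)$, and $1$ in the $(n,n)$ entry. Choosing $\epsilon_n\in\{\pm 1\}$ so that $\epsilon_{n-1}\epsilon_n(\v_{n-1}\cdot\v_n)=-|\v_{n-1}\cdot\v_n|$ then rewrites $\x^T M_n(C_m)\x$ as $\|\sum_{j=1}^n\epsilon_j x_j\v_j\|^2$, establishing property~\eqref{property:D1-assoc-matrix-pos-def}. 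This sign-juggling step is the only non-mechanical ingredient, and it transfers verbatim from the proof of \autoref{thm:decomp-1}; I do not expect any genuinely new difficulty beyond correctly instantiating \autoref{thm:BV-mod-lower-dim-cone} with the right line.
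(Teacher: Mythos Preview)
Your proposal is correct and follows essentially the same route as the paper's own proof: induction on $n$, Case~1 handled verbatim from \autoref{thm:decomp-1}, and Case~2 via \autoref{thm:BV-mod-lower-dim-cone} followed by the inductive hypothesis on $\cf(\u_{i,k}:k\neq n)$ with $\w_i$ in the last slot, then appending $\w_n$ and reusing the block-structure/sign-choice argument for property~\eqref{property:D1-assoc-matrix-pos-def}. The only minor slip is your closing remark about ``instantiating \autoref{thm:BV-mod-lower-dim-cone} with the right line'': that theorem already has the specific line baked in (via the dual construction), so no further choice is needed.
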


\begin{proof}
    The proof is similar to that of \autoref{thm:decomp-1}.   
    We proceed by induction on the dimension. The base case $n=2$ is trivial (same proof as in \autoref{thm:decomp-1}). Suppose that the theorem holds for dimension $n-1$. Now consider the cone $\cf(\w_1, \w_2, \dots, \w_n)$ in dimension $n$ which is not already of the form~\eqref{Form 1} or \eqref{Form 2}.
    
    If $\w_n$ is orthogonal to all $\w_i$ for $1\leq i\leq n-1$, then the arguments in \textbf{Case 1} of the proof of \autoref{thm:decomp-1} (where we replace ``contain lines'' by ``of affine dimension less than $n$'') verbatim apply, so the statement is true for dimension $n$. 
    
    Next, we assume that $\w_n$ is not orthogonal to all $\w_1,\dots, \w_{n-1}$. Let $\delta_n =0$ and $\delta_i \in \{-1, 0, 1\}$ be the sign of $\w_i\cdot\w_n$ for $1\leq i\leq n$. Then, the $\delta_i$'s are not all zero. We apply \autoref{thm:BV-mod-lower-dim-cone}, and obtain the (signed) decomposition
\begin{equation}\label{eq:BV-in-decomp-2}
[C] \equiv \sum_{i : \delta_i \neq 0} s_i\left[\cf_i\right] \text{ modulo indicator functions of lower dimensional cones,}
\end{equation}
where each cone $\cf_i = \cf\left(\u_{i,1}, \u_{i,2}, \dots,\u_{i,n}\right)$ is generated by the $\u_{i,k}$'s according to equation~\eqref{eq:uik}. In particular, $\u_{i,i} = \w_i$ and $\u_{i,n}=\w_n$.
%and the signs $s_i, \epsilon_{i,k} \in \{\pm 1\}$ are given by equations~\eqref{eq:si} and \eqref{eq:epsik}, respectively. 
Thus, we can rewrite the cone $\cf_i$ for any $1\leq i \leq n-1$ such that $\delta_i\neq 0$ in the form of 
\[\cf_i = \cf\left(\u_1,\dots, \u_{n-2}, \u_{n-1}, \w_n \right)\] with $\{\ve{u}_1, \dots, \ve{u}_{n-2} \} = \left\{\frac{\u_{i,k}}{\|\u_{i,k}\|} \st 1\leq k\leq n-1, k\neq i\right\}$ and $\ve{u}_{n-1} = \w_i$.
%\begin{align*} 
%    \{\ve{u}_1, \dots, \ve{u}_{n-2} \} &= \left\{\frac{\u_{i,k}}{\|\u_{i,k}\|} \st 1\leq k\leq n-1, k\neq i\right\}\\
%    \ve{u}_{n-1} &= \w_i.
%\end{align*}

If the cone $\cf_i$ is already of the form~\eqref{Form 1} or \eqref{Form 2}, then no further decomposition is needed. Otherwise, by the inductive hypothesis, the $(n-1)$-dimensional cone 
$\cf\left(\ve{u}_1, \dots, \ve{u}_{n-2}, \ve{u}_{n-1}\right)$
can be decomposed into cones $\mathfrak{k}_{1}, \dots, \mathfrak{k}_{M}$,
such that each cone $\mathfrak{k}_{m} := \cf(\v_1, \dots, \v_{n-1})$ from $\{\mathfrak{k}_{1}, \dots, \mathfrak{k}_{M}\}$ is either \eqref{item-I} a cone of affine dimension less than $n-1$, or \eqref{item-II} a cone with positive associated matrix satisfying the properties~\eqref{prop:a}--\eqref{prop:e}.
Let 
\[C_m = \mathfrak{k}_m + \cf(\w_n) = \cf(\v_1, \dots, \v_{n-1}, \w_n)\] 
be the $n$-dimensional cone obtained by appending $\w_n$ to the generators of $\mathfrak{k}_m$. 
Then, the same arguments as in the proof of \autoref{thm:decomp-1} show that $C_m$ has the desired properties, and that the cones $C_1, \dots, C_M$ give a (signed) decomposition of the cone $\cf_i$ in the relation~\eqref{eq:BV-in-decomp-2}. This concludes the inductive step, and therefore, the theorem holds.
\end{proof}

\begin{corollary} \label{cor:decomp-2}
Let $C$ be a simplicial cone in $\R^n$. \autoref{thm:decomp-2} gives explicitly the cones $C_i$ whose normalized solid angle measures $\tilde{\Omega}_n(C_i)$ can be computed via the power series formula~\eqref{eq:t-alpha-series} and the signs $s_i \in \{\pm 1\}$, such that the normalized solid angle measure of $C$ satisfies
\begin{equation}\label{eq:solid-angle-decomp-sum}
  \tilde{\Omega}_n(C) = \sum_{i=1}^N s_i \tilde{\Omega}_n(C_i).  
\end{equation}
Furthermore, the number of cones in the decomposition is $N\leq (n-1)!$.
\end{corollary}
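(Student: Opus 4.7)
The plan is to derive the identity on solid angle measures from the indicator-function identity supplied by \autoref{thm:decomp-2}, and then to bound the number of pieces by a routine inductive count. For the first part, I would begin with
\[
[C] \equiv \sum_{i=1}^N s_i [C_i] \pmod{\text{indicator functions of lower-dimensional cones}}
\]
as produced by \autoref{thm:decomp-2}, multiply through by $e^{-\|\x\|^2}$, and integrate over $\R^n$. Any lower-dimensional cone is a Lebesgue-null set in $\R^n$ and therefore contributes $0$ to the integral, so dividing by $\pi^{n/2}$ and invoking equation~\eqref{eq:solid-angle-exp} gives exactly~\eqref{eq:solid-angle-decomp-sum}.

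Next I would justify that each $\tilde{\Omega}_n(C_i)$ in the sum can be computed via the power series~\eqref{eq:t-alpha-series}. For each $C_i$ that is lower-dimensional, $\tilde{\Omega}_n(C_i)=0$ by \autoref{rmk:simplicial_and_fulldimensonal}, so such terms drop out (or are simply assigned the value $0$). For each full-dimensional $C_i$ of form~\eqref{Form 2}, property~\eqref{property:D1-assoc-matrix-pos-def} of \autoref{thm:decomp-1} guarantees that its associated matrix is positive definite, so \autoref{thm:ribando2006} ensures absolute convergence of~\eqref{eq:t-alpha-series} to $\tilde{\Omega}_n(C_i)$.

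For the bound $N\leq (n-1)!$, I would induct on $n$. The base case $n=2$ is immediate: the cone is already of form~\eqref{Form 2}, and $N=1=1!$. For the inductive step I would trace the two cases in the proof of \autoref{thm:decomp-2}. In Case 1 (where $\w_n$ is orthogonal to all other $\w_i$), the decomposition is obtained by appending $\v_n=\w_n$ to the inductive decomposition of $\cf(\w_1,\dots,\w_{n-1})$, producing at most $(n-2)!\leq (n-1)!$ cones. In Case 2, \autoref{thm:BV-mod-lower-dim-cone} produces at most $n-1$ intermediate cones $\cf_i$ (indexed by those $1\leq i\leq n-1$ with $\delta_i\neq 0$, since $\delta_n=0$), and to each one we apply the inductive hypothesis in dimension $n-1$ to obtain at most $(n-2)!$ further pieces. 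Multiplying yields $N\leq (n-1)\cdot(n-2)!=(n-1)!$.

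The main obstacle is essentially bookkeeping rather than a conceptual difficulty. I need to make sure the $(n-2)!$ bound is applied uniformly at each recursive step, and that appending $\w_n$ (in Case 1) or $\w_n$ after the BV step (in Case 2) does not increase the count beyond what the inductive hypothesis permits. A mild technical point in the integration step is that \autoref{thm:decomp-2} produces generators that may need to be rescaled to unit vectors before applying~\eqref{eq:t-alpha-series}; since rescaling generators by positive scalars does not change the cone $C_i$ and hence does not change $\tilde{\Omega}_n(C_i)$, this rescaling is harmless. With these observations in place, the corollary follows directly from \autoref{thm:decomp-2}, \autoref{thm:ribando2006}, and the inductive count.
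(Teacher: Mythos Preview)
Your proposal is correct and follows essentially the same approach as the paper. The paper's proof derives~\eqref{eq:solid-angle-decomp-sum} by noting that lower-dimensional cones have normalized solid angle zero, applies \autoref{thm:ribando2006} to the full-dimensional pieces, and bounds $N$ by the same induction: $n=2$ is already of form~\eqref{item-II}, and in dimension $n$ the BV step produces at most $n-1$ cones $\cf_i$ (since $\delta_n=0$), each of which, after dropping $\u_{i,n}=\w_n$, is handled by the inductive $(n-2)!$ bound. Your treatment is in fact slightly more careful than the paper's, since you explicitly account for Case~1 (the orthogonal case) in the count, which the paper's proof leaves implicit.
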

\begin{proof}
 Suppose that \autoref{thm:decomp-2} yields a signed decomposition of the cone $C$: %into $N \leq (n-1)!$ cones $C_1, \dots, C_N$:
\[
\left[C\right] \equiv \sum_{i=1}^N s_i \left[C_i\right] \text{ modulo indicator functions of lower dimensional cones.}
\]

Since the normalized solid angle measure of a lower dimensional cone with respect to $\R^n$ is zero, equation~\eqref{eq:solid-angle-decomp-sum} holds.

Since each full-dimensional cone $C_i$ resulting from the decomposition has a positive-definite matrix, \autoref{thm:ribando2006} applies, and so $\tilde{\Omega}_n(C_i)$ can be computed via the power series~\eqref{eq:t-alpha-series}.

Finally, we show that $N\leq (n-1)!$ by induction on $n$. If $n=2$, then the base case in the proof of \autoref{thm:decomp-1} shows that the cone $C$ is already of the form~\eqref{item-II}, so no further decomposition is needed. Suppose that an $(n-1)$-dimensional
cone can be decomposed into at most $(n-2)!$ cones. For an $n$-dimensional cone $C$ whose associated matrix is not yet positive-definite, \autoref{thm:decomp-2} first decomposes it into the cones $\cf_i$ for $i$ such that $\delta_i\neq 0$, according to \eqref{eq:BV-in-decomp-2}. Since $\delta_n=0$, there are at most $n-1$ such cones $\cf_i =\cf(\u_{i,1}, \dots, \u_{i,n})$. Subsequently, for  each $\cf_i$ whose associated matrix is not yet positive-definite, we omit its generator $\u_{i,n}=\w_n$ in order to obtain the $(n-1)$-dimensional cone $\cf(\u_{i,1}, \dots, \u_{i,n-1})$, and we further decompose it into at most $(n-2)!$ cones that satisfy the desired properties by the inductive hypothesis. Therefore, the total number of cones resulting from \autoref{thm:decomp-2} is at most $(n-1)(n-2)!=(n-1)!$.
\end{proof}

\section{Asymptotic bound on the truncation error}
\subsection{Tridiagonal associated matrices}
As aforementioned, the large number of coordinates needed creates issues with computational feasibility of the power series formula~\eqref{eq:t-alpha-series} in \autoref{thm:ribando2006}. Ribando \cite[p.~487]{ribando2006measuring} states that ``\textit{\elide  accurate series approximations will require theorems allowing us to reduce the number of terms that need computing.}'' 
Note that when $\alpha_{ij}=0$, the only terms of $\ve{\alpha}^{\ve{a}}$ in the series~\eqref{eq:t-alpha-series} that are non-zero must have multiexponent $\ve{a}$ whose $a_{ij}=0$. 
Thus, one possibility for reducing the number of terms needed for computing is by decomposing into cones with as many pairwise orthogonal generators as possible. 
One particular interesting case is when 
$\left(\alpha_{ij}\right)_{1\leq i,j\leq n}$ is a tridiagonal matrix (i.e., $a_{ij}=\v_i\cdot\v_j=0$ whenever $i\neq j$ and $i+1 \neq j$), as we will discuss in this section.

Given linearly independent unit vectors $\v_1, \v_2,\dots, \v_n \in \Rr^n$, let $V\in \R^{n\times n}$ denote the matrix whose $i$-th column vector is $\v_i$.
Suppose that $V^T V$ is the following symmetric and  tridiagonal matrix, where $\beta_i = \v_i \cdot \v_{i+1}$ for $1\leq i\leq n-1$.
\begin{equation}\label{eq:VTV}
V^TV = \begin{bmatrix}
1 & \beta_1 & 0 & \dots & 0\\
\beta_1 & 1 & \beta_2 & \ddots & \vdots\\
0 &  \ddots & \ddots & \ddots & 0\\
\vdots  &   & \beta_{n-2} & 1 & \beta_{n-1}\\
0 &  \dots & 0 & \beta_{n-1} & 1
\end{bmatrix}.
\end{equation}
Then, the formula~\eqref{eq:t-alpha-series} simplifies to the following multivariate power series $T_{\ve \beta}$ in $(n-1)$-variables.
\begin{equation}\label{eq:t-beta-series}
\begin{split}
T_{\ve \beta} = \frac{\mathopen| \det V|}{(4\pi)^{\frac{n}{2}}} \sum_{{\ve b}\in \N^{n-1}} \left[ \frac{(-2)^{\sum b_{i}}}{\prod_{i=1}^{n-1} b_{i}!}\,
\Gamma\left(\frac{1+ b_1}{2}\right) 
 \Gamma\left(\frac{1+ b_1+b_2}{2}\right) 
 \cdots \right.\\
 \left. \Gamma\left(\frac{1+ b_{n-2}+b_{n-1}}{2}\right) \Gamma\left(\frac{1+b_{n-1}}{2}\right) \right] \ve\beta^{\ve{b}} .
 %\beta_1^{b_1}\beta_2^{b_2}\cdots\beta_{n-1}^{b_{n-1}}.
 \end{split}
\end{equation}

\begin{theorem}\label{thm:tridiag-matrix}
Let $C = \cf(\v_1, \v_2, \dots, \v_n)$ be a cone in $\R^n$ generated by the linearly independent unit vectors $\v_1, \v_2, \dots, \v_n$. Let $V$ be the matrix whose column vectors are $\v_1, \v_2, \dots, \v_n$. Suppose that $V^T V$ is a tridiagonal matrix as in~\eqref{eq:VTV}. Then, the associated matrix $M_n(C)$ has the same eigenvalues as $V^T V$. 

In particular, if $V^T V$ is tridiagonal, then $M_n(C)$ is positive definite, and the power series~\eqref{eq:t-beta-series} converges absolutely to the normalized solid angle measure $\tilde\Omega_n(C)$ of the cone $C$.
\end{theorem}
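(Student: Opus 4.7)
My plan is to exhibit an explicit diagonal similarity transformation carrying $V^T V$ to $M_n(C)$, from which the equality of spectra, positive definiteness, and convergence claims all follow.

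First, I would observe that whenever $V^T V$ is tridiagonal as in~\eqref{eq:VTV}, the associated matrix $M_n(C)$ is automatically tridiagonal as well: for $|i-j| \geq 2$ the hypothesis $\v_i \cdot \v_j = 0$ forces $-|\v_i\cdot\v_j| = 0$, so the only nonzero off-diagonal entries of $M_n(C)$ are $(M_n(C))_{i,i+1} = -|\beta_i|$ on the sub- and super-diagonals. Thus $M_n(C)$ is obtained from $V^T V$ by replacing each $\beta_i$ with $-|\beta_i|$, with all diagonal entries remaining $1$.

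Next, I would construct a sign pattern $\epsilon_1, \dots, \epsilon_n \in \{\pm 1\}$ by setting $\epsilon_1 = 1$ and, for $1 \leq i \leq n-1$, choosing $\epsilon_{i+1}$ so that $\epsilon_i \epsilon_{i+1} \beta_i = -|\beta_i|$ (when $\beta_i = 0$ either choice works). Letting $D = \text{diag}(\epsilon_1, \dots, \epsilon_n)$, so that $D^{-1} = D$, a direct entry-wise computation yields $D(V^T V) D = M_n(C)$: the diagonal entries are unchanged, and the $(i, i+1)$ entry transforms to $\epsilon_i \epsilon_{i+1} \beta_i = -|\beta_i|$ by construction. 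Hence $M_n(C)$ is similar to $V^T V$ and shares its eigenvalues. Because $\v_1, \dots, \v_n$ are linearly independent, $V$ is invertible and $V^T V$ is positive definite; the spectral equality then gives positive definiteness of $M_n(C)$.

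Finally, positive definiteness of $M_n(C)$ is exactly the hypothesis of \autoref{thm:ribando2006}, which yields absolute convergence of~\eqref{eq:t-alpha-series} to $\tilde\Omega_n(C)$. The collapse to the $(n-1)$-variable formula~\eqref{eq:t-beta-series} is then pure bookkeeping: since $\alpha_{ij} = 0$ whenever $|i - j| \geq 2$, any multiexponent $\ve a$ contributing a nonzero term must have $a_{ij} = 0$ for $|i - j| \geq 2$, and reindexing by $\ve b = (a_{12}, a_{23}, \dots, a_{n-1, n})$ reduces $\sum_{m\neq i} a_{im}$ to $b_{i-1} + b_i$ (with the convention $b_0 = b_n = 0$), which matches the Gamma factors in~\eqref{eq:t-beta-series}. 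I do not anticipate any substantial obstacle; the only mildly delicate step is choosing the signs, and this is immediate from the tridiagonal structure.
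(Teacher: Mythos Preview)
Your proof is correct but takes a different route from the paper. The paper argues via the classical three-term recurrence for the characteristic polynomials of the leading principal submatrices of a symmetric tridiagonal matrix,
\[
P_j(\lambda) = (1-\lambda)P_{j-1}(\lambda) - \beta_{j-1}^2 P_{j-2}(\lambda),
\]
and observes that this depends only on $\beta_{j-1}^2 = (-|\beta_{j-1}|)^2$, so $V^T V$ and $M_n(C)$ share the same characteristic polynomial. Your approach instead produces an explicit diagonal similarity $D(V^T V)D = M_n(C)$ with $D$ a signature matrix. Both are standard and elementary; yours is arguably more direct and has the pleasant feature that, since $D^T = D$, the relation is simultaneously a congruence, so positive definiteness transfers immediately without passing through eigenvalues. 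Your construction of the $\epsilon_i$'s also echoes property~\eqref{property:D1-assoc-matrix-pos-def} in the decomposition theorems, which is a nice structural observation. The paper's recurrence argument, on the other hand, generalizes slightly more readily to statements about individual principal minors if those were ever needed.
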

\begin{proof} When $V^T V$ is the symmetric and tridiagonal matrix given by equation~\eqref{eq:VTV}, the associated matrix of $C$ is
\begin{equation} \label{eq:mnc-tridiag}
M_n(C) = \begin{bmatrix}
1 & -|\beta_1| & 0 & \dots & 0\\
-|\beta_1| & 1 & -|\beta_2| & \ddots & \vdots\\
0 &  \ddots & \ddots & \ddots & 0\\
\vdots  &   & -|\beta_{n-2}| & 1 & -|\beta_{n-1}|\\
0 &  \dots & 0 & -|\beta_{n-1}| & 1
\end{bmatrix},
\end{equation}
which is also symmetric and tridiagonal.

Let $P_j(\lambda)$ be the characteristic polynomial of the $j$-th leading principal minor of $V^T V$ for $1\leq j\leq n$. Set $\beta_0=0$. We have $P_0(\lambda)=1,\; P_1(\lambda)=1-\lambda$ and
\[P_j(\lambda) = (1 - \lambda)P_{j-1}(\lambda) - \beta_{j-1}^2 P_{j-2}(\lambda) \;\text{ for } 2\leq j \leq n,\] 
where the last recurrence relation is a well-known result for symmetric tridiagonal matrix (e.g., see~\cite{parlett1998symmetric}). 
 Since relation depends only on $\beta_{j-1}^2 = \left(- \left|\beta_{j-1}\right|\right)^2$, it is clear that the characteristic polynomial of the associated matrix is the same as that of $V^TV$. Therefore, $V^T V$ and $M_n(C)$ have the same eigenvalues.
 
 Since the columns of $V$ are linearly independent, for any $\ve{x}\neq\ve{0}$, we have $\x^T(V^T V)\x=\|V\x\|^2> 0$, showing that $V^T V$ is positive definite. Thus, $M_n(C)$ is also positive definite. It follows from \autoref{thm:ribando2006} that the power series~\eqref{eq:t-beta-series} converges absolutely to the normalized solid angle measure $\tilde\Omega_n(C)$ of the cone $C$.
\end{proof}

\begin{remark}
We can strengthen property~\eqref{property:D1-orthogonal-to-vn} from ``$\v_i \cdot \v_n =0$ for $i = 1,\dots, n-2$''
    in \autoref{thm:decomp-1} and \autoref{thm:decomp-2} to
\begin{equation}\label{prop:IId-new}\tag{IId'}
    \v_i \cdot \v_j =0 \text{ for all }1 \leq i, j \leq n \text{ such that } j\neq i \text{ or } i\pm 1.
\end{equation}
The proofs of the two theorems hold verbatim. The new property~\eqref{prop:IId-new} ensures that $V^T V$ is a tridiagonal matrix for any cone from the decomposition, so that \autoref{thm:tridiag-matrix} applies. Therefore, we have the following variant of \autoref{cor:decomp-2}, which takes advantage of the tridiagonal structure. We note that the number of cones resulting from the new decomposition can be larger than that of \autoref{cor:decomp-2}, but it is still upper bounded by $(n-1)!$.
\end{remark}

\begin{corollary}\label{cor:tridiag-decomp}
Let $C$ be a simplicial cone in $\R^n$. \autoref{thm:decomp-2} (with property~\eqref{property:D1-orthogonal-to-vn} replaced by \eqref{prop:IId-new}) gives explicitly the cones $C_i$ and the signs $s_i \in \{\pm 1\}$, such that the normalized solid angle measure of $C$ satisfies
\begin{equation*}
  \tilde{\Omega}_n(C) = \sum_{i=1}^N s_i \tilde{\Omega}_n(C_i).  
\end{equation*}
The cones $C_i$ have positive-definite and tridiagonal associated matrices, so their normalized solid angle measures $\tilde{\Omega}_n(C_i)$ can be computed via the simplified power series formula~\eqref{eq:t-beta-series}.
Furthermore, the number of cones in the decomposition is $N\leq (n-1)!$.
\end{corollary}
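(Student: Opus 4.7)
The plan is to reduce this corollary to the combination of three ingredients already in place: the strengthened decomposition (Theorem \ref{thm:decomp-2} with \eqref{property:D1-orthogonal-to-vn} replaced by \eqref{prop:IId-new}), the tridiagonal convergence criterion of Theorem \ref{thm:tridiag-matrix}, and the combinatorial counting argument that was used for Corollary \ref{cor:decomp-2}. None of these pieces needs to be reproved; the work is just to check that they fit together with the strengthened property.

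First I would apply the strengthened version of Theorem \ref{thm:decomp-2} to $C$, obtaining a signed decomposition
\[
[C] \equiv \sum_{i=1}^N s_i [C_i] \quad\text{modulo indicator functions of lower-dimensional cones.}
\]
Since a lower-dimensional cone in $\R^n$ has $\tilde{\Omega}_n = 0$ by \autoref{rmk:simplicial_and_fulldimensonal}, integrating both sides against the rotation-invariant Gaussian weight as in \eqref{eq:solid-angle-exp} yields $\tilde{\Omega}_n(C) = \sum_{i=1}^N s_i \tilde{\Omega}_n(C_i)$, which is the desired equation.

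Next I would verify that each full-dimensional $C_i = \cf(\v_1, \dots, \v_n)$ produced by the strengthened decomposition has a tridiagonal Gram matrix. This is immediate from the replacement property \eqref{prop:IId-new}: it says precisely that $\v_i \cdot \v_j = 0$ whenever $|i-j| \geq 2$, which together with property~\eqref{property:D1-norm-is-1} ($\|\v_i\| = 1$) means $V^T V$ has exactly the form~\eqref{eq:VTV}. By Theorem \ref{thm:tridiag-matrix}, the associated matrix $M_n(C_i)$ is then positive definite, and the simplified power series~\eqref{eq:t-beta-series} in only $n-1$ variables converges absolutely to $\tilde{\Omega}_n(C_i)$.

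Finally, for the bound $N \leq (n-1)!$, I would repeat verbatim the inductive counting argument from the proof of Corollary \ref{cor:decomp-2}. The base case $n=2$ requires no decomposition. For the inductive step, \autoref{thm:BV-mod-lower-dim-cone} produces at most $n-1$ cones $\cf_i$ (one for each index $i$ with $\delta_i \neq 0$; note $\delta_n = 0$). Each such $\cf_i$ shares $\w_n$ as a generator, and the recursive call is then applied to the $(n-1)$-dimensional subcone spanned by the remaining generators; by the inductive hypothesis, each of these contributes at most $(n-2)!$ pieces, so the total is at most $(n-1)(n-2)! = (n-1)!$. The only thing to double-check is that strengthening~\eqref{property:D1-orthogonal-to-vn} to \eqref{prop:IId-new} does not spawn extra branches in the recursion, which is clear because the Brion–Vergne step and the Case~1/Case~2 dichotomy in the proof of \autoref{thm:decomp-1} are unchanged; the strengthened property only constrains how the $\v_i$ are chosen within each leaf of the recursion tree, not the shape of the tree itself. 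No step here is a genuine obstacle; the only subtlety worth flagging is confirming that the recursion tree's branching factor is unaffected by the stronger property, and this follows because property~\eqref{prop:IId-new} is automatically inherited from the inductive hypothesis applied in dimension $n-1$ together with the orthogonality $\v_i \cdot \w_n = 0$ already enforced in Case 2.
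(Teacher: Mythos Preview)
Your proposal is correct and matches the paper's approach: the corollary is obtained by combining the strengthened Theorem~\ref{thm:decomp-2}, Theorem~\ref{thm:tridiag-matrix}, and the counting argument of Corollary~\ref{cor:decomp-2}, exactly as the preceding Remark indicates.

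One small caveat: your assertion that the strengthened property ``does not spawn extra branches'' and leaves ``the shape of the tree'' unchanged is not quite right. The paper's Remark explicitly notes that the number of resulting cones ``can be larger'' under the strengthened decomposition, because the stricter requirement~\eqref{prop:IId-new} means fewer intermediate cones qualify as ``already of form~\eqref{Form 2}'' and so more of them are sent on to further recursion. This does not, however, affect the bound $N\le (n-1)!$: the counting argument from Corollary~\ref{cor:decomp-2} already assumes the worst case (recurse all the way to dimension~$2$ along every branch), so the same product $(n-1)(n-2)\cdots 1$ upper-bounds the leaf count regardless of the stopping criterion.
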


\subsection{Eigenvalues and series truncation errors}

In their study \cite{gourion2010deterministic}, Gourion and Seeger observed that a particular power series exhibited slow convergence when the associated matrix was close to being singular. Considering this observation alongside the previous \autoref{thm:tridiag-matrix}, it is reasonable to investigate the impact of eigenvalues on the convergence of the power series. 

We notice that if the linearly independent unit vectors $\ve{v}_i$'s are all pairwise orthogonal, then the normalized solid angle measure of the cone $C=\cf(\v_1, \hdots, \v_n)$ is  trivial:
\[\tilde{\Omega}_n(C)= \frac{1}{2^{n}}.\] 
Therefore, in the following, we further assume that $n\geq 2$ and that $\v_i \cdot \v_j\neq 0$ for some $1\leq i < j \leq n$. Let $V$ denote the matrix whose $i^{th}$ column is $\v_i$.  Let $\lambda_{\min}$ be the smallest eigenvalue of $V^T V$.

\begin{lemma}\label{lemma:lambdamin}
    The smallest eigenvalue $\lambda_{\min}$ of $V^T V$ satisfies $0< \lambda_{\min} <1$.
\end{lemma}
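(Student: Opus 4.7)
The plan is to prove both inequalities separately, using only very standard linear algebra facts about Gram matrices together with the hypothesis that some off-diagonal inner product is nonzero.

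For the lower bound $\lambda_{\min} > 0$, I would argue exactly as in the proof of \autoref{thm:tridiag-matrix}: since $\ve{v}_1,\dots,\ve{v}_n$ are linearly independent, for every nonzero $\x \in \R^n$ we have $\x^T (V^T V)\, \x = \|V\x\|^2 > 0$, so $V^T V$ is positive definite and in particular its smallest eigenvalue is strictly positive.

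For the upper bound $\lambda_{\min} < 1$, I would use a trace/Rayleigh-quotient argument. Since each $\v_i$ is a unit vector, the diagonal entries of $V^T V$ are all equal to $1$, so $\mathrm{tr}(V^T V) = n$, which equals the sum of the eigenvalues of $V^T V$. Applying the Rayleigh quotient to the standard basis vector $\ve{e}_i$ gives $\lambda_{\min} \leq \ve{e}_i^T (V^T V) \ve{e}_i = 1$, so at worst $\lambda_{\min} \leq 1$. Now suppose for contradiction that $\lambda_{\min} = 1$. Then every eigenvalue of $V^T V$ is at least $1$, while their sum is exactly $n$, forcing every eigenvalue to equal $1$. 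Because $V^T V$ is real symmetric and hence orthogonally diagonalizable, having all eigenvalues equal to $1$ would mean $V^T V = I$, i.e.\ $\v_i \cdot \v_j = 0$ for all $i \neq j$. This contradicts the standing assumption that $\v_i \cdot \v_j \neq 0$ for some $1 \leq i < j \leq n$, so in fact $\lambda_{\min} < 1$.

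Neither step presents a real obstacle; the only subtlety is making sure to invoke the hypothesis that some off-diagonal inner product is nonzero, which is precisely what rules out the boundary case $\lambda_{\min}=1$ corresponding to the pairwise orthogonal configuration already excluded in the preceding paragraph of the paper.
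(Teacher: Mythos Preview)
Your proof is correct. The lower bound argument is identical to the paper's. For the upper bound, the paper takes a different route: it invokes the Cauchy interlacing theorem to bound $\lambda_{\min}$ by the minimum eigenvalue of the $2\times 2$ principal submatrix $\begin{pmatrix} 1 & \v_i\cdot\v_j \\ \v_i\cdot\v_j & 1\end{pmatrix}$ with $\v_i\cdot\v_j\neq 0$, obtaining directly $\lambda_{\min}\leq 1-|\v_i\cdot\v_j|<1$. Your trace/Rayleigh-quotient argument is more elementary, avoiding any named theorem beyond diagonalizability of symmetric matrices; the paper's approach, on the other hand, yields the slightly sharper quantitative estimate $\lambda_{\min}\leq 1-|\v_i\cdot\v_j|$, though this refinement is not used later. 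Either argument is perfectly adequate for the lemma as stated.
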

\begin{proof}
The matrix $V^T V$ is positive definite, since for any $\ve{x}\neq\ve{0}$, we have that $\x^T(V^T V)\x=\|V\x\|^2> 0$, where the last strict inequality follows from the linear independence of the columns of $V$. Thus, $\lambda_{\min}>0$. 

By Cauchy interlacing theorem, $\lambda_{\min}$ is less than or equal to the minimum eigenvalue of a principal submatrix of $V^T V$. In particular, by taking the principal submatrix 
$\begin{bmatrix} 1 &\v_i \cdot \v_j\\ \v_i \cdot \v_j  & 1\end{bmatrix}$ where $\v_i \cdot \v_j\neq 0$, we have $\lambda_{\min}\leq 1 - \v_i \cdot \v_j < 1$.
\end{proof}

For the rest of this section, we focus on the case where $V^T V$ is the tridiagonal matrix as defined in equation~\eqref{eq:VTV}.
%\begin{assumption}
%Assume that $V^T V$ is the tridiagonal matrix as defined in equation~\eqref{eq:VTV}. $n\geq 2$. %$\ve\beta\neq \ve{0}$.
%\end{assumption}
%When $V^T V$ is a tridiagonal matrix, 
\autoref{thm:tridiag-matrix} implies that the power series $T_{\ve \beta}$ in $n-1$ variables as defined in equation~\eqref{eq:t-beta-series} converges absolutely to the solid angle measure. For simplicity, we consider only the series part 
\begin{equation*}
\frac{(4\pi)^{n/2}}{|\det V|} T_{\ve\beta} =  \sum_{{\ve b}\in \N^{n-1}} A_{\ve b}\, \ve\beta^{\ve{b}},
\end{equation*}  
where, for any multiexponent $\ve{b} = (b_1, \hdots, b_{n-1})$ in $\mathbb{N}^{n-1}$,
\begin{equation}\label{eq:Ab}
\begin{split}
 A_{\ve b} := \frac{(-2)^{\sum b_{i}}}{\prod_{i=1}^{n-1} b_{i}!}\,
\Gamma\left(\frac{1+ b_1}{2}\right) 
 &\Gamma\left(\frac{1+ b_1+b_2}{2}\right) 
 \cdots\\
 &\Gamma\left(\frac{1+ b_{n-2}+b_{n-1}}{2}\right) \Gamma\left(\frac{1+b_{n-1}}{2}\right).
\end{split}
\end{equation} 
We regard 
\begin{equation*}
    T(\x) = \sum_{{\ve b}\in \N^{n-1}} A_{\ve b}\, x_1^{b_1} x_2^{b_2}\cdots x_{n-1}^{b_{n-1}}
\end{equation*}
as a hypergeometric series of $(n-1)$ variables $\x=(x_1, \dots, x_{n-1})$ in Horn's sense \cite{horn1889convergenz}. 

\begin{remark}\label{rk:boundary-cvg}
We recall some standard notations and results from \cite{horn1889convergenz, ribando2006measuring} regarding the convergence of a hypergeometric series. We rephrase them below for the triadiagonal case  to describe the domain of convergence of $T(\x)$. 

Let $\ve{e}_i$ denote the $i$-th standard basis vector. 
For $1\leq i \leq n-1$, we define the ratio of the neighboring coefficients
 \begin{equation}\label{eq:fi}
     f_i(\ve{b}) :=\frac{A_{\ve{b}+\ve{e}_i}}{A_{\ve{b}}},
 \end{equation} 
and introduce the limit
\begin{equation*}
   \Psi_i(\ve{b}) := \lim_{t \rightarrow \infty} f_i(t\ve{b}).
\end{equation*}
We can view $A_{\ve{b}}, f_i(\ve{b})$ and $\Psi_i(\ve{b})$ as functions of $\ve{b}\in \R_{+}^{n-1}$ instead of $\ve{b}\in \N^{n-1}$. Then, for $1\leq i \leq n-1$, $f_i$ is a rational function of the variables $\ve{b}\in \R_{+}^{n-1}$ . Set $b_0=b_n =0$. The function $\Psi_i$ satisfies
\begin{equation}\label{eq:psi}
   \Psi_i(\ve{b}) = \lim_{t \rightarrow \infty} \frac{A_{t\ve{b}+\ve{e}_i}}{A_{t\ve{b}}}=  - \frac{\sqrt{(b_{i-1}+b_i)(b_i+b_{i+1})}}{b_i},
\end{equation}
and it is a rational and homogeneous function of degree zero.

The parameterized curve $\left(\frac{1}{|\Psi_1(\ve{b})|},\dots, \frac{1}{|\Psi_{n-1}(\ve{b})|}\right)$ %for $\ve{b}\in \R_{+}^{n-1}$ 
describes an hypersurface that bounds the convergence domain of the hypergeometric series $T(\x)$. That is, if a point $\ve{x}$ lies on the boundary of the convergence domain, then for some $\ve{b}\in \R_{+}^{n-1}$, 
\begin{equation}\label{eq:x-on-boundary}
    |x_i| = \frac{1}{|\Psi_i(\ve{b})|} \quad\quad \forall\; 1\leq i\leq n-1.
\end{equation}
In addition, 
\cite[Theorem 3.2]{ribando2006measuring} states that $\ve{x}$ lies on the boundary of the convergence domain, if
\begin{equation}\label{eq:boundary-det-0}
  \det \begin{bmatrix}
1 & -|x_1| & 0 & \dots & 0\\
-|x_1| & 1 & -|x_2| & \ddots & \vdots\\
0 &  \ddots & \ddots & \ddots & 0\\
\vdots  &   & -|x_{n-2}| & 1 & -|x_{n-1}|\\
0 &  \dots & 0 & -|x_{n-1}| & 1
\end{bmatrix} = 0.
\end{equation}
\end{remark}

It follows from \autoref{thm:tridiag-matrix} that in the tridiagonal case, the minimum eigenvalue of  the associated matrix $M(C)$ given in~\eqref{eq:mnc-tridiag} is also $\lambda_{\min}$, where $0<\lambda_{\min}<1$ by \autoref{lemma:lambdamin}, and that $\x=\ve{\beta}$ lies in the convergence domain of $T(\x)$.
The following lemma suggests that $1-\lambda_{\min}$ plays a role in the convergence of $T(\x)$.
\begin{lemma}\label{lemma:beta-lambda-on-boundary}
    The point $\left(\frac{\beta_1}{1-\lambda_{\min}}, \dots,\frac{\beta_{n-1}}{1-\lambda_{\min}}\right)$ lies on the boundary of the domain of convergence of the series $T(\ve{x})$.
\end{lemma}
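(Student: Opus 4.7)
The plan is to invoke the sufficient condition \eqref{eq:boundary-det-0} from \cite[Theorem 3.2]{ribando2006measuring} and verify that the determinant vanishes at the given point $\x = \left(\tfrac{\beta_1}{1-\lambda_{\min}}, \dots, \tfrac{\beta_{n-1}}{1-\lambda_{\min}}\right)$.

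First I would observe a sign-insensitivity of the relevant determinant. A symmetric tridiagonal matrix with diagonal entries equal to $1$ and off-diagonal entries $c_1, \dots, c_{n-1}$ has determinant satisfying the three-term recurrence $D_j = D_{j-1} - c_{j-1}^2 D_{j-2}$ (with $D_0 = 1$, $D_1 = 1$), so the determinant depends only on the squares $c_i^2$. Consequently, replacing the entries $-|x_i|$ in the matrix of \eqref{eq:boundary-det-0} by $x_i$ leaves the determinant unchanged. Applying this with $x_i = \beta_i/(1-\lambda_{\min})$ reduces the problem to showing
\[
\det\begin{bmatrix}
1 & \beta_1/(1-\lambda_{\min}) & & \\
\beta_1/(1-\lambda_{\min}) & 1 & \ddots & \\
& \ddots & \ddots & \beta_{n-1}/(1-\lambda_{\min})\\
& & \beta_{n-1}/(1-\lambda_{\min}) & 1
\end{bmatrix} = 0.
\]

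Next I would factor out $1/(1-\lambda_{\min})$ from each row. Writing $\mu = 1 - \lambda_{\min}$, this yields
\[
\det\bigl(\tfrac{1}{\mu}(V^TV - \lambda_{\min} I)\bigr) = \tfrac{1}{\mu^n}\det(V^TV - \lambda_{\min} I),
\]
because $V^TV - \lambda_{\min}I$ is exactly the tridiagonal matrix with diagonal entries $\mu$ and off-diagonal entries $\beta_i$ (by \eqref{eq:VTV}). Since $\lambda_{\min}$ is by definition an eigenvalue of $V^TV$, we have $\det(V^TV - \lambda_{\min}I) = 0$, and by Lemma~\ref{lemma:lambdamin} the factor $\mu = 1 - \lambda_{\min} \neq 0$, so the rescaled determinant is likewise zero.

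Combining these two steps gives \eqref{eq:boundary-det-0} at the prescribed point, so the criterion from \cite[Theorem 3.2]{ribando2006measuring} (as recalled in Remark~\ref{rk:boundary-cvg}) places $\left(\tfrac{\beta_1}{1-\lambda_{\min}}, \dots, \tfrac{\beta_{n-1}}{1-\lambda_{\min}}\right)$ on the boundary of the convergence domain of $T(\x)$. The argument is short; the only place to be careful is the sign-insensitivity of the tridiagonal determinant, which is what lets us pass from Ribando's matrix (with entries $-|x_i|$) to the eigenvalue matrix $V^TV - \lambda_{\min}I$ (with signed entries $\beta_i$). No serious obstacle is anticipated.
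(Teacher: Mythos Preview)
Your proposal is correct and follows essentially the same approach as the paper: verify the determinant criterion \eqref{eq:boundary-det-0} by factoring $1/(1-\lambda_{\min})$ out of each row to reduce to an eigenvalue equation. The only cosmetic difference is that the paper works directly with the associated matrix $M_n(C)$ (whose off-diagonal entries are already $-|\beta_i|$, matching \eqref{eq:boundary-det-0} immediately since $1-\lambda_{\min}>0$) and invokes \autoref{thm:tridiag-matrix} to know $\lambda_{\min}$ is an eigenvalue of $M_n(C)$; you instead re-derive the sign-insensitivity of the tridiagonal determinant inline in order to pass to $V^TV-\lambda_{\min}I$.
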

\begin{proof}
    By \autoref{rk:boundary-cvg}, it suffices to show that equation~\eqref{eq:boundary-det-0} holds for $\ve{x}=\frac{\ve{\beta}}{1-\lambda_{\min}}$, which is clearly satisfied since $\lambda_{\min}$ is an eigenvalue of $M(C)$.
\end{proof}
We show a lemma regarding the monotonicity of the functions $f_i$ in \eqref{eq:fi}, which will become useful later in analysing the series truncation errors.
\begin{lemma}\label{lemma:fi-monotone}
    Let $\ve{b}\in \R_+^{n-1}$ and let $1\leq i, j\leq n-1$ such that $i\neq j$. Then, $|f_i(\ve{b}+\ve{e}_j)| \geq |f_i(\ve{b})|$.
\end{lemma}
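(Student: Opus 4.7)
The plan is to compute $f_i(\ve{b})$ explicitly from \eqref{eq:Ab} and track which Gamma factors actually depend on which components of $\ve{b}$. Adopting the convention $b_0 = b_n = 0$, equation~\eqref{eq:Ab} can be rewritten compactly as
\begin{equation*}
A_{\ve{b}} \;=\; \frac{(-2)^{\sum_k b_k}}{\prod_k b_k!}\, \prod_{k=0}^{n-1} \Gamma\!\left(\tfrac{1+b_k+b_{k+1}}{2}\right).
\end{equation*}
First I would form the ratio $f_i(\ve{b}) = A_{\ve{b}+\ve{e}_i}/A_{\ve{b}}$. The only Gamma factors affected by bumping $b_i$ by one are the two neighboring ones, while the combinatorial prefactor contributes $-2/(b_i+1)$. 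This yields
\begin{equation*}
|f_i(\ve{b})| \;=\; \frac{2}{b_i+1}\; g\!\left(\tfrac{1+b_{i-1}+b_i}{2}\right)\, g\!\left(\tfrac{1+b_i+b_{i+1}}{2}\right),
\end{equation*}
where $g(y) := \Gamma(y+\tfrac12)/\Gamma(y)$.

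Next I would establish the key analytic fact that $g$ is strictly increasing on $(0,\infty)$. This follows at once from the logarithmic derivative $g'(y)/g(y) = \psi(y+\tfrac12) - \psi(y)$, together with the classical monotonicity of the digamma function $\psi$ on the positive real axis.

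With the formula for $|f_i(\ve{b})|$ and the monotonicity of $g$ in hand, I would finish by casework on $j \neq i$. If $j \in \{i-1,i+1\}$, exactly one of the two arguments $(1+b_{i-1}+b_i)/2$ or $(1+b_i+b_{i+1})/2$ increases by $\tfrac12$ while the other is unchanged and the prefactor $2/(b_i+1)$ is unchanged; monotonicity of $g$ then gives $|f_i(\ve{b}+\ve{e}_j)| \geq |f_i(\ve{b})|$. If $j \notin \{i-1,i,i+1\}$, none of the three quantities $b_{i-1}, b_i, b_{i+1}$ is altered, so $|f_i|$ is unchanged, giving equality. In every case the inequality holds, completing the proof.

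The only potentially delicate step is the monotonicity of $g(y)=\Gamma(y+\tfrac12)/\Gamma(y)$, but this is standard. Everything else is a bookkeeping calculation on which Gamma factors depend on which $b_k$.
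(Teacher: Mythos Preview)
Your proof is correct and follows essentially the same approach as the paper. The paper also splits into the cases $j\neq i\pm 1$ (trivial equality) and $j=i\pm 1$, and in the latter reduces to the inequality $\Gamma(a+1)/\Gamma(a+\tfrac12)\geq \Gamma(a+\tfrac12)/\Gamma(a)$ with $a=(1+b_i+b_j)/2$, i.e., exactly your monotonicity of $g(y)=\Gamma(y+\tfrac12)/\Gamma(y)$; the paper justifies this by appealing directly to the log-convexity of $\Gamma$, which is the same underlying fact as your digamma-monotonicity argument.
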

\begin{proof}
    If $j\neq i\pm 1$, then it follows from equations~\eqref{eq:fi} and \eqref{eq:Ab} that $|f_i(\ve{b}+\ve{e}_j)| = |f_i(\ve{b})|$. If $j=i \pm 1$, then the desired inequality is equivalent to
    \begin{equation*}
        \frac{\Gamma\left(\frac{3+b_i+b_j}{2}\right)}{\Gamma\left(\frac{2+b_i+b_j}{2}\right)} \geq
        \frac{\Gamma\left(\frac{2+b_i+b_j}{2}\right)}{\Gamma\left(\frac{1+b_i+b_j}{2}\right)},
    \end{equation*}
    % \(\Gamma\left(\frac{3+b_i+b_j}{2}\right)\) \Big/ \(\Gamma\left(\frac{2+b_i+b_j}{2}\right)\)
    which holds because the $\Gamma$ function is log-convex on the positive real axis.
\end{proof}
We are interested in an asymptotic analysis on the truncation error of the series $T(\ve{\beta})$.  Truncating the series in partial degrees $\left(N_1,\dots, N_{n-1}\right)$, the error term is bounded by $E(N_1, \hdots, N_{n-1})$ defined below.
 
\begin{definition}\label{def:error-series}
For partial degrees $\left(N_1,\dots, N_{n-1}\right) \in \mathbb{N}^{n-1}$, define the series $T(\ve{\beta})$ truncation error as
 \[E(N_1, \dots, N_{n-1})  = \sum_{\ve{b}\in B} \left|A_{\ve b} \ve{\beta}^{\ve{b}}\right|,\]
 where $B = \{\ve{b} \in \mathbb{N}^{n-1} \st b_i \geq N_i \text{ for at least one } i\}$.
\end{definition}

We investigate the asymptotic decay of $E(N_1,\dots,N_{n-1})$, in relation to $1-\lambda_{\min}$.

\begin{theorem} \label{thm:error-any-dim}
For any $\rho$ such that $1-\lambda_{\min} < \rho < 1$, there exist partial degrees $N_1, \hdots, N_{n-1}$ such that for any integer $\ell \geq 1$, we have 
\begin{equation*}
    E(N_1+\ell, \dots, N_{n-1}+\ell) \leq \rho^\ell\,E(N_1, \dots, N_{n-1}).
\end{equation*}
\end{theorem}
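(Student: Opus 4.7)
The plan is to reduce the claim to a single-step bound
\[
E(N_1+\ell,\dots,N_{n-1}+\ell)\le \rho\cdot E(N_1+\ell-1,\dots,N_{n-1}+\ell-1)
\]
for every $\ell\ge 1$; iterating this $\ell$ times then yields $E(N_1+\ell,\dots,N_{n-1}+\ell)\le\rho^\ell E(N_1,\dots,N_{n-1})$. Each single-step bound is to be established by a term-wise ratio estimate combined with an injection between the truncation-tail sets.

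First, I would anchor the partial degrees on the critical direction identified by \autoref{lemma:beta-lambda-on-boundary}: there exists $\ve{b}^*\in\R_+^{n-1}$ such that $|\Psi_i(\ve{b}^*)\beta_i|=1-\lambda_{\min}$ for every $i$. Choose an auxiliary $\rho'\in(1-\lambda_{\min},\rho)$. Using the limit $\lim_{t\to\infty}f_i(t\ve{b}^*)=\Psi_i(\ve{b}^*)$ stated in \autoref{rk:boundary-cvg}, one can pick $K>0$ such that $|f_i(t\ve{b}^*)\beta_i|\le\rho'$ for every $t\ge K$ and every $i$. Setting $N_i:=\lceil K b_i^*\rceil$ anchors $\ve{N}$ along this ray; shifting to $\ve{N}+\ell\ve{1}$ only takes us further along the ray, so the ratio control persists and the same $\ve{N}$ serves every $\ell\ge 1$.

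For the term-wise estimate, given $\ve{b}\in B_\ell:=\{\ve{b}\in\N^{n-1}: b_i\ge N_i+\ell\text{ for some }i\}$, I would let $i(\ve{b})$ be (say) the smallest such index and use the recursion
\[
|A_{\ve{b}}\ve{\beta}^{\ve{b}}|=|A_{\ve{b}-\ve{e}_{i(\ve{b})}}\ve{\beta}^{\ve{b}-\ve{e}_{i(\ve{b})}}|\cdot|f_{i(\ve{b})}(\ve{b}-\ve{e}_{i(\ve{b})})\beta_{i(\ve{b})}|.
\]
Once each ratio factor is seen to be at most $\rho$ and the map $\phi:\ve{b}\mapsto\ve{b}-\ve{e}_{i(\ve{b})}$ is seen to inject $B_\ell$ into $B_{\ell-1}$, summing over $\ve{b}\in B_\ell$ delivers the desired single-step bound, and induction on $\ell$ finishes the proof.

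The hard part will be twofold. First, the ratio bound: by \autoref{lemma:fi-monotone} the factor $|f_i|$ is monotonically increasing in the coordinates other than $i$, so $|f_i(\ve{b})\beta_i|\le\rho'$ holds only in a full-dimensional cone around the critical ray; the slack $\rho-\rho'$ must absorb any excess arising from multi-indices lying off this cone, and I expect the uniform bound $|A_{\ve{b}}\ve{\beta}^{\ve{b}}|\le C\rho^{|\ve{b}|}$ --- which follows from absolute convergence of $T(\ve{x})$ at $\ve{x}=\ve{\beta}/\rho$ (strictly inside the convergence domain since $\rho>1-\lambda_{\min}$ and the domain is star-shaped) --- to enter the off-cone bookkeeping. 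Second, injectivity of $\phi$ can fail when several coordinates simultaneously cross the threshold $N_j+\ell$, so a refined tie-breaking rule or an inclusion--exclusion correction will be needed to recover injectivity. The principal technical step is reconciling these two adjustments so that the final bound retains the clean constant $\rho$.
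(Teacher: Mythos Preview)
Your starting point---anchoring the partial degrees along the critical ray from \autoref{lemma:beta-lambda-on-boundary} and controlling the coefficient ratios $f_i$---matches the paper's. But the single-step-plus-injection scheme has a structural obstruction that your two acknowledged ``adjustments'' do not actually resolve.

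The core problem is that for $\ve{b}\in B_\ell$ you are \emph{forced} to step in a direction $i$ with $b_i\ge N_i+\ell$, whereas the ratio bound $|f_i(\ve{b})\beta_i|\le\rho$ holds only when $\ve{b}$ lies on the correct side of the critical ray relative to direction $i$ (by \eqref{eq:psi}, $|\Psi_i|$ is monotone in the adjacent ratios $b_{i\pm1}/b_i$). These two requirements can be incompatible. For instance, the point $(N_1+\ell,\,N_2+\ell-1)$ lies in $B_\ell$ only via coordinate $1$, but as $\ell\to\infty$ its slope $b_2/b_1\to 1$, which need not stay below $b_2^*/b_1^*$; then $|f_1\beta_1|$ can exceed $\rho$, and you have no other admissible direction to step in. The global bound $|A_{\ve{b}}\ve\beta^{\ve{b}}|\le C\rho^{|\ve{b}|}$ you invoke does not mesh with the injection: it yields an upper bound of the form $C'\rho^{\min_i(N_i)+\ell}$ on the tail, but $E(\ve N)$ also tends to $0$ and you have no matching lower bound on it to conclude $E(\ve N+\ell\ve 1)\le\rho^\ell E(\ve N)$. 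Inclusion--exclusion for the injectivity failure introduces terms of the wrong sign and does not help either.

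The paper (written out for $n=3$) abandons the injection entirely. It splits $E(\ve N)$ into two strip sums $S_1,S_2$ and a corner sum $S_3$, and bounds $E(\ve N+\ell\ve 1)$ by analogous pieces $S_1',S_2',S_3',S_4'$. On each strip one can always step in the single ``good'' direction, giving $S_k'\le\mu^\ell S_k$ with $\mu=\sqrt{(1-\lambda_{\min})\rho}<\rho$. For the corner, a zigzag path---at each step choosing direction $1$ or $2$ according to which side of the ray the current point lies on, so that Claim~\ref{claim:term-ratio} always applies---shows that every corner term is at most $\mu^{i+j}$ times the single term $s=|A_{\ve N}\ve\beta^{\ve N}|$; summing gives $S_3'+S_4'\le 2\mu^\ell s/(1-\mu)^2$. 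The overlap you worried about is thus controlled entirely by $s$, and the decisive final step is to enlarge $\ve N$ along the sector (keeping it inside) so that $s/S_1$ becomes as small as one likes; the slack $\rho/\mu=1+\epsilon$ then absorbs the extra factor. This corner-term-versus-strip comparison, together with the freedom to push $\ve N$ far out to kill $s$, is the idea missing from your outline.
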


To simplify the notation, we will prove \autoref{thm:error-any-dim} specifically for $n = 3$, which we restate as \autoref{thm:error-dim-3} below. Note that the proof for general $n$ follows in the exact same manner. 
%The proof of \autoref{thm:error-dim-3} borrows methods used by Horn in \cite{horn1889convergenz} regarding the convergence of the double hypergeometric series. (See \autoref{rk:boundary-cvg}.)

\begin{proposition} \label{thm:error-dim-3}
For any $\rho$ such that $1-\lambda_{\min} < \rho < 1$, there exist partial degrees $N_1$ and $N_2$ such that for any integer $\ell \geq 1$, we have 
\begin{equation*}
    E(N_1+\ell, N_2+\ell) \leq \rho^\ell \, E(N_1, N_2).
\end{equation*}
\end{proposition}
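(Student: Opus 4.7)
The plan is to reduce the theorem to a single-step contraction
\[E(N_1'+1, N_2'+1) \leq \rho\, E(N_1', N_2') \quad \text{for every } N_1' \geq N_1,\; N_2' \geq N_2,\]
from which iterating $\ell$ times yields $E(N_1+\ell, N_2+\ell) \leq \rho^\ell E(N_1, N_2)$.

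By \autoref{lemma:beta-lambda-on-boundary} and \eqref{eq:x-on-boundary}, there exists a direction $\ve{b}^* = (b_1^*, b_2^*) \in \R_+^2$ with $|\beta_i \Psi_i(\ve{b}^*)| = 1 - \lambda_{\min}$ for $i = 1, 2$. Setting $t^* := b_2^*/b_1^*$ and using the explicit formulas $|\Psi_1(\ve{b})| = \sqrt{1+b_2/b_1}$ and $|\Psi_2(\ve{b})| = \sqrt{1+b_1/b_2}$ from \eqref{eq:psi}, one sees that $|\beta_1 \Psi_1(\ve{b})| \leq 1-\lambda_{\min}$ in the region $R_1 := \{b_2 \leq t^* b_1\}$ and $|\beta_2 \Psi_2(\ve{b})| \leq 1-\lambda_{\min}$ in $R_2 := \{b_2 \geq t^* b_1\}$, with equality only along the critical ray $\R_+ \ve{b}^*$. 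Next, fix $r$ with $1-\lambda_{\min} < r < \rho$. Using Gautschi-type bounds on $\Gamma(z+1/2)/\Gamma(z)$ that yield $|f_i(\ve{b})| \to |\Psi_i(\ve{b})|$ uniformly on closed sub-cones of directions, I would choose $N_1, N_2$ large (with $N_2/N_1 \approx t^*$) so that $|\beta_1 f_1(\ve{b})| \leq r$ for $\ve{b} \in R_1$ with $b_1 \geq N_1$, and $|\beta_2 f_2(\ve{b})| \leq r$ for $\ve{b} \in R_2$ with $b_2 \geq N_2$.

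To establish the single-step contraction, I partition the tail $B(N_1'+1, N_2'+1)$ along $R_1, R_2$, and for $\ve{b}$ in the $R_i$-part compare with $\ve{b} - \ve{e}_i \in B(N_1', N_2')$ using the ratio identity
\[|A_{\ve{b}} \ve{\beta}^{\ve{b}}| = |\beta_i\, f_i(\ve{b}-\ve{e}_i)| \cdot |A_{\ve{b}-\ve{e}_i}\ve{\beta}^{\ve{b}-\ve{e}_i}| \leq r\, |A_{\ve{b}-\ve{e}_i}\ve{\beta}^{\ve{b}-\ve{e}_i}|.\]
Summing these term-wise comparisons in each region gives the region-wise contractions, which combine to the desired single-step inequality since $r < \rho$.

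The main obstacle is ensuring that the two shift maps $\ve{b}\mapsto \ve{b}-\ve{e}_1$ on $R_1$ and $\ve{b}\mapsto \ve{b}-\ve{e}_2$ on $R_2$ are injective when combined: a source in $R_1$ near the dividing line $b_2 = t^* b_1$ can collide with a source in $R_2$ on the other side. I would resolve this by organizing the $R_1$-contribution as independent 1D geometric-like tails along fixed rows $b_2 = \text{const}$ and the $R_2$-contribution along fixed columns $b_1 = \text{const}$, which removes cross-region collisions. The axial terms $b_1 = 0$ or $b_2 = 0$ and any residual boundary strip are controlled using the crude estimate $|\beta_i| \leq 1-\lambda_{\min}$ obtained from the Cauchy interlacing argument in the proof of \autoref{lemma:lambdamin}, since on these degenerate directions $|\Psi_i|$ reduces to $1$ and $|\beta_i f_i|\to |\beta_i|<\rho$.
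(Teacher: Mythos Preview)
Your single–step contraction scheme has a real gap at exactly the place you flag. The proposed fix --- ``organize the $R_1$-contribution as 1D tails along rows and the $R_2$-contribution along columns'' --- does not remove the cross-region collisions. For $\ve b\in R_1$ adjacent to the critical ray, the point $\ve c=\ve b-\ve e_1+\ve e_2$ lies in $R_2$, and both $\ve b$ and $\ve c$ map to the common target $\ve b-\ve e_1$; grouping by rows in $R_1$ and columns in $R_2$ does not change this, because the shifted row at height $b_2$ and the shifted column at width $b_1-1$ still meet at $(b_1-1,b_2)$. These double-counted targets form an infinite strip of bounded width along the critical ray, and their sum is \emph{not} a priori dominated by $E(N')$; in fact it is comparable to the geometric tail anchored at the corner term $|A_{N_1',N_2'}\beta_1^{N_1'}\beta_2^{N_2'}|/(1-\mu)$, which is precisely the quantity your argument never controls. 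A second, smaller issue: the inequality you invoke is $|\beta_1 f_1(\ve b-\ve e_1)|\le r$, but your hypothesis places $\ve b$ (not $\ve b-\ve e_1$) in $R_1$, and the shift by $-\ve e_1$ increases $b_2/b_1$ and can push you out of $R_1$. This one is patchable by thickening $R_1$ slightly, since the angle changes by $O(1/b_1)$, but it should be said.

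The paper avoids the injective-shift paradigm entirely. It compares $E(N_1+\ell,N_2+\ell)$ with $E(N_1,N_2)$ directly via the decomposition $E(N_1,N_2)=S_1+S_2+S_3$ into two strips and a corner. The ratio bounds are first obtained on a genuine \emph{sector} $\theta\in[\theta_2,\theta_1]$ around the critical direction, and then extended to half-planes using the monotonicity \autoref{lemma:fi-monotone}; this replaces your Gautschi-type uniformity argument and simultaneously handles the ``$\ve b-\ve e_i$ leaves $R_i$'' issue. The corner overlap --- your collision strip --- is not made to disappear but is bounded term by term against the single corner value $s=|A_{N_1,N_2}\beta_1^{N_1}\beta_2^{N_2}|$, and the proof then shows $s/S_1$ can be made as small as desired by pushing $(N_1,N_2)$ further out inside the sector. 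This absorption of the corner into the strip is the ingredient your proposal is missing.
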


\begin{proof}
For the multiexponent $(b_1, b_2)$, the coefficient $A_{\ve{b}}$ in equation~\eqref{eq:Ab} writes
\begin{equation*}
   A_{b_1,b_2} = \frac{(-2)^{b_1+b_2}}{b_1!\, b_2!}\,\Gamma\left(\frac{1+b_1}{2}\right)\Gamma\left(\frac{1+b_2}{2}\right)\Gamma\left(\frac{1+b_1+b_2}{2}\right).  
\end{equation*}
By \autoref{lemma:beta-lambda-on-boundary} and \eqref{eq:x-on-boundary},  there exist $x_1, x_2 \in \Rr_{+}$ such that 
\begin{equation*}
    \frac{|\beta_1|}{1-\lambda_{\min}} = \frac{1}{|\Psi_1(x_1, x_2)|}\quad \text{ and }\quad  \frac{|\beta_2|}{1-\lambda_{\min}} = \frac{1}{|\Psi_2(x_1, x_2)|}
\end{equation*}
Thus,
\begin{equation}\label{eq:beta-lambda}
    \left|\beta_1 \Psi_1(x_1, x_2)\right| = 1-\lambda_{\min}\quad \text{ and }\quad \left|\beta_2 \Psi_2(x_1, x_2)\right| = 1-\lambda_{\min}
\end{equation}
Since $\Psi_1, \Psi_2$ are rational and homogeneous functions of degree zero by \eqref{eq:psi}, their values only depend on the ratio between $x_1$ and $x_2$. (We note that for general $n>3$, each $\Psi_i$ only depends on at most two consecutive pairwise ratios.) 
We express $(x_1, x_2)$ in polar coordinates $(r_0, \theta_0)$ with $r_0 = \sqrt{x_1^2+x_2^2}$ and $\theta_0 = \arctan(x_2 / x_1)$. It follows from \eqref{eq:psi} that $\left|\Psi_1(r \cos \theta, r\sin \theta)\right| = \sqrt{1+\tan\theta}$, which is increasing on $\theta \in [0,\pi/2)$, and $\left|\Psi_2(r \cos \theta, r\sin \theta)\right| = \sqrt{1+\cot\theta}$, which is decreasing on $\theta \in (0,\pi/2]$.
We obtain from \eqref{eq:beta-lambda} that 
\begin{equation}\label{eq:beta-lim-f}
    \left|\beta_1 \lim_{r\to \infty}f_1(r\cos\theta_0, r\sin\theta_0)\right| 
    = \left|\beta_2 \lim_{r\to \infty}f_2(r\cos\theta_0, r\sin\theta_0)\right|
    = 1-\lambda_{\min}.
\end{equation}
Let $\epsilon$ be a small positive number. Specifically, we set  
\begin{equation}\label{eq:eps-def}
  \epsilon :=   \sqrt{\frac{\rho}{1-\lambda_{\min}}} - 1 > 0. 
\end{equation}
It follows from \eqref{eq:beta-lim-f} that there exists $r_0 > 0$ such that
\begin{align*}
    \forall\; r \geq r_0, \quad \left|\beta_i f_i(r\cos\theta_0, r\sin\theta_0)\right| \leq (1-\lambda_{\min})(1+\epsilon/2) \; \text{ for } i =  1, 2.
\end{align*}

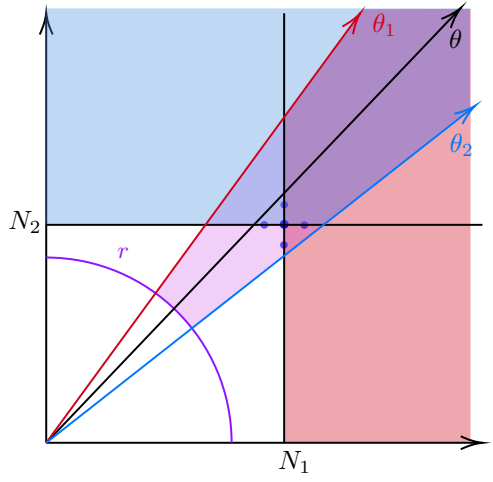
\begin{figure}[tp]
\centering

\tikzset{every picture/.style={line width=0.75pt}} %set default line width to 0.75pt        

\begin{tikzpicture}[x=0.75pt,y=0.75pt,yscale=-1,xscale=1]
%uncomment if require: \path (0,415); %set diagram left start at 0, and has height of 415

%Shape: Polygon [id:ds28354768508182904] 
\draw  [draw opacity=0][fill={rgb, 255:red, 208; green, 2; blue, 27 }  ,fill opacity=0.35 ] (189.83,1.17) -- (244,1) -- (244,139.2) -- (244,220) -- (150,220) -- (150.13,55.25) -- cycle ;
%Straight Lines [id:da08928903940554767] 
\draw    (30,220) -- (248,220) ;
\draw [shift={(250,220)}, rotate = 180] [color={rgb, 255:red, 0; green, 0; blue, 0 }  ][line width=0.75]    (10.93,-3.29) .. controls (6.95,-1.4) and (3.31,-0.3) .. (0,0) .. controls (3.31,0.3) and (6.95,1.4) .. (10.93,3.29)   ;
%Straight Lines [id:da5564893924469259] 
\draw    (30,220) -- (30,5) ;
\draw [shift={(30,3)}, rotate = 90] [color={rgb, 255:red, 0; green, 0; blue, 0 }  ][line width=0.75]    (10.93,-3.29) .. controls (6.95,-1.4) and (3.31,-0.3) .. (0,0) .. controls (3.31,0.3) and (6.95,1.4) .. (10.93,3.29)   ;
%Shape: Circle [id:dp7470555366901204] 
\draw  [color={rgb, 255:red, 0; green, 0; blue, 255 }  ,draw opacity=1 ][fill={rgb, 255:red, 0; green, 0; blue, 255 }  ,fill opacity=1 ] (148.25,109.75) .. controls (148.25,108.78) and (149.03,108) .. (150,108) .. controls (150.97,108) and (151.75,108.78) .. (151.75,109.75) .. controls (151.75,110.72) and (150.97,111.5) .. (150,111.5) .. controls (149.03,111.5) and (148.25,110.72) .. (148.25,109.75) -- cycle ;
%Shape: Circle [id:dp020845475742352626] 
\draw  [color={rgb, 255:red, 0; green, 0; blue, 255 }  ,draw opacity=0.67 ][fill={rgb, 255:red, 0; green, 0; blue, 255 }  ,fill opacity=0.67 ] (158.75,110.14) .. controls (158.75,109.35) and (159.39,108.71) .. (160.18,108.71) .. controls (160.97,108.71) and (161.61,109.35) .. (161.61,110.14) .. controls (161.61,110.93) and (160.97,111.57) .. (160.18,111.57) .. controls (159.39,111.57) and (158.75,110.93) .. (158.75,110.14) -- cycle ;
%Shape: Circle [id:dp6060112762161378] 
\draw  [color={rgb, 255:red, 0; green, 0; blue, 255 }  ,draw opacity=0.66 ][fill={rgb, 255:red, 0; green, 0; blue, 255 }  ,fill opacity=0.58 ] (138.57,110) .. controls (138.57,109.21) and (139.21,108.57) .. (140,108.57) .. controls (140.79,108.57) and (141.43,109.21) .. (141.43,110) .. controls (141.43,110.79) and (140.79,111.43) .. (140,111.43) .. controls (139.21,111.43) and (138.57,110.79) .. (138.57,110) -- cycle ;
%Shape: Circle [id:dp8336504633373785] 
\draw  [color={rgb, 255:red, 0; green, 0; blue, 255 }  ,draw opacity=0.67 ][fill={rgb, 255:red, 0; green, 0; blue, 255 }  ,fill opacity=0.67 ] (148.6,99.84) .. controls (148.6,99.05) and (149.24,98.41) .. (150.02,98.41) .. controls (150.81,98.41) and (151.45,99.05) .. (151.45,99.84) .. controls (151.45,100.62) and (150.81,101.26) .. (150.02,101.26) .. controls (149.24,101.26) and (148.6,100.62) .. (148.6,99.84) -- cycle ;
%Shape: Circle [id:dp42778300865820373] 
\draw  [color={rgb, 255:red, 0; green, 0; blue, 255 }  ,draw opacity=0.67 ][fill={rgb, 255:red, 0; green, 0; blue, 255 }  ,fill opacity=0.67 ] (148.44,120.14) .. controls (148.44,119.35) and (149.08,118.71) .. (149.87,118.71) .. controls (150.66,118.71) and (151.3,119.35) .. (151.3,120.14) .. controls (151.3,120.93) and (150.66,121.57) .. (149.87,121.57) .. controls (149.08,121.57) and (148.44,120.93) .. (148.44,120.14) -- cycle ;
%Shape: Arc [id:dp16704580702535998] 
\draw  [draw opacity=0] (30,126.5) .. controls (30,126.5) and (30,126.5) .. (30,126.5) .. controls (81.64,126.5) and (123.5,168.36) .. (123.5,220) -- (30,220) -- cycle ; \draw  [color={rgb, 255:red, 144; green, 19; blue, 254 }  ,draw opacity=1 ] (30,126.5) .. controls (30,126.5) and (30,126.5) .. (30,126.5) .. controls (81.64,126.5) and (123.5,168.36) .. (123.5,220) ;  
%Shape: Boxed Polygon [id:dp23012062541722922] 
\draw  [draw opacity=0][fill={rgb, 255:red, 189; green, 16; blue, 224 }  ,fill opacity=0.2 ] (189.83,1.17) -- (244,1) -- (244,51.4) -- (103.62,162.63) -- (100.89,158.78) -- (98.22,155.44) -- (94.67,152.11) -- (91.33,149.22) -- (88.22,147) -- (84.87,144.63) -- cycle ;
%Shape: Polygon [id:ds5333203732636134] 
\draw  [draw opacity=0][fill={rgb, 255:red, 74; green, 144; blue, 226 }  ,fill opacity=0.35 ] (243.6,0.75) -- (243.6,51.15) -- (169.6,109.75) -- (150,109.75) -- (29.6,109.75) -- (29.6,0.75) -- cycle ;
%Straight Lines [id:da9562278484142306] 
\draw    (30,110) -- (139.3,110) -- (160.1,110) -- (160.7,110) -- (250,110) ;
%Straight Lines [id:da29326944749012274] 
\draw    (150,3.2) -- (150,97.6) -- (150,100.8) -- (150,108.5) -- (150,111.5) -- (150,220.2) ;
%Straight Lines [id:da5980967348428912] 
\draw    (30,220) -- (87.08,160.05) -- (121.52,123.88) -- (237.82,1.85) ;
\draw [shift={(239.2,0.4)}, rotate = 133.62] [color={rgb, 255:red, 0; green, 0; blue, 0 }  ][line width=0.75]    (10.93,-3.29) .. controls (6.95,-1.4) and (3.31,-0.3) .. (0,0) .. controls (3.31,0.3) and (6.95,1.4) .. (10.93,3.29)   ;
%Straight Lines [id:da0508170119494713] 
\draw [color={rgb, 255:red, 208; green, 2; blue, 27 }  ,draw opacity=1 ]   (30,220) -- (63.56,174.06) -- (90.4,137.3) -- (187.32,4.62) ;
\draw [shift={(188.5,3)}, rotate = 126.14] [color={rgb, 255:red, 208; green, 2; blue, 27 }  ,draw opacity=1 ][line width=0.75]    (10.93,-3.29) .. controls (6.95,-1.4) and (3.31,-0.3) .. (0,0) .. controls (3.31,0.3) and (6.95,1.4) .. (10.93,3.29)   ;
%Straight Lines [id:da8606678258133422] 
\draw [color={rgb, 255:red, 0; green, 119; blue, 255 }  ,draw opacity=1 ]   (30,220) -- (129.52,141.74) -- (244.93,50.99) ;
\draw [shift={(246.5,49.75)}, rotate = 141.82] [color={rgb, 255:red, 0; green, 119; blue, 255 }  ,draw opacity=1 ][line width=0.75]    (10.93,-3.29) .. controls (6.95,-1.4) and (3.31,-0.3) .. (0,0) .. controls (3.31,0.3) and (6.95,1.4) .. (10.93,3.29)   ;

% Text Node
\draw (9.5,101.5) node [anchor=north west][inner sep=0.75pt]   [align=left] {$N_2$};
% Text Node
\draw (145.5,222.75) node [anchor=north west][inner sep=0.75pt]   [align=left] {$N_1$};
% Text Node
\draw (193, 2) node [anchor=north west][inner sep=0.75pt]  [color={rgb, 255:red, 208; green, 2; blue, 27 }  ,opacity=1 ] [align=left] {$\theta_1$};
% Text Node
\draw (232,62.7) node [anchor=north west][inner sep=0.75pt]  [color={rgb, 255:red, 0; green, 118; blue, 252 }  ,opacity=1 ] [align=left] {$\theta_2$};
% Text Node
\draw (231.7,10) node [anchor=north west][inner sep=0.75pt]   [align=left] {$\theta$};
% Text Node
\draw (64.5,120) node [anchor=north west][inner sep=0.75pt]  [font=\small,color={rgb, 255:red, 144; green, 19; blue, 254 }  ,opacity=1 ]  {$r$};
\end{tikzpicture}
\caption{The $(b_1, b_2)$-plane corresponding to the proof of \autoref{thm:error-dim-3}. The purple sector region shows where the ratio bounds in \eqref{eq:ratio-less-mu} hold; The red region and the blue region correspond to \autoref{claim:term-ratio} (1) and (2), respectively.}
\label{fig:sector-region}
\end{figure}

Since $f_1, f_2$ are rational functions, there exist $\theta_1, \theta_2$ with $\theta_2 < \theta_0 < \theta_1$ such that
\begin{align*}
    \forall\; r \geq r_0 \text{ and } \theta\in [\theta_2, \theta_1], \quad \left|\beta_i f_i(r\cos\theta, r\sin\theta)\right| \leq (1-\lambda_{\min})(1+\epsilon) \; \text{ for } i =  1, 2.
\end{align*}
Let $S$ be the sector whose polar coordinates $r, \theta$ satisfy 
\[r>r_0 \quad \text{ and } \quad \theta\in  [\theta_2, \theta_1].\] 
Denote
\begin{equation}\label{eq:mu-def}
 \mu := (1-\lambda_{\min})(1+\epsilon) =\sqrt{(1-\lambda_{\min})\rho} < 1.
\end{equation}
We note that for $(b_1, b_2)\in \R_{+}^2$,
\begin{align*}
    \beta_1 f_1(b_1, b_2) &= \left(A_{b_1+1, b_2}\,\beta_1^{b_1+1}\beta_2^{b_2}\right)/
\left(A_{b_1,b_2}\,\beta_1^{b_1}\beta_2^{b_2}\right);\\
    \beta_2 f_2(b_1, b_2) &= \left(A_{b_1, b_2+1}\,\beta_1^{b_1}\beta_2^{b_2+1}\right)/
\left(A_{b_1,b_2}\,\beta_1^{b_1}\beta_2^{b_2}\right).
\end{align*}
Thus, for any $(b_1, b_2)\in \R_{+}^2$ that lies in the sector $S$,
\begin{equation} \label{eq:ratio-less-mu}
    \frac{\left|A_{b_1+1, b_2}\,\beta_1^{b_1+1}\beta_2^{b_2}\right|}{\left|A_{b_1,b_2}\,\beta_1^{b_1}\beta_2^{b_2}\right|} \leq \mu
    \quad \text{ and } \quad 
    \frac{\left|A_{b_1, b_2+1}\,\beta_1^{b_1}\beta_2^{b_2+1}\right|}{\left|A_{b_1,b_2}\,\beta_1^{b_1}\beta_2^{b_2}\right|} \leq \mu.
\end{equation} 
Let $(N_1, N_2)$ be an integer point in the interior of $S$ such that $(N_1\pm 1, N_2)$ and $(N_1, N_2\pm 1)$ are also inside the sector $S$. Such point $(N_1, N_2)$ exists because the conditions below can always be met by scaling.
\begin{equation}\label{eq:n1-n2}
\begin{split}
  & N_2\cot \theta_1 +1 \leq N_1 \leq N_2\cot \theta_2 -1 \text{ and }\\
  & N_1\tan \theta_2 +1 \leq N_2 \leq N_1\tan \theta_1 -1. 
\end{split}
\end{equation}
We illustrate the parameters and the regions in \autoref{fig:sector-region}.

Using \eqref{eq:ratio-less-mu}--\eqref{eq:n1-n2} together with \autoref{lemma:fi-monotone}, we obtain the following claim.
\begin{claim} \label{claim:term-ratio}
Let $(b_1, b_2)\in \N^2$. 
\begin{enumerate}
    \item If $b_1 \geq N_1$ and $b_2 \leq b_1 \tan \theta_1$, %(i.e.,  $(b_1, b_2)$ lies below the ray with angle $\theta_1$), 
    then
   $\frac{\left|A_{b_1+1, b_2}\,\beta_1^{b_1+1}\beta_2^{b_2}\right|}{\left|A_{b_1,b_2}\,\beta_1^{b_1}\beta_2^{b_2}\right|} \leq \mu.$    
    \item If $b_2 \geq N_2$ and $b_2 \geq b_1 \tan \theta_2$, then 
     $\frac{\left|A_{b_1, b_2+1}\,\beta_1^{b_1}\beta_2^{b_2+1}\right|}{\left|A_{b_1,b_2}\,\beta_1^{b_1}\beta_2^{b_2}\right|} \leq \mu.$    
\end{enumerate}
\end{claim}

In order to study the truncation errors $E(N_1, N_2)$ and $E(N_1+\ell, N_2+\ell)$, we rewrite them as 
\begin{equation}\label{eq:E-s}
    E(N_1, N_2) = S_1 + S_2 + S_3 
\end{equation}
and
\begin{equation}\label{eq:E-s-l}
    E(N_1+ \ell, N_2+\ell)  \leq S_1' + S_2' + S_3' + S_4',
\end{equation}
respectively, where 
\begin{align*}
    S_1 & = \sum_{\substack{b_1\geq N_1\\ 0\leq b_2 < N_2}} \left|A_{b_1, b_2}\beta_1^{b_1}\beta_2^{b_2}\right|; \;
    & S_2 = \sum_{\substack{b_2\geq N_2\\ 0\leq b_1 < N_1}} \left|A_{b_1, b_2}\beta_1^{b_1}\beta_2^{b_2}\right|; \\
    S_3 & = \sum_{\substack{b_1\geq N_1\\ b_2\geq N_2}} \left|A_{b_1, b_2}\beta_1^{b_1}\beta_2^{b_2}\right|,
\end{align*}
and
\begin{align*}
    S_1' & = \sum_{\substack{b_1\geq N_1+\ell\\ 0\leq b_2 < N_2}} \left|A_{b_1, b_2}\beta_1^{b_1}\beta_2^{b_2}\right|; \;
    & S_2'  = \sum_{\substack{b_2\geq N_2+\ell\\ 0\leq b_1 < N_1}} \left|A_{b_1, b_2}\beta_1^{b_1}\beta_2^{b_2}\right|; \\
    S_3' & = \sum_{\substack{b_1\geq N_1\\ b_2\geq N_2+\ell}} \left|A_{b_1, b_2}\beta_1^{b_1}\beta_2^{b_2}\right|; \;
    & S_4' = \sum_{\substack{b_1\geq N_1+\ell\\ b_2\geq N_2}} \left|A_{b_1, b_2}\beta_1^{b_1}\beta_2^{b_2}\right|.
\end{align*}
We view each term $|A_{b_1, b_2}\beta_1^{b_1}\beta_2^{b_2}|$ in a series $S_k$ as corresponding to the point $(b_1, b_2)$ in the first quadrant. See \autoref{fig:E-N-partition}. %for an illustration.

\begin{figure}[tp]
\centering
\begin{minipage}{.48\textwidth}
\tikzset{every picture/.style={line width=0.75pt}} %set default line width to 0.75pt    
\begin{tikzpicture}[x=0.75pt,y=0.75pt,yscale=-0.6,xscale=0.6]
%uncomment if require: \path (0,300); %set diagram left start at 0, and has height of 300
%Shape: Axis 2D [id:dp8460068849625253] 
\draw  (43,259.67) -- (347,259.67)(70.33,10) -- (70.33,286) (340,254.67) -- (347,259.67) -- (340,264.67) (65.33,17) -- (70.33,10) -- (75.33,17)  ;
%Straight Lines [id:da08897938934349336] 
\draw    (209.5,9.75) -- (210.5,260.75) ;
%Straight Lines [id:da4724087373962367] 
\draw    (350.67,139.5) -- (67.87,139.53) ;
%Straight Lines [id:da3588811442916606] 
\draw [color={rgb, 255:red, 208; green, 2; blue, 27 }  ,draw opacity=1 ]   (70.33,259.67) -- (274.73,10.3) ;
\draw [shift={(276,8.75)}, rotate = 489.34] [color={rgb, 255:red, 208; green, 2; blue, 27 }  ,draw opacity=1 ][line width=0.75]    (10.93,-3.29) .. controls (6.95,-1.4) and (3.31,-0.3) .. (0,0) .. controls (3.31,0.3) and (6.95,1.4) .. (10.93,3.29)   ;
%Shape: Polygon [id:ds06282619575296278] 
\draw  [draw opacity=0][fill={rgb, 255:red, 130; green, 251; blue, 114 }  ,fill opacity=0.39 ] (70.2,19.4) -- (197,19.4) -- (197,139) -- (70.2,139) -- cycle ;
%Shape: Polygon [id:ds9348402789825518] 
\draw  [draw opacity=0][fill={rgb, 255:red, 114; green, 251; blue, 232 }  ,fill opacity=0.39 ] (209,149.83) -- (340,149.83) -- (340,259.17) -- (209,260.75) -- cycle ;
%Shape: Polygon [id:ds3723586923462623] 
\draw  [draw opacity=0][fill={rgb, 255:red, 237; green, 158; blue, 233 }  ,fill opacity=0.39 ] (209,18.75) -- (341.5,19.25) -- (340.5,140) -- (209,140) -- cycle ;
% rays
\draw [color={rgb, 255:red, 74; green, 144; blue, 226 }  ,draw opacity=1 ]   (70.33,259.67) -- (342.86,68.4) ;
\draw [shift={(344.5,67.25)}, rotate = 504.94] [color={rgb, 255:red, 74; green, 144; blue, 226 }  ,draw opacity=1 ][line width=0.75]    (10.93,-3.29) .. controls (6.95,-1.4) and (3.31,-0.3) .. (0,0) .. controls (3.31,0.3) and (6.95,1.4) .. (10.93,3.29)   ;
%Shape: Ellipse [id:dp3897711514919231] 
\draw  [color={rgb, 255:red, 63; green, 13; blue, 239 }  ,draw opacity=1 ][fill={rgb, 255:red, 49; green, 17; blue, 230 }  ,fill opacity=1 ] (207.58,138.87) .. controls (207.58,137.79) and (208.52,136.92) .. (209.67,136.92) .. controls (210.82,136.92) and (211.75,137.79) .. (211.75,138.87) .. controls (211.75,139.96) and (210.82,140.83) .. (209.67,140.83) .. controls (208.52,140.83) and (207.58,139.96) .. (207.58,138.87) -- cycle ;

% Text Node
%\draw (59.33,255.73) node [anchor=north west][inner sep=0.75pt]    {$o$};
% Text Node
\draw (201.33,265.73) node [anchor=north west][inner sep=0.75pt]    {$N_{1}$}; 
% Text Node
\draw (33.5,128.07) node [anchor=north west][inner sep=0.75pt]    {$N_{2}$};
% Text Node
\draw (278,3.65) node [anchor=north west][inner sep=0.75pt]  [color={rgb, 255:red, 208; green, 2; blue, 27 }  ,opacity=1 ]  {$\theta_1$};
% Text Node
\draw (352.5,51.9) node [anchor=north west][inner sep=0.75pt]  [color={rgb, 255:red, 245; green, 166; blue, 35 }  ,opacity=1 ]  {$\textcolor[rgb]{0.29,0.56,0.89}{\theta_2}$};
% Text Node
\draw (210.39,120.18) node [anchor=north west][inner sep=0.75pt]  [color={rgb, 255:red, 82; green, 10; blue, 227 }  ,opacity=1 ]  {};
% Text Node
\draw (128.37,52.7) node [anchor=north west][inner sep=0.75pt]  [color={rgb, 255:red, 208; green, 2; blue, 27 }  ,opacity=1 ]  {$\textcolor[rgb]{0.25,0.46,0.02}{S}\textcolor[rgb]{0.25,0.46,0.02}{_{2}}\textcolor[rgb]{0.25,0.46,0.02}{\ }$};
% Text Node
\draw (277.67,189.9) node [anchor=north west][inner sep=0.75pt]  [color={rgb, 255:red, 208; green, 2; blue, 27 }  ,opacity=1 ]  {$\textcolor[rgb]{0,0.86,0.74}{S}\textcolor[rgb]{0,0.86,0.74}{_{1}}$};
% Text Node
\draw (279.67,52.4) node [anchor=north west][inner sep=0.75pt]  [color={rgb, 255:red, 208; green, 2; blue, 27 }  ,opacity=1 ]  {$\textcolor[rgb]{0.85,0.33,0.77}{S}\textcolor[rgb]{0.85,0.33,0.77}{_{3}}$};
% Text Node
\draw (210.39,120.18) node [anchor=north west][inner sep=0.75pt]  [color={rgb, 255:red, 82; green, 10; blue, 227 }  ,opacity=1 ]  {$s$};
\end{tikzpicture}
\end{minipage}
\begin{minipage}{.48\textwidth}
\tikzset{every picture/.style={line width=0.75pt}} %set default line width to 0.75pt    
\begin{tikzpicture}[x=0.75pt,y=0.75pt,yscale=-0.6,xscale=0.6]
%uncomment if require: \path (0,300); %set diagram left start at 0, and has height of 300
%Shape: Axis 2D [id:dp8460068849625253] 
\draw  (43,259.67) -- (347,259.67)(70.33,10) -- (70.33,286) (340,254.67) -- (347,259.67) -- (340,264.67) (65.33,17) -- (70.33,10) -- (75.33,17)  ;
%Straight Lines [id:da08897938934349336] 
\draw    (209.5,9.75) -- (210.5,260.75) ;
%Straight Lines [id:da4724087373962367] 
\draw    (350.67,139.5) -- (67.87,139.53) ;
%Straight Lines [id:da3588811442916606] 
\draw [color={rgb, 255:red, 208; green, 2; blue, 27 }  ,draw opacity=1 ]   (70.33,259.67) -- (274.73,10.3) ;
\draw [shift={(276,8.75)}, rotate = 489.34] [color={rgb, 255:red, 208; green, 2; blue, 27 }  ,draw opacity=1 ][line width=0.75]    (10.93,-3.29) .. controls (6.95,-1.4) and (3.31,-0.3) .. (0,0) .. controls (3.31,0.3) and (6.95,1.4) .. (10.93,3.29)   ;
%Shape: Polygon [id:ds06282619575296278] 
\draw  [draw opacity=0][fill={rgb, 255:red, 130; green, 251; blue, 114 }  ,fill opacity=0.39 ] (70.2,19.4) -- (197,20.17) -- (196.33,79.5) -- (70.87,79.53) -- cycle ;
%Shape: Polygon [id:ds9348402789825518] 
\draw  [draw opacity=0][fill={rgb, 255:red, 114; green, 251; blue, 232 }  ,fill opacity=0.39 ] (270.33,149.83) -- (350.67,149.83) -- (349.67,259.17) -- (270.5,260.75) -- cycle ;
%Shape: Polygon [id:ds9172854524069174] 
\draw  [draw opacity=0][fill={rgb, 255:red, 208; green, 2; blue, 27 }  ,fill opacity=0.32 ] (209.8,19.8) -- (351,19) -- (350,78) -- (209,78.75) -- cycle ;
%Straight Lines [id:da3689381731004908] 
\draw    (350,78) -- (70.87,79.53) ;
%Straight Lines [id:da8257754795416596] 
\draw    (269.5,9.75) -- (270.5,260.75) ;
%Shape: Polygon [id:ds340141105530932] 
\draw  [draw opacity=0][fill={rgb, 255:red, 248; green, 231; blue, 28 }  ,fill opacity=0.54 ] (269.2,19.4) -- (350.4,18.2) -- (350.67,139.5) -- (270.67,139.83) -- cycle ;
%Straight Lines [id:da8888216043763069] 
\draw [color={rgb, 255:red, 74; green, 144; blue, 226 }  ,draw opacity=1 ]   (70.33,259.67) -- (342.86,68.4) ;
\draw [shift={(344.5,67.25)}, rotate = 504.94] [color={rgb, 255:red, 74; green, 144; blue, 226 }  ,draw opacity=1 ][line width=0.75]    (10.93,-3.29) .. controls (6.95,-1.4) and (3.31,-0.3) .. (0,0) .. controls (3.31,0.3) and (6.95,1.4) .. (10.93,3.29)   ;
%Shape: Ellipse [id:dp3897711514919231] 
\draw  [color={rgb, 255:red, 63; green, 13; blue, 239 }  ,draw opacity=1 ][fill={rgb, 255:red, 49; green, 17; blue, 230 }  ,fill opacity=1 ] (207.58,138.87) .. controls (207.58,137.79) and (208.52,136.92) .. (209.67,136.92) .. controls (210.82,136.92) and (211.75,137.79) .. (211.75,138.87) .. controls (211.75,139.96) and (210.82,140.83) .. (209.67,140.83) .. controls (208.52,140.83) and (207.58,139.96) .. (207.58,138.87) -- cycle ;

% Text Node
%\draw (59.33,255.73) node [anchor=north west][inner sep=0.75pt]    {$o$};
% Text Node
\draw (201.33,265.73) node [anchor=north west][inner sep=0.75pt]    {$N_{1}$}; 
% Text Node
\draw (33.5,128.07) node [anchor=north west][inner sep=0.75pt]    {$N_{2}$};
% Text Node
\draw (278,3.65) node [anchor=north west][inner sep=0.75pt]  [color={rgb, 255:red, 208; green, 2; blue, 27 }  ,opacity=1 ]  {$\theta_1$};
% Text Node
\draw (352.5,51.9) node [anchor=north west][inner sep=0.75pt]  [color={rgb, 255:red, 245; green, 166; blue, 35 }  ,opacity=1 ]  {$\textcolor[rgb]{0.29,0.56,0.89}{\theta_2}$};
% Text Node
\draw (112.7,41.03) node [anchor=north west][inner sep=0.75pt]  [color={rgb, 255:red, 208; green, 2; blue, 27 }  ,opacity=1 ]  {$\textcolor[rgb]{0.25,0.46,0.02}{S}\textcolor[rgb]{0.25,0.46,0.02}{_{2}}\textcolor[rgb]{0.25,0.46,0.02}{'\ }$};
% Text Node
\draw (300.67,194.9) node [anchor=north west][inner sep=0.75pt]  [color={rgb, 255:red, 208; green, 2; blue, 27 }  ,opacity=1 ]  {$\textcolor[rgb]{0,0.86,0.74}{S}\textcolor[rgb]{0,0.86,0.74}{_{1}}\textcolor[rgb]{0,0.86,0.74}{'}$};
% Text Node
\draw (223.67,26.4) node [anchor=north west][inner sep=0.75pt]  [color={rgb, 255:red, 208; green, 2; blue, 27 }  ,opacity=1 ]  {$\textcolor[rgb]{0.33,0.04,0.07}{S_{3} '}$};
% Text Node
\draw (210.39,120.18) node [anchor=north west][inner sep=0.75pt]  [color={rgb, 255:red, 82; green, 10; blue, 227 }  ,opacity=1 ]  {};
% Text Node
\draw (0,68.07) node [anchor=north west][inner sep=0.75pt]    {$N_{2}+\ell $};
% Text Node
\draw (258,266.07) node [anchor=north west][inner sep=0.75pt]    {$N_{1}+\ell $};
% Text Node
\draw (311.37,103.7) node [anchor=north west][inner sep=0.75pt]  [color={rgb, 255:red, 119; green, 111; blue, 11 }  ,opacity=1 ]  {$\textcolor[rgb]{0.42,0.39,0.04}{S_{4} '}\textcolor[rgb]{0.12,0.44,0.81}{\ }$};
% Text Node
\draw (210.39,120.18) node [anchor=north west][inner sep=0.75pt]  [color={rgb, 255:red, 82; green, 10; blue, 227 }  ,opacity=1 ]  {$s$};
\end{tikzpicture}
\end{minipage}
\caption{Planes of multiexponents $(b_1, b_2)\in \N^2$ for analysing the series truncation errors $E(N_1, N_2)$ (left) and $E(N_1+\ell, N_2+\ell)$ (right). The color of a point $(b_1, b_2)$ indicates which series $S_k$ the term $|A_{b_1, b_2}\beta_1^{b_1}\beta_2^{b_2}|$ belongs to in equations~\eqref{eq:E-s} and \eqref{eq:E-s-l}.}
\label{fig:E-N-partition}
\end{figure}
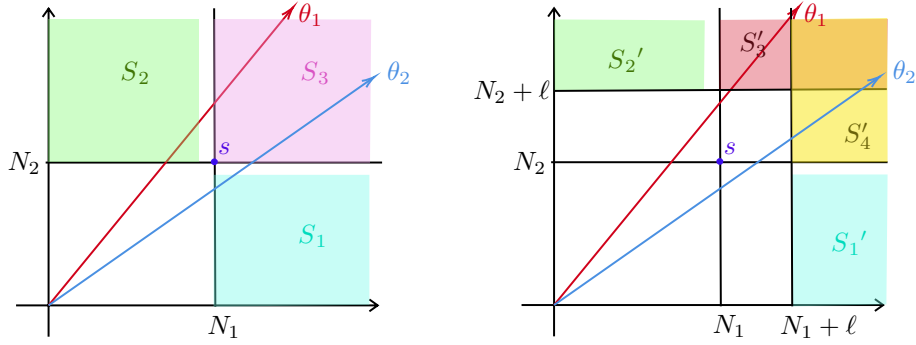

We first consider $S_1$ and $S_1'$. Let $(b_1, b_2)\in \N^2$ such that $b_1 = N_1 + i$ for some $i\geq 0$ and $b_2 \leq N_2$. We compare the term corresponding to  $(b_1, b_2)$ with the term corresponding to $(N_1, b_2)$.
Using \autoref{claim:term-ratio}--(1), we have that 
\begin{equation*} %\label{eq:ratio-compare-mu-i}
    \left|\frac{A_{N_1+i, b_2}\beta_1^{N_1+i}\beta_2^{b_2}}{A_{N_1,b_2}\beta_1^{N_1}\beta_2^{b_2}}\right| = \prod_{j=0}^{i-1} {\left|\frac{A_{N_1+j+1, b_2}\beta_1^{N_1+j+1}\beta_2^{b_2}}{A_{N_1+j,b_2}\beta_1^{N_1+j}\beta_2^{b_2}}\right|} \leq \mu^i.
\end{equation*}
In particular, the above inequality holds for $i=\ell$ and any $0\leq b_2 \leq N_2-1$.
This implies that 
\begin{equation}\label{eq:s1-bound}
    S_1' \leq \mu^\ell S_1.
\end{equation}
Similarly, using \autoref{claim:term-ratio}--(2), we obtain that 
\begin{equation}\label{eq:s2-bound}
    S_2' \leq \mu^\ell S_2.
\end{equation}

Next, we compare a term corresponding to $(b_1, b_2)$ in $S_3$ to the term $s$ corresponding to $(N_1, N_2)$, which is defined by 
\begin{equation*}
    s := \left|A_{N_1, N_2}\beta_1^{N_1}\beta_2^{N_2}\right|.
\end{equation*}
\begin{claim}\label{claim:term-ratio-s3}
   Let $b_1 = N_1+i$ and $b_2=N_2+j$, where $i,j$ are non-negative integers. Then 
   \begin{equation*}
       \frac{\left|A_{b_1, b_2}\beta_1^{b_1}\beta_2^{b_2}\right|}{s}\leq \mu^{i+j}.
   \end{equation*}
\end{claim}
\begin{proof}
   Depending on whether the point $(b_1, b_2)$ lies below or above the ray from the origin through the point $(N_1, N_2)$, we reduce either $b_1$ or $b_2$ by $1$, as follows. By the geometric conditions in \eqref{eq:n1-n2}, if $(b_1, b_2)$ is below the ray, then $b_1-1$ lies in the sector $S$, and if $(b_1, b_2)$ is above the ray, then $b_2-1$ lies in the sector $S$. In either case, we can apply \autoref{claim:term-ratio} to obtain that the ratio between the terms corresponding to $(b_1, b_2)$ and $(b_1', b_2')=(b_1, b_2-1)\text{ or }(b_1-1, b_2)$ is at most $\mu$. Repeat this process until $(b_1', b_2')=(N_1, N_2)$. By multiplying the sequences of ratios obtained in the process, we have the desired inequality.
\end{proof}

In particular, \autoref{claim:term-ratio-s3} provides upper bounds of $S_3'$ and $S_4'$ in terms of $s$.
\begin{equation}\label{eq:s34p-bound}
\begin{split}
    S_3' \leq s \sum_{i=0}^{\infty} \sum_{j=\ell}^{\infty} \mu^{i+j} = \frac{\mu^{\ell} s}{(1-\mu)^2}, \\
    S_4' \leq s \sum_{i=\ell}^{\infty} \sum_{j=0}^{\infty} \mu^{i+j} = \frac{\mu^{\ell} s}{(1-\mu)^2}.
\end{split}
\end{equation}
\begin{claim}
    For very large $N_1, N_2$, we may assume that $s / S_1$ is sufficiently small.
\end{claim}
\begin{proof}
    Let $(N^\ast_1, N^\ast_2)\in \N^2$ be a point in the sector $S$ such that $N^\ast_1\geq N_1, N^\ast_2= N_2+k$ and $k := \floor{N^\ast_2 - N^\ast_1 \tan\theta_2}$ is a large integer. Define $s^\ast$ to be the term corresponding to the integer point $(N^\ast_1, N^\ast_2)$, and define the series $S^\ast_1$ accordingly. Notice that we can lower bound $S^\ast_1$ by the single term $t$ inside it, where $t := \left|A_{N^\ast_1, N_2}\beta_1^{N^\ast_1}\beta_2^{N_2}\right|$ corresponding to the point $(N^\ast_1, N_2)$. Since this point lies in $S$ and $N_2 = N^\ast_2-k$, \autoref{claim:term-ratio}--(2) implies that $s^\ast / t \leq \mu^k$. Therefore, $s^\ast / S^\ast_1 \leq \mu^k$, which is sufficiently small for $k$ large enough.
\end{proof}
In particular, we take large $N_1, N_2$ such that
$s / S_1 \leq \tfrac{1}{2}\epsilon (1-\mu)^2$,
or equivalently,
\begin{equation}\label{eq:s-eps-s1}
    \frac{2s}{(1-\mu)^2} \leq \epsilon S_1 .
\end{equation}
%\tred{Don't have $S_3'\leq S_3 \mu^\ell$  and $S_4'\leq S_3 \mu^\ell$.}
By combining \eqref{eq:E-s}--\eqref{eq:s-eps-s1}, %\eqref{eq:E-s}, \eqref{eq:E-s-l}, \eqref{eq:s1-bound}, \eqref{eq:s2-bound}, \eqref{eq:s34p-bound} and \eqref{eq:s-eps-s1}, 
we obtain that
\begin{align*}
     E(N_1+ \ell, N_2+\ell) 
    & \leq S_1' + S_2' + S_3' + S_4' \leq \mu^\ell\left(S_1+S_2+\frac{2s}{(1-\mu)^2}\right)\\
    & \leq \mu^\ell\left(S_1+S_2+\epsilon S_1\right) \leq \mu^\ell (1+\epsilon) (S_1+S_2+S_3)\\
    & \leq \mu^\ell (1+\epsilon)^\ell (S_1+S_2+S_3) \\
    & = (\mu(1+\epsilon))^\ell E(N_1, N_2).
\end{align*}
Finally, according to the definitions~\eqref{eq:eps-def} and \eqref{eq:mu-def}, 
\[\mu(1+\epsilon) = \sqrt{(1-\lambda_{\min})\rho} \cdot \sqrt{\frac{\rho}{1-\lambda_{\min}}} =\rho.\]
Therefore, the desired inequality $E(N_1+ \ell, N_2+\ell)\leq \rho^{\ell} E(N_1, N_2)$ holds.
\end{proof}

\clearpage

%{\small
\providecommand\ISBN{ISBN }
\bibliographystyle{amsabbrvurl}
\bibliography{ref.bib}
%}

\end{document}